\def\E{\mathbb E}
\newtheorem{lem}{Lemma}
\newtheorem{thm}{Theorem}
\newtheorem{thmletter}{Theorem}
\newtheorem{prop}[thm]{Proposition}
\theoremstyle{definition}
\newtheorem*{remark}{Remark}
\xpatchcmd{\proof}{\itshape}{\normalfont\proofnameformat}{}{}
\newcommand{\proofnameformat}{}
\begin{document}

\renewcommand{\proofnameformat}{\bfseries}

\begin{center}
{\large\textbf{Equidistribution of random walks on compact groups II. The Wasserstein metric}}

\vspace{5mm}

\textbf{Bence Borda}

{\footnotesize Graz University of Technology

Steyrergasse 30, 8010 Graz, Austria

Email: \texttt{borda@math.tugraz.at}}

\vspace{5mm}

{\footnotesize \textbf{Keywords:} ergodic theorem, empirical distribution, central limit theorem, law of the iterated logarithm, Berry--Esseen inequality, simultaneous Diophantine approximation}

{\footnotesize \textbf{Mathematics Subject Classification (2010):} 60G50, 60B15}
\end{center}

\vspace{5mm}

\begin{abstract}
We consider a random walk $S_k$ with i.i.d.\ steps on a compact group equipped with a bi-invariant metric. We prove quantitative ergodic theorems for the sum $\sum_{k=1}^N f(S_k)$ with H\"older continuous test functions $f$, including the central limit theorem, the law of the iterated logarithm and an almost sure approximation by a Wiener process, provided the distribution of $S_k$ converges to the Haar measure in the $p$-Wasserstein metric fast enough. As an example we construct discrete random walks on an irrational lattice on the torus $\mathbb{R}^d/\mathbb{Z}^d$, and find their precise rate of convergence to uniformity in the $p$-Wasserstein metric. The proof uses a new Berry--Esseen type inequality for the $p$-Wasserstein metric on the torus, and the simultaneous Diophantine approximation properties of the lattice. These results complement the first part of this paper on random walks with an absolutely continuous component and quantitative ergodic theorems for Borel measurable test functions.
\end{abstract}

\section{Introduction}

Consider a compact Hausdorff group $G$ with normalized Haar measure $\mu$, and a random walk $S_k= \prod_{j=1}^k X_j =X_1 X_2 \cdots X_k$ on $G$, where $X_1, X_2, \dots$ are independent, identically distributed (i.i.d.) $G$-valued random variables. Assuming that the distribution of $X_1$ is a regular Borel probability measure $\nu$, the distribution of $S_k$ is $\nu^{*k}$, the $k$-fold convolution of $\nu$. We say that $\nu$ is \textit{adapted} if its support is not contained in any proper closed subgroup of $G$; further, $\nu$ is \textit{strictly aperiodic} if its support is not contained in a coset of any proper closed normal subgroup of $G$.

The study of such random walks is a classical topic; in the first part of this paper \cite{BO} we gave a brief overview of the early history. The most fundamental result is
\begin{thmletter}[Stromberg \cite{ST}]\label{Stromberg}
Let $\nu$ be a regular Borel probability measure on a compact Hausdorff group $G$. Then $\nu^{*k}$ converges weakly to the Haar measure $\mu$ as $k \to \infty$ if and only if $\nu$ is adapted and strictly aperiodic.
\end{thmletter}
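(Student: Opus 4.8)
The plan is to translate weak convergence into Fourier-analytic language via the Peter--Weyl theorem and then to study the spectral radii of the operators $\widehat{\nu}(\pi)=\int_G\pi(x)\,d\nu(x)$ as $\pi$ ranges over the irreducible unitary representations of $G$, all of which are finite-dimensional. Since the matrix coefficients of the irreducible representations together with the constants span a uniformly dense subspace of $C(G)$, and since $\widehat{\nu^{*k}}(\pi)=\widehat{\nu}(\pi)^k$ while $\widehat{\mu}(\pi)$ equals the identity operator for the trivial representation and $0$ otherwise, the assertion that $\nu^{*k}\to\mu$ weakly is equivalent to $\widehat{\nu}(\pi)^k\to 0$ for every nontrivial irreducible $\pi$. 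As $\pi$ is unitary, $\|\widehat{\nu}(\pi)\|\le 1$, so the spectral radius of $\widehat{\nu}(\pi)$ is at most $1$; and for a finite square matrix $A$ with $\|A\|\le 1$ one has $A^k\to 0$ if and only if $A$ has no eigenvalue on the unit circle. Thus the whole theorem reduces to deciding, for each nontrivial irreducible $\pi$, whether $\widehat{\nu}(\pi)$ has a unimodular eigenvalue.

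For the necessity direction no harmonic analysis is needed. If $\nu$ is not adapted, then $\mathrm{supp}\,\nu$, hence $\mathrm{supp}\,\nu^{*k}$ for every $k$, lies in a proper closed subgroup $H$; a nonempty open subset of a compact group has positive Haar measure, so $\mu(H)<1$, and the portmanteau inequality $\limsup_k\nu^{*k}(H)\le\mu(H)$ contradicts $\nu^{*k}(H)=1$. If $\nu$ is not strictly aperiodic, then $\mathrm{supp}\,\nu$ lies in a coset of a proper closed normal subgroup $N$; applying the quotient homomorphism $q\colon G\to G/N$ collapses $\nu$ to a point mass $\delta_{\bar g}$, so $q_*(\nu^{*k})=\delta_{\bar g}^{\,*k}=\delta_{\bar g^{\,k}}$, and weak convergence would force $\delta_{\bar g^{\,k}}\to q_*\mu$, the Haar measure of the nontrivial compact group $G/N$. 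Testing against a nonconstant $h\in C(G/N)$ normalized so that $\int h\,dq_*\mu=0$, and then against $h^2$, gives $h(\bar g^{\,k})\to 0$ yet $h(\bar g^{\,k})^2\to\int h^2\,dq_*\mu>0$, a contradiction.

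For sufficiency, assume $\nu$ is adapted and strictly aperiodic and suppose, for contradiction, that for some nontrivial irreducible $\pi$ acting on a Hilbert space $V$ the operator $\widehat{\nu}(\pi)$ has an eigenvalue $\lambda$ with $|\lambda|=1$ and unit eigenvector $v$. From $\lambda=\langle\widehat{\nu}(\pi)v,v\rangle=\int_G\langle\pi(x)v,v\rangle\,d\nu(x)$ together with $|\langle\pi(x)v,v\rangle|\le 1$, the equality case of the triangle inequality yields $\langle\pi(x)v,v\rangle=\lambda$ for $\nu$-almost every $x$, and the equality case of Cauchy--Schwarz then upgrades this to $\pi(x)v=\lambda v$ for $\nu$-almost every $x$, hence for every $x\in\mathrm{supp}\,\nu$ by continuity. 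Now $\Gamma=\{x\in G:\pi(x)(\mathbb{C}v)=\mathbb{C}v\}$ is a closed subgroup containing $\mathrm{supp}\,\nu$, so adaptedness forces $\Gamma=G$; hence $\mathbb{C}v$ is $\pi$-invariant, and irreducibility forces $\dim V=1$. Thus $\pi$ is a nontrivial character $c\colon G\to S^1$ that is identically $\lambda$ on $\mathrm{supp}\,\nu$, so $\mathrm{supp}\,\nu$ lies in a coset of the proper closed normal subgroup $\ker c$ --- contradicting strict aperiodicity. Therefore no nontrivial irreducible $\pi$ has a unimodular eigenvalue, and $\nu^{*k}\to\mu$ weakly.

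I expect this last rigidity step to be the crux: one must extract from the bare existence of a unimodular eigenvalue first a one-dimensional subrepresentation --- which is exactly where adaptedness is used --- and only then the coset constraint that strict aperiodicity excludes, so the two hypotheses enter in precisely these two sub-steps. (An alternative would pass to the averaged convolutions $\frac1N\sum_{k\le N}\nu^{*k}$, identify their weak limit points with idempotent probability measures and invoke the classification of the latter on compact groups, but upgrading averaged convergence to genuine convergence still requires the aperiodicity input, so the Fourier route seems the most economical.) The background facts I am assuming --- the Peter--Weyl theorem, finite-dimensionality of irreducible unitary representations of a compact group, existence of normalized Haar measure, and positivity of Haar measure on nonempty open sets --- are all classical.
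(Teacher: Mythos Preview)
The paper does not prove Theorem~A at all; it is quoted as a classical background result with a citation to Stromberg~\cite{ST}, so there is no ``paper's own proof'' to compare against. That said, your argument is correct and is essentially the standard Fourier-analytic proof of Stromberg's theorem: reduce via Peter--Weyl to the vanishing of $\widehat{\nu}(\pi)^k$ for every nontrivial irreducible $\pi$, observe that for a contraction on a finite-dimensional space this is equivalent to the absence of unimodular eigenvalues, and then use the equality cases in the triangle and Cauchy--Schwarz inequalities to show that a unimodular eigenvalue forces $\mathrm{supp}\,\nu$ into a coset of a proper closed normal subgroup.

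A couple of minor remarks. In the necessity direction, your aperiodicity argument via $h$ and $h^2$ is fine; one could also just note that a sequence of Dirac masses on a nontrivial compact group cannot converge weakly to Haar measure since any weak limit of Dirac masses is again a Dirac mass. In the sufficiency direction, the step ``$\Gamma=\{x:\pi(x)(\mathbb{C}v)=\mathbb{C}v\}$ is a closed subgroup containing $\mathrm{supp}\,\nu$'' is correct but you might want to make explicit that $\Gamma$ is closed because it is the preimage of the stabilizer of a line under the continuous map $x\mapsto\pi(x)$, and that it is a subgroup because stabilizers are. Finally, the passage from ``$\pi(x)v=\lambda v$ for $\nu$-a.e.\ $x$'' to ``for all $x\in\mathrm{supp}\,\nu$'' uses that $\nu$ is regular (so the support carries full measure) together with continuity of $x\mapsto\pi(x)v$; both are available here, but it is worth saying so since the theorem is stated for a general compact Hausdorff group without metrizability assumptions.
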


Throughout the paper we shall work under the technical assumption that $G$ is second countable. This will ensure that $G$ is metrizable; in fact, every second countable compact Hausdorff group is metrizable by a bi-invariant metric $d$, that is, by a metric $d$ satisfying $d(ax,ay)=d(xa,ya)=d(x,y)$ for all $x,y,a \in G$ \cite[p.\ 71]{HR}. Recall that the Haar measure $\mu$ is also bi-invariant; that is, $\mu(aB)=\mu(Ba)=\mu(B)$ for all Borel sets $B \subseteq G$ and $a \in G$. Second countability also implies that every Borel probability measure on $G$ is regular.

The rate of convergence of $\nu^{*k}$ to the Haar measure $\mu$ in various probability metrics has been extensively studied, see Diaconis \cite{DI}. In contrast, here we consider the equidistribution properties of the random walk $S_k$; that is, we study sums of the form $\sum_{k=1}^N f(S_k)$ with various test functions $f: G \to \mathbb{R}$. Formally speaking, we consider the empirical distribution of the points $S_1, S_2, \dots, S_N$. This problem is often more difficult, as it is related to the joint distribution of the sequence $(S_1, S_2, \dots)$ in $G^{\mathbb{N}}$ instead of the marginals $\nu^{*k}$. Our starting point is
\begin{thmletter}[Berger--Evans \cite{BE}]\label{Berger} Let $G$ be a second countable compact Hausdorff group. Let $X_1, X_2, \dots$ be i.i.d.\ $G$-valued random variables, and set $S_k=\prod_{j=1}^k X_j$. Then
\begin{equation}\label{equidistribution}
\frac{1}{N} \sum_{k=1}^N f(S_k) \to \int_G f \, \mathrm{d}\mu \qquad \mathrm{a.s.}
\end{equation}
for all continuous functions $f:G \to \mathbb{R}$ if and only if the distribution of $X_1$ is adapted.
\end{thmletter}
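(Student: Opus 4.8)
The statement is an equivalence. The forward direction—equidistribution for every continuous $f$ implies that the law $\nu$ of $X_1$ is adapted—I would prove by contraposition. If $\nu$ is not adapted, then $\mathrm{supp}\,\nu$ is contained in a proper closed subgroup $H\subsetneq G$, so $S_k=X_1\cdots X_k\in H$ for every $k$, almost surely. Since $G$ is compact Hausdorff (hence normal) and the Haar measure is positive on every nonempty open set, Urysohn's lemma yields a continuous $f\colon G\to[0,1]$ with $f\equiv1$ on $H$ and $\int_G f\,\mathrm d\mu<1$; then $\frac1N\sum_{k=1}^N f(S_k)=1$ for all $N$ almost surely, so \eqref{equidistribution} fails for this $f$.

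For the converse, assume $\nu$ is adapted; I would establish \eqref{equidistribution} by harmonic analysis on $G$. As $G$ is compact and second countable, it has only countably many equivalence classes of irreducible unitary representations, each finite-dimensional, and by the Peter--Weyl theorem the matrix coefficients $\pi_{ij}$ together with the constant $1$ span a uniformly dense subspace of $C(G)$, with $\int_G\pi_{ij}\,\mathrm d\mu=0$ whenever $\pi$ is non-trivial. Thus it suffices to prove that $\frac1N\sum_{k=1}^N\pi_{ij}(S_k)\to0$ almost surely for each non-trivial irreducible $\pi\colon G\to U(d)$ and each $i,j$; a standard $\varepsilon/3$ argument, using $\|\pi_{ij}\|_\infty\le1$ and a countable uniformly dense subset of the span, then upgrades this to \eqref{equidistribution} for all $f\in C(G)$ off a single null event. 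Fix such a $\pi$, set $M=\E[\pi(X_1)]$, which is a contraction, and note $\E[\pi(S_k)]=M^k$.

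Either $M$ has an eigenvalue $\lambda$ on the unit circle or it does not. In the first case, with a unit eigenvector $v$, equality throughout $\|Mv\|=\|\E[\pi(X_1)v]\|\le\E\|\pi(X_1)v\|=\|v\|$ forces $\pi(X_1)v=\lambda v$ almost surely by strict convexity of the Hilbert norm; hence $\mathrm{supp}\,\nu$ lies in the closed stabiliser of the line $\mathbb{C}v$, which equals $G$ since $\nu$ is adapted, and irreducibility forces $d=1$. So $\pi$ is a non-trivial character $\chi$ with $\chi(X_1)=\lambda$ a.s., and necessarily $\lambda\ne1$ (else $\mathrm{supp}\,\nu\subseteq\ker\chi$, a proper subgroup); then $\chi(S_k)=\lambda^k$ a.s.\ and $\frac1N\sum_{k=1}^N\lambda^k\to0$. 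In the second case the spectral radius of $M$ is $<1$, so $\|M^n\|\le C\rho^n$ for some $\rho<1$ by Gelfand's formula, and I would bound the second moment: writing $S_k=S_j\cdot(X_{j+1}\cdots X_k)$ for $j\le k$ with independent factors, the second distributed as $S_{k-j}$, one gets $|\E[\pi_{ab}(S_j)\overline{\pi_{ab}(S_k)}]|\le d\|M^{k-j}\|\le Cd\rho^{k-j}$, hence $\E\big|\sum_{k=1}^N\pi_{ab}(S_k)\big|^2\le\sum_{j,k\le N}Cd\rho^{|j-k|}=O(N)$. Along $N_m=m^2$ the bound $\sum_m\E\big|\frac1{N_m}\sum_{k\le N_m}\pi_{ab}(S_k)\big|^2<\infty$ gives $\frac1{N_m}\sum_{k\le N_m}\pi_{ab}(S_k)\to0$ a.s., and for $N_m\le N<N_{m+1}$ the difference from the $N_m$-average is $O((N_{m+1}-N_m)/N_m)=O(1/m)$, which closes the gap to all $N$.

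The part I expect to be delicate is the role of adaptedness around $M=\E[\pi(X_1)]$: realising that adaptedness is exactly what forbids unimodular eigenvalues of $M$ except in the ``periodic character'' situation, and then dispatching that situation by Cesàro summation of the geometric sequence $\lambda^k$. A purely ergodic-theoretic proof would face a different obstacle: the skew product $(g,\omega)\mapsto(gX_1(\omega),\theta\omega)$ on $G\times G^{\mathbb N}$ (with $\theta$ the shift) preserves $\mu\otimes\nu^{\otimes\mathbb N}$ and is ergodic precisely when $\nu$ is adapted, which yields \eqref{equidistribution} with $S_k$ replaced by $gS_k$ for $\mu$-almost every $g$, but specialising to $g=e$ is blocked by the possible mutual singularity of $\nu$ and $\mu$—a difficulty the Fourier argument sidesteps.
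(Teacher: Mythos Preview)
The paper does not prove Theorem~\ref{Berger}; it is quoted as a result of Berger and Evans and used only as background motivation for the paper's own quantitative results. Consequently there is no proof in the paper to compare your proposal against.

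That said, your argument is sound. The contrapositive for the forward direction is clean (the only detail to make explicit is that $G\setminus H$ is open and nonempty, hence of positive Haar measure, so Urysohn can be applied against a compact subset of $G\setminus H$ of positive measure). For the converse, the Peter--Weyl reduction to matrix coefficients is standard, and your dichotomy on $M=\E[\pi(X_1)]$ is the right one: the equality case in $\|Mv\|\le\|v\|$ correctly forces $\pi(X_1)v=\lambda v$ a.s.\ via strict convexity of the Hilbert norm, the stabiliser of the line $\mathbb{C}v$ is indeed a closed subgroup (so adaptedness applies), and irreducibility then collapses $d$ to $1$; the residual character case with $\lambda\neq 1$ is handled by Ces\`aro summation. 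In the contracting case your covariance bound $|\E[\pi_{ab}(S_j)\overline{\pi_{ab}(S_k)}]|\le d\|M^{k-j}\|$ is correct (expand $\pi(S_k)=\pi(S_j)\pi(X_{j+1}\cdots X_k)$ and use independence), and the $N_m=m^2$ subsequence plus the $O(1/m)$ interpolation is the standard way to pass from $L^2$-rates to almost sure convergence. Your closing remark about the ergodic-theoretic alternative and the obstruction at $g=e$ is also accurate.
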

Here a.s.\ (almost surely) means that the given relation holds with probability $1$. Theorem \ref{Berger} is an ergodic theorem; more precisely, it gives a necessary and sufficient condition for the random sequence $S_k$, $k=1,2,\dots$ to be equidistributed in $G$ with probability $1$. Our main goal is to prove quantitative ergodic theorems for the random walk $S_k$. In the terminology of probability theory, this means that we aim to improve the strong law of large numbers \eqref{equidistribution} to more delicate limit theorems such as the central limit theorem (CLT) or the law of the iterated logarithm (LIL). To prove a quantitative version of Theorem \ref{Berger}, we will need stronger, more quantitative assumptions on the distribution of $X_1$ and on the test function $f$. Recall that the $p$-Wasserstein metric $W_p$ metrizes weak convergence of Borel probability measures on $G$ for any $p>0$; in particular by Theorem \ref{Stromberg}, $W_p (\nu^{*k}, \mu) \to 0$ as $k \to \infty$ if and only if $\nu$ is adapted and strictly aperiodic. In our quantitative form of Theorem \ref{Berger} instead of the adaptedness of $\nu$, we shall assume that $W_p (\nu^{*k}, \mu) \to 0$ fast enough. Regarding the test function $f$, instead of continuity, we shall assume that it is $p$-H\"older for some $0<p \le 1$; recall that a function $f:G \to \mathbb{R}$ is called $p$-H\"older if there exists a constant $K \ge 0$ such that $|f(x)-f(y)| \le K d(x,y)^p$ for all $x,y \in G$, where $d$ is the metric on $G$. Our quantitative ergodic theorem is
\begin{thm}\label{theoremCLTLIL} Let $G$ be a second countable compact Hausdorff group with normalized Haar measure $\mu$, and fix a bi-invariant metric on $G$ which metrizes its topology. Consider a sequence $X_1, X_2, \dots$ of i.i.d.\ $G$-valued random variables with distribution $\nu$, and set $S_k=\prod_{j=1}^k X_j$. Let $0<p\le 1$, and assume that $\sum_{k=1}^{\infty} W_p (\nu^{*k}, \mu) < \infty$. Then for any $p$-H\"older function $f:G \to \mathbb{R}$ such that $\int_G f \, \mathrm{d} \mu =0$, the sum $\sum_{k=1}^N f(S_k)$ satisfies the central limit theorem
\begin{equation}\label{mainCLT}
\frac{\sum_{k=1}^N f(S_k)}{\sqrt{N}} \overset{d}{\to} \mathcal{N}(0,\sigma^2 )
\end{equation}
and the law of the iterated logarithm
\begin{equation}\label{mainLIL}
\limsup_{N \to \infty}\frac{\sum_{k=1}^N f(S_k)}{\sqrt{2 N \log \log N}} = \sigma \qquad \mathrm{a.s.}
\end{equation}
with some constant $0 \le \sigma <\infty$ depending only on $f$ and $\nu$.
\end{thm}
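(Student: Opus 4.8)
The plan is to use Gordin's martingale approximation: decompose $f(S_k)$ as a martingale difference plus a telescoping error, and then feed a suitably chosen continuous function back into the Berger--Evans ergodic theorem (Theorem \ref{Berger}) to identify the limiting variance. \textbf{Setup.} The hypothesis $\sum_k W_p(\nu^{*k},\mu)<\infty$ forces $W_p(\nu^{*k},\mu)\to 0$, hence $\nu^{*k}\to\mu$ weakly, so by Theorem \ref{Stromberg} the measure $\nu$ is adapted; in particular Theorem \ref{Berger} applies to the walk $S_k$. Write $P$ for the Markov operator $(Pg)(x)=\int_G g(xy)\,\mathrm d\nu(y)$, so that $(P^j g)(x)=\int_G g(xy)\,\mathrm d\nu^{*j}(y)$ and $\mathbb{E}[g(S_{k+j})\mid\mathcal F_k]=(P^j g)(S_k)$ for the natural filtration $\mathcal F_k=\sigma(X_1,\dots,X_k)$. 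Since $0<p\le 1$, the function $d^p$ is again a bi-invariant metric on $G$ metrizing its topology, and ``$p$-H\"older with seminorm $K$ for $d$'' is exactly ``Lipschitz with constant $K$ for $d^p$''; combining this with the right invariance of $\mu$ (which gives $\int_G f(xy)\,\mathrm d\mu(y)=\int_G f\,\mathrm d\mu=0$ for each $x$), the Kantorovich--Rubinstein duality (in the normalization of $W_p$, $0<p\le1$, under which $W_p$ is the $1$-Wasserstein distance of $d^p$) yields $\|P^jf\|_\infty\le K\,W_p(\nu^{*j},\mu)$ for all $j\ge 1$. Hence $h:=\sum_{j=0}^\infty P^j f$ converges uniformly; $h$ is continuous and bounded, and solves the Poisson equation $h-Ph=f$.

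\textbf{Martingale approximation.} With $S_0=e$, set $D_k:=h(S_k)-(Ph)(S_{k-1})=h(S_k)-\mathbb{E}[h(S_k)\mid\mathcal F_{k-1}]$ for $k\ge 1$. Then $(D_k,\mathcal F_k)$ is a martingale difference sequence with $|D_k|\le 2\|h\|_\infty$, and telescoping the identity $f(S_k)=h(S_k)-(Ph)(S_k)=D_k+(Ph)(S_{k-1})-(Ph)(S_k)$ gives
\begin{equation*}
\sum_{k=1}^N f(S_k)=\sum_{k=1}^N D_k+(Ph)(S_0)-(Ph)(S_N),
\end{equation*}
where the remainder is bounded by $2\|h\|_\infty$. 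Thus $\sum_{k=1}^N f(S_k)$ differs from the martingale $M_N:=\sum_{k=1}^N D_k$ by an almost surely bounded quantity, so it suffices to establish \eqref{mainCLT} and \eqref{mainLIL} for $M_N$.

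\textbf{Variance and conclusion.} A direct computation gives $\mathbb{E}[D_k^2\mid\mathcal F_{k-1}]=V(S_{k-1})$, where $V:=P(h^2)-(Ph)^2$ is continuous, bounded and non-negative. Applying Theorem \ref{Berger} to the continuous function $V$,
\begin{equation*}
\frac1N\sum_{k=1}^N \mathbb{E}[D_k^2\mid\mathcal F_{k-1}]=\frac1N\sum_{k=0}^{N-1}V(S_k)\ \longrightarrow\ \sigma^2:=\int_G V\,\mathrm d\mu=\int_G h^2\,\mathrm d\mu-\int_G (Ph)^2\,\mathrm d\mu\in[0,\infty)\qquad\text{a.s.,}
\end{equation*}
a quantity depending only on $f$ and $\nu$ (note $\int_G(Ph)^2\,\mathrm d\mu\le\int_G P(h^2)\,\mathrm d\mu=\int_G h^2\,\mathrm d\mu$ by Jensen and the $\mu$-invariance of $P$). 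Since $|D_k|\le 2\|h\|_\infty$, the conditional Lindeberg condition is automatic, so the martingale central limit theorem yields $M_N/\sqrt N\overset{d}{\to}\mathcal N(0,\sigma^2)$, whence \eqref{mainCLT} follows via the approximation above and Slutsky's theorem; likewise the martingale law of the iterated logarithm (Stout) gives $\limsup_N M_N/(2N\log\log N)^{1/2}=\sigma$ a.s., whence \eqref{mainLIL}. The degenerate case $\sigma=0$ is covered directly: the CLT limit is then $\mathcal N(0,0)=\delta_0$, and since the conditional variances are $o(N)$ with $|D_k|$ bounded, a standard martingale maximal inequality gives $M_N=o((N\log\log N)^{1/2})$ a.s.

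\textbf{Main obstacle.} The one genuinely new ingredient is the estimate $\|P^jf\|_\infty\le K\,W_p(\nu^{*j},\mu)$ of the first step together with its interplay with the normalization of $W_p$ for $0<p\le1$; once that is in place, the remaining steps --- Gordin's decomposition and the classical martingale CLT and LIL, with Theorem \ref{Berger} supplying the a.s.\ convergence of the conditional variances --- are routine.
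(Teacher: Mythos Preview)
Your proof is correct and takes a genuinely different route from the paper. The paper derives Theorem~\ref{theoremCLTLIL} as a corollary of a stronger almost sure invariance principle (Theorem~\ref{theoremWIENER}): it partitions $\mathbb{N}$ into alternating long and short blocks, uses an optimal $W_p$-coupling to replace each short block product by a uniform variable (thereby manufacturing \emph{independence} among the long block sums), and then applies Strassen's invariance principle for independent summands. You instead go via Gordin's martingale approximation: the key estimate $\|P^jf\|_\infty\le K\,W_p(\nu^{*j},\mu)$ yields a bounded continuous solution $h=\sum_{j\ge 0}P^jf$ of the Poisson equation, after which the martingale CLT and Stout's LIL apply directly, with Theorem~\ref{Berger} elegantly supplying the a.s.\ convergence of the conditional variances. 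Your approach is shorter and uses only standard Markov-chain machinery; the paper's coupling method buys the full Wiener-process approximation with an explicit error $o(t^{5/12+\varepsilon})$ and, as the paper stresses, transfers verbatim to other probability metrics admitting a coupling interpretation. Two small remarks: in your degenerate case, since $V\ge 0$ is continuous and $\mu$ has full support, $\int_G V\,\mathrm d\mu=0$ forces $V\equiv 0$, hence $D_k=0$ a.s.\ and $M_N\equiv 0$ --- cleaner than invoking a maximal inequality; and one checks that your $\sigma^2=\|h\|_2^2-\|Ph\|_2^2=2\langle h,f\rangle_{L^2(\mu)}-\|f\|_2^2$ agrees with the paper's $C(f,\nu)$ in~\eqref{Cfnu}.
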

Relation \eqref{mainCLT} means convergence in distribution to the normal distribution with mean zero and variance $\sigma^2$ (or to the constant $0$ in case $\sigma =0$). We emphasize that the $p$-Wasserstein metric and $p$-H\"older functions are defined with respect to the same bi-invariant metric. We conjecture that the condition $\sum_{k=1}^{\infty} W_p(\nu^{*k}, \mu) < \infty$ is best possible. Theorem \ref{theoremCLTLIL} is in fact a direct corollary of our main result, an almost sure approximation of the sum $\sum_{k=1}^N f(S_k)$ by a Wiener process, see Theorem \ref{theoremWIENER}.

In the first part of this paper \cite{BO} we considered the class of all Borel measurable test functions. As an analogue of Theorem \ref{Berger} we proved that the strong law of large numbers \eqref{equidistribution} holds for all bounded Borel measurable functions $f$ if and only if the distribution of $X_1$ is adapted, and the distribution of $S_k$ has an absolutely continuous component for some $k \ge 1$. We also proved quantitative forms of equidistribution such as the CLT and the LIL, and showed that the assumption that $f$ is bounded can be weakened to delicate moment conditions. The main message of our first paper \cite{BO} is that random walks with an absolutely continuous component satisfy quantitative ergodic theorems for the much wider class of Borel measurable test functions. In the present paper we will thus focus on random walks with singular distributions; see Theorems \ref{rankrlattice1} and \ref{rankrlattice2} for examples of discrete distributions to which Theorem \ref{theoremCLTLIL} applies.

Quantitative forms of Kakutani's random ergodic theorem for ergodic group actions along random walks were given by Furman and Shalom \cite{FS}. Their results apply in particular to the action of certain subgroups of $\mathrm{SL}_d(\mathbb{Z})$ on the torus $\mathbb{R}^d/\mathbb{Z}^d$. Such actions along random walks have more recently been studied by Bourgain, Furman, Lindenstrauss and Mozes \cite{BFLM} and by Boyer \cite{BOY}. For random walks on $\mathrm{GL}_d(\mathbb{R})$ see Cuny, Dedecker and Jan \cite{CDJ} and references therein. The case of Abelian groups is of course simpler; for instance, Theorem \ref{Berger} for the circle group $G=\mathbb{R}/\mathbb{Z}$ dates back to Robbins in 1953 \cite{RO}. Berkes and Borda \cite{BB3} proved quantitative ergodic theorems on the circle group for test functions of bounded variation. The same authors also showed that the LIL \eqref{mainLIL} with $f(x)=\exp (2 \pi i x)$ on the circle group holds if and only if the distribution of $X_1$ is nondegenerate \cite{BB2}. It is natural to ask whether this fact generalizes to arbitrary compact groups; for example, one might conjecture that the CLT \eqref{mainCLT} and the LIL \eqref{mainLIL} with some $0 \le \sigma < \infty$ remain true for characters $f(x)=\mathrm{tr}\, \pi (x)$, $\pi$ a nontrivial irreducible unitary representation of $G$, under the sole assumption that $\nu$ is adapted (without assuming fast enough convergence of $\nu^{*k}$ in some probability metric).

The proof of Theorem \ref{theoremCLTLIL} (and our main result Theorem \ref{theoremWIENER}) is based on a perturbation method. Simply put, in the long product $S_k=X_1 X_2 \cdots X_k$ we will replace the product $S_J:=\prod_{j \in J} X_j$ over a short block $J \subset [1,k]$ by a uniformly distributed variable $U_J$. Surprisingly, this perturbation will introduce \textit{independence}, and limit theorems such as the CLT and the LIL follow from classical methods of probability theory. In order for the error of this perturbation to be small, we need to use a uniformly distributed variable $U_J$ which is, in a sense, close to $S_J$; in other words, we need a suitable coupling of $\nu^{*|J|}$ and the Haar measure $\mu$. This method is quite flexible, and works whenever a metric $\delta$ on the set of Borel probability measures on $G$ is defined as
\begin{equation}\label{deltadefinition}
\delta (\nu_1, \nu_2) = \sup_{f \in \mathcal{F}} \left| \int_G f \, \mathrm{d} \nu_1 - \int_G f \, \mathrm{d} \nu_2 \right|
\end{equation}
with a suitable class of test functions $\mathcal{F}$, and $\delta (\nu^{*k} , \mu) < \varepsilon$ implies the existence of a suitable coupling of $\nu^{*k}$ and the Haar measure $\mu$. In this case the sum $\sum_{k=1}^N f(S_k)$ satisfies limit theorems such as the CLT and the LIL for all $f \in \mathcal{F}$, provided $\delta (\nu^{*k},\mu) \to 0$ fast enough.

We find the perturbation method outlined above remarkable for three reasons: it is simple, flexible and completely Fourier analysis free. In its simplest form it was first used by Schatte \cite{SCH1}, \cite{SCH2} with the uniform metric (also called Kolmogorov metric) $\delta_{\mathrm{unif}}(\nu_1, \nu_2):=\sup_{x \in [0,1]} |\nu_1 ([0,x)) - \nu_2 ([0,x))|$ on the set of Borel probability measures on the circle group $G=\mathbb{R}/\mathbb{Z}$, identified with the interval $[0,1)$. Note that the uniform metric is also of the form \eqref{deltadefinition} with the class of functions of bounded variation; further, if $\delta_{\mathrm{unif}}(\nu_1, \mu) < \varepsilon$, then there exists a coupling $(X,U)$ of $\nu_1$ and $\mu$ with $|X-U|<\varepsilon$ a.s. For the details see Berkes and Borda \cite{BB3}.

What we do here is a far-reaching generalization of Schatte's perturbation method: we show that it works on arbitrary compact groups with various probability metrics. In the first part of this paper \cite{BO} we used this method with the total variation metric $\| \cdot \|_{\mathrm{TV}}$ and the class of bounded Borel measurable test functions; recall that $\| \nu_1 - \nu_2 \|_{\mathrm{TV}}<\varepsilon$ implies the existence of a coupling $(X,Y)$ of $\nu_1$ and $\nu_2$ with $\Pr (X \neq Y)<\varepsilon$. In the present paper we work out the details of this approach for the $p$-Wasserstein metric $W_p$ and the class of $p$-H\"older test functions; recall that $W_p(\nu_1, \nu_2) < \varepsilon$ implies the existence of a coupling $(X,Y)$ of $\nu_1$ and $\nu_2$ with $\E \, d(X,Y)^p < \varepsilon$.

Theorem \ref{theoremCLTLIL} (and our main result Theorem \ref{theoremWIENER}) raise two natural questions: first, how to estimate the rate of convergence of $\nu^{*k}$ to the Haar measure $\mu$ in the $p$-Wasserstein metric to tell whether our results apply to a given random walk; second, how to characterize the test functions $f$ for which $\sigma =0$. While the proof of Theorems \ref{theoremCLTLIL} and \ref{theoremWIENER} are completely Fourier analysis free, to settle these questions Fourier methods seem inevitable. In this paper we only consider these two questions in the simplest case of the additive group of the $d$-dimensional torus $G=\mathbb{R}^d/\mathbb{Z}^d$. For the sake of simplicity, we use the Euclidean metric on $\mathbb{R}^d/\mathbb{Z}^d$ and the maximum metric $\| \cdot \|_{\infty}$ on its unitary dual $\mathbb{Z}^d$. Further, $\widehat{\nu}$ denotes the Fourier transform of a Borel probability measure $\nu$. The second question has a very simple answer on the torus.
\begin{prop}\label{sigmaequalzero} In the case of the torus $G=\mathbb{R}^d/\mathbb{Z}^d$ with the Euclidean metric, in Theorem \ref{theoremCLTLIL} we have $\sigma =0$ if and only if $f=0$.
\end{prop}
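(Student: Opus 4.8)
The plan is to identify the variance $\sigma^2$ with an explicit series over Fourier modes, and then observe that under the hypothesis $\sum_{k\ge1}W_p(\nu^{*k},\mu)<\infty$ every term of this series is nonnegative and vanishes only when the corresponding Fourier coefficient of $f$ does. Write $\widehat{f}(n)=\int_{G}f(x)e^{-2\pi in\cdot x}\,\mathrm{d}x$ for $n\in\mathbb{Z}^d$ (so $\widehat{f}(0)=\int f\,\mathrm{d}\mu=0$), and recall that $\sigma$, as it is constructed in the proof of Theorem \ref{theoremWIENER}, is the asymptotic standard deviation: $\sigma^2=\lim_{N\to\infty}\frac1N\,\E\big[(\sum_{k=1}^N f(S_k))^2\big]$. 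Splitting $S_{j+r}=S_j+(X_{j+1}+\cdots+X_{j+r})$ into independent parts gives $\E[f(S_j)f(S_{j+r})]=\int f\cdot P^rf\,\mathrm{d}\nu^{*j}$, where $Pg(x)=\int g(x+y)\,\mathrm{d}\nu(y)$, hence $P^rg(x)=\int g(x+y)\,\mathrm{d}\nu^{*r}(y)$. Since $\int f(x+y)\,\mathrm{d}\mu(y)=0$ for all $x$ and $y\mapsto f(x+y)$ is $p$-H\"older with the same constant $K$ as $f$ (the metric is translation invariant), the coupling property of $W_p$ recalled in the introduction yields $\|P^rf\|_\infty\le K\,W_p(\nu^{*r},\mu)$, so $|\E[f(S_j)f(S_{j+r})]|\le\|f\|_\infty K\,W_p(\nu^{*r},\mu)=:M_r$ with $\sum_r M_r<\infty$. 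As $\nu^{*j}\to\mu$ weakly and $f\cdot P^rf$ is continuous, $\E[f(S_j)f(S_{j+r})]\to\gamma_r:=\int f\cdot P^rf\,\mathrm{d}\mu$, and a Parseval computation identifies $\gamma_r=\sum_{n\in\mathbb{Z}^d}|\widehat{f}(n)|^2\widehat{\nu}(n)^r$ (the placement of complex conjugates depends on the Fourier sign convention and is immaterial below). A routine Ces\`aro argument, with the summable $M_r$ controlling the tails uniformly in $j$, then gives $\sigma^2=\gamma_0+2\sum_{r=1}^\infty\gamma_r$.

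I would then turn this into a sum over frequencies. From $W_p(\nu^{*k},\mu)\to0$ it follows that $\nu^{*k}\to\mu$ weakly, so $\widehat{\nu}(n)^k=\widehat{\nu^{*k}}(n)\to0$ and therefore $|\widehat{\nu}(n)|<1$ for every $n\ne0$. For $|z|<1$ set $g(z)=\gamma_0+2\sum_{r\ge1}\gamma_rz^r$; since $\sum_r|\gamma_r|<\infty$ this series converges uniformly on $\{|z|\le1\}$ with $g(1)=\sigma^2$, and for $|z|<1$ the iterated series $\sum_{r\ge1}\sum_n|\widehat{f}(n)|^2|z|^r|\widehat{\nu}(n)|^r$ is dominated by $\frac{|z|}{1-|z|}\|f\|_{L^2}^2$, so Fubini and summation of the geometric series give
\[
g(z)=\sum_{n\ne0}|\widehat{f}(n)|^2\,\frac{1+z\widehat{\nu}(n)}{1-z\widehat{\nu}(n)},\qquad\text{whence}\qquad \mathrm{Re}\,g(z)=\sum_{n\ne0}|\widehat{f}(n)|^2\,\frac{1-z^2|\widehat{\nu}(n)|^2}{|1-z\widehat{\nu}(n)|^2},
\]
a sum of nonnegative terms for $0\le z<1$. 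Letting $z\uparrow1$, the left-hand side tends to $\mathrm{Re}\,g(1)=\sigma^2$, while Fatou's lemma on the right gives
\[
\sigma^2\ \ge\ \sum_{n\ne0}|\widehat{f}(n)|^2\,\frac{1-|\widehat{\nu}(n)|^2}{|1-\widehat{\nu}(n)|^2}.
\]

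Finally, if $\sigma=0$ then every term of the last series vanishes; since $|\widehat{\nu}(n)|<1$ and $\widehat{\nu}(n)\ne1$ for $n\ne0$, we have $\frac{1-|\widehat{\nu}(n)|^2}{|1-\widehat{\nu}(n)|^2}>0$, hence $\widehat{f}(n)=0$ for all $n\ne0$; combined with $\widehat{f}(0)=0$ this forces $\widehat{f}\equiv0$, i.e.\ $f=0$ since $f$ is continuous. The reverse implication is trivial. I expect the only genuinely delicate point to be this last rearrangement: when $\nu$ is singular the quantities $1-|\widehat{\nu}(n)|$ need not stay bounded away from $0$, so the sums over time-lags $r$ and over frequencies $n$ cannot be interchanged directly; carrying out the interchange only for $|z|<1$ and then letting $z\uparrow1$ through Abel's theorem and Fatou's lemma is what circumvents this (and in fact yields equality in the displayed bound, although only the inequality is needed).
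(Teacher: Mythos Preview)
Your proof is correct and takes a genuinely different route from the paper's. Both arguments aim at the key inequality
\[
\sigma^2 \;\ge\; \sum_{n\ne0}|\widehat{f}(n)|^2\,\frac{1-|\widehat{\nu}(n)|^2}{|1-\widehat{\nu}(n)|^2},
\]
and both must cope with the fact that for singular $\nu$ the moduli $|\widehat{\nu}(n)|$ need not stay bounded away from $1$, so the double sum $\sum_r\sum_n|\widehat f(n)|^2\widehat\nu(n)^r$ cannot be interchanged directly. The paper handles this by truncating in \emph{frequency}: it first verifies the identity for trigonometric polynomials (where the sum over $n$ is finite and the interchange is trivial), then approximates a general $f\in\mathcal{F}_p$ by its Fej\'er means $f_H=f*F_H^{(d)}$ and shows $C(f_H,\nu)\to C(f,\nu)$ via a H\"older-norm estimate on $g-g_H$. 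You instead regularize in the \emph{time-lag} variable: introducing the generating function $g(z)=\gamma_0+2\sum_{r\ge1}\gamma_r z^r$, you interchange legitimately for $|z|<1$ (where the geometric tail is uniformly controlled by $\|f\|_2^2$), take real parts to obtain a sum of nonnegative terms, and then pass to $z\uparrow1$ by Abel's theorem on the left and Fatou's lemma on the right. Your method is shorter and avoids the Fej\'er-kernel machinery entirely; the paper's method pays for that machinery by obtaining exact equality (its formula \eqref{cfnualternative}), whereas you get only the inequality---but, as you note, the inequality is all the proposition requires.

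One efficiency remark: your opening Ces\`aro computation rederives exactly what the paper already packages as $C(f,\nu)$ in \eqref{Cfnu} and identifies with $\sigma^2$ immediately after Theorem~\ref{theoremWIENER}. You could simply start from $\sigma^2=C(f,\nu)=\gamma_0+2\sum_{r\ge1}\gamma_r$ by citing \eqref{Cfnu}, rather than recomputing the asymptotic variance of $\sum_{k\le N}f(S_k)$ from scratch.
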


To estimate the rate of convergence in the $p$-Wasserstein metric, we will use a Berry--Esseen type inequality. The corresponding result for the uniform metric on $\mathbb{R}^d/\mathbb{Z}^d$ is due to Niederreiter and Philipp \cite{NP}. The original Berry--Esseen inequality concerns the uniform metric on $\mathbb{R}$ \cite[p.\ 142]{P}; a generalization for the uniform metric on $\mathbb{R}^d$ is due to von Bahr \cite{vB}. Bobkov \cite{BOB3} gives a survey of similar smoothing inequalities for several other probability metrics on $\mathbb{R}$.
\begin{prop}[Berry--Esseen inequality for $W_1$ on the torus]\label{ErdosTuran} Let $\nu_1$ and $\nu_2$ be Borel probability measures on $\mathbb{R}^d/\mathbb{Z}^d$ with the Euclidean metric. For any integer $H \ge 1$,
\[ W_1 (\nu_1, \nu_2 ) \le \frac{6d}{H} + \frac{d^{1/2}}{2 \pi} \left( \sum_{\substack{h \in \mathbb{Z}^d \\ 0< \| h \|_{\infty}<H}} \frac{|\widehat{\nu_1}(h) - \widehat{\nu_2}(h)|^2}{|h|^2} \right)^{1/2} . \]
\end{prop}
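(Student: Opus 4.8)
The plan is to combine Kantorovich--Rubinstein duality with a smoothing argument. Since $\mathbb{R}^d/\mathbb{Z}^d$ with the Euclidean metric is a compact metric space,
\[ W_1(\nu_1,\nu_2)=\sup\left\{\left|\int_{\mathbb{R}^d/\mathbb{Z}^d}f\,\mathrm{d}\nu_1-\int_{\mathbb{R}^d/\mathbb{Z}^d}f\,\mathrm{d}\nu_2\right|:\ f\ \text{is }1\text{-Lipschitz}\right\}, \]
so it suffices to bound $\big|\int f\,\mathrm{d}(\nu_1-\nu_2)\big|$ for a fixed $1$-Lipschitz $f$. I would split $f=(f-f*K_H)+f*K_H$, where $K_H$ is a nonnegative trigonometric polynomial on $\mathbb{R}^d/\mathbb{Z}^d$ with $\int K_H\,\mathrm{d}\mu=1$ and Fourier support in $\{h\in\mathbb{Z}^d:\|h\|_\infty<H\}$; concretely one takes the tensor product $K_H(x)=\prod_{j=1}^d J_H(x_j)$ of one-dimensional Jackson kernels of degree $<H$. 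The idea is that $f-f*K_H$ is small in sup norm, while $f*K_H$ is a trigonometric polynomial that involves only the finitely many frequencies $0<\|h\|_\infty<H$.

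For the remainder, the Lipschitz property gives, for every $x$,
\[ |f(x)-(f*K_H)(x)|=\left|\int\big(f(x)-f(x-y)\big)K_H(y)\,\mathrm{d}\mu(y)\right|\le\int d(y,0)\,K_H(y)\,\mathrm{d}\mu(y), \]
where $d$ denotes the Euclidean metric on the torus; bounding $d(y,0)$ by the sum of the distances of the coordinates of $y$ to $\mathbb{Z}$ reduces the right-hand side, via the product structure of $K_H$, to $d$ times the first absolute moment of $J_H$, which for a suitably normalized Jackson kernel is at most $3/H$. Hence $\|f-f*K_H\|_\infty\le 3d/H$, and by the triangle inequality $\big|\int(f-f*K_H)\,\mathrm{d}(\nu_1-\nu_2)\big|\le 2\|f-f*K_H\|_\infty\le 6d/H$. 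For the band-limited part, $\widehat{f*K_H}(h)=\widehat{f}(h)\widehat{K_H}(h)$ is supported on $\|h\|_\infty<H$, and the frequency $h=0$ contributes nothing to $\int f*K_H\,\mathrm{d}(\nu_1-\nu_2)$ because $\widehat{\nu_1}(0)=\widehat{\nu_2}(0)=1$; expanding in Fourier series and applying the Cauchy--Schwarz inequality to the pairing of $|h|\,|\widehat{f}(h)|\,|\widehat{K_H}(h)|$ against $|\widehat{\nu_1}(h)-\widehat{\nu_2}(h)|/|h|$ gives
\[ \left|\int f*K_H\,\mathrm{d}(\nu_1-\nu_2)\right|\le\left(\sum_{0<\|h\|_\infty<H}|h|^2|\widehat{f}(h)|^2|\widehat{K_H}(h)|^2\right)^{1/2}\left(\sum_{0<\|h\|_\infty<H}\frac{|\widehat{\nu_1}(h)-\widehat{\nu_2}(h)|^2}{|h|^2}\right)^{1/2}. \]
Now $|\widehat{K_H}(h)|\le\widehat{K_H}(0)=1$ because $K_H\ge 0$, and since $f$ is $1$-Lipschitz each partial derivative satisfies $\|\partial_j f\|_\infty\le 1$, so by Plancherel $\sum_{h\in\mathbb{Z}^d}|h|^2|\widehat{f}(h)|^2=\frac{1}{4\pi^2}\sum_{j=1}^d\|\partial_j f\|_{L^2}^2\le\frac{d}{4\pi^2}$. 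Thus the first factor is at most $d^{1/2}/(2\pi)$, and adding the two estimates proves the proposition.

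The one genuinely technical ingredient is the kernel $K_H$: one needs a nonnegative trigonometric polynomial of degree $<H$, normalized to have integral $1$, whose first absolute moment is $O(1/H)$ with a sufficiently small constant. This is precisely where a plain Fej\'er kernel fails --- its first moment is of order $(\log H)/H$ --- so a Jackson-type kernel, e.g.\ a normalized square of a Fej\'er kernel, is needed; checking the explicit bound $3/H$ in one dimension is a short but somewhat delicate estimate. The remaining ingredients --- Kantorovich duality, the convolution splitting, and the Plancherel--Cauchy--Schwarz bound --- are routine.
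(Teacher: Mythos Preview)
Your proposal is correct and follows essentially the same route as the paper: Kantorovich--Rubinstein duality, smoothing by a tensor-product Jackson kernel, the Jackson-type bound $\|f-f*K_H\|_\infty \le 3d/H$ via the first absolute moment, and Cauchy--Schwarz combined with the Fourier decay estimate $\sum_h |h|^2|\widehat{f}(h)|^2 \le d/(4\pi^2)$. The only notable difference is in how that last inequality is justified: you invoke partial derivatives and Plancherel, which implicitly relies on Rademacher's theorem (a $1$-Lipschitz $f$ need not be everywhere differentiable), whereas the paper derives it directly from finite differences, applying Parseval to $f(\cdot+y)-f(\cdot)$ and letting $y\to 0$ along coordinate axes --- a slightly cleaner device that avoids any differentiability issues.
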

The proof is based on smoothing with the normalized square of the $d$-dimensional Fej\'er kernel as an approximate identity. An application of the H\"older inequality shows that $W_p(\nu_1, \nu_2) \le W_1 (\nu_1, \nu_2)^p$ for all $0<p \le 1$, thus Proposition \ref{ErdosTuran} also yields an upper bound for $W_p$, $0<p \le 1$; alternatively, a straightforward modification of the proof gives a Berry--Esseen type inequality directly for $W_p$, $0<p\le 1$. We mention that a similar result has already been available in the case $d=1$. Indeed, in one dimension every Lipschitz function is of bounded variation, and so Koksma's inequality \cite[p.\ 143]{KN} combined with the Berry--Esseen type inequality of Niederreiter and Philipp for the uniform metric yields an upper estimate for $W_1$. This deduction no longer works in higher dimensions; the reason is that for every $d \ge 2$ there exist Lipschitz functions on $\mathbb{R}^d/\mathbb{Z}^d$ which are not of bounded variation in the sense of Hardy and Krause (see \cite[Section 3.1]{BA} for a construction). Consequently, for $d \ge 2$ Proposition \ref{ErdosTuran} and the theorem of Niederreiter and Philipp are not comparable.

We emphasize that neither the statement nor the proof of Proposition \ref{ErdosTuran} uses concepts such as axis parallel boxes or functions of bounded variation; the only ingredients are the metric on $\mathbb{R}^d/\mathbb{Z}^d$ and the unitary dual. Generalizing this approach, smoothing inequalities can be proved in other classical compact groups as well, yielding examples of random walks on noncommutative compact groups to which Theorem \ref{theoremCLTLIL} applies. The details will be given elsewhere.

With Proposition \ref{ErdosTuran} at our disposal, it is not difficult to find examples of random walks with fast convergence in the $p$-Wasserstein metric. We work out the details for certain random walks on rank $r$ lattices. We mention a simple explicit construction here; for more general results see Section \ref{speedsection}. Let $r,d \ge 1$ be integers, and consider a monic irreducible polynomial with integer coefficients of degree $r+d$, with distinct real roots $a_1, a_2, \dots, a_{r+d}$. In other words, $a_1, a_2, \dots, a_{r+d}$ are conjugate real algebraic integers. Let $V$ be the $r \times r$ matrix with entries $V_{ij}=a_i^{j-1}$, and let $W$ be the $r \times d$ matrix with entries $W_{ij}=a_i^{r+j-1}$. Note that $V$ is an invertible Vandermonde matrix. Consider the $r \times d$ matrix $M=V^{-1}W$.
\begin{thm}\label{rankrlattice1} Let $\alpha_1, \alpha_2, \dots, \alpha_r \in \mathbb{R}^d$ denote the $r$ rows of the matrix $M$, and let $\nu$ be the Borel probability measure on $\mathbb{R}^d/\mathbb{Z}^d$ with atoms $\pm \alpha_i + \mathbb{Z}^d$, $1 \le i \le r$ of weight $1/(2r)$ each. For any $0<p \le 1$,
\[ k^{-r/(2d)} \ll W_p(\nu^{*k},\mu)^{1/p} \ll k^{-r/(2d)} \]
with implied constants depending only on the matrix $M$. In particular, if $pr>2d$, then $\sum_{k=1}^{\infty} W_p (\nu^{*k},\mu)<\infty$, and Theorem \ref{theoremCLTLIL} applies.
\end{thm}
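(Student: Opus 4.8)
The plan is to establish the two inequalities separately, using only soft Diophantine input for the lower bound and the algebraic structure of the lattice for the upper bound. For the lower bound $W_p(\nu^{*k},\mu)\gg k^{-pr/(2d)}$, Dirichlet's theorem on simultaneous approximation of the $r$ linear forms $h\mapsto\alpha_i\cdot h$ in $d$ variables yields, for each $Q\ge1$, a vector $h\in\mathbb Z^d\setminus\{0\}$ with $\|h\|_\infty\le Q$ and $\max_i\mathrm{dist}(\alpha_i\cdot h,\mathbb Z)\le Q^{-d/r}$. Choosing $Q=Q_k\asymp k^{r/(2d)}$ and using $\cos2\pi t\ge1-2\pi^2\,\mathrm{dist}(t,\mathbb Z)^2$ produces a vector $h_k$ with $|h_k|\ll k^{r/(2d)}$ and $\widehat{\nu}(h_k)^k=\bigl(\tfrac1r\sum_i\cos2\pi(\alpha_i\cdot h_k)\bigr)^k\ge(1-O(1/k))^k\gg1$ (here $\widehat{\nu^{*k}}=\widehat\nu^k$ is real, as $\nu$ is symmetric). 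Testing the Kantorovich--Rubinstein duality for $W_p$ against $f_k(x)=(2\pi|h_k|^p)^{-1}\cos2\pi(h_k\cdot x)$ — which one checks is $p$-H\"older with constant at most $1$ by splitting according to whether $|h_k|\,d(x,y)\le1$ — gives $W_p(\nu^{*k},\mu)\ge|h_k|^{-p}\widehat{\nu}(h_k)^k/(2\pi)\gg k^{-pr/(2d)}$, hence $W_p(\nu^{*k},\mu)^{1/p}\gg k^{-r/(2d)}$.

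For the upper bound, since $W_p\le W_1^p$ it suffices to prove $W_1(\nu^{*k},\mu)\ll k^{-r/(2d)}$. I would apply Proposition \ref{ErdosTuran} with $H=\lceil k^{r/(2d)}\rceil$; the $6d/H$ term is $O(k^{-r/(2d)})$, so it remains to show $\Sigma_k:=\sum_{0<\|h\|_\infty<H}|\widehat\nu(h)|^{2k}/|h|^2\ll k^{-r/d}$. From $x^k\le e^{-k(1-x)}$, the Cauchy--Schwarz inequality, and $|\sin\pi t|\ge2\,\mathrm{dist}(t,\mathbb Z)$ one gets $|\widehat\nu(h)|^{2k}\le e^{-ck\psi(h)}$ with $\psi(h):=\sum_{i=1}^r\mathrm{dist}(2(\alpha_i\cdot h),\mathbb Z)^2$ and $c=c(r)>0$, so $\Sigma_k\le\sum_{0<\|h\|_\infty<H}e^{-ck\psi(h)}/|h|^2$. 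I would estimate this by decomposing dyadically in $\|h\|_\infty\asymp B$ and expanding $e^{-ck\psi(h)}$ by the layer-cake formula in $\psi$, thereby reducing matters to counting the $h$ in a box of side $B$ for which $\max_i\mathrm{dist}(2(\alpha_i\cdot h),\mathbb Z)$ is small.

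Two Diophantine facts drive the estimate. (i) \emph{A uniform lower bound} $\psi(h)\gg|h|^{-2d/r}$ for $h\in\mathbb Z^d\setminus\{0\}$: rounding $2(\alpha_i\cdot h)$ to integers $m_i$ and substituting into $VMh=Wh$ produces the integer polynomial $Q(x)=2\sum_jh_jx^{r+j-1}-\sum_\ell m_\ell x^{\ell-1}$, of degree $<r+d$, with $Q(a_i)=\sum_\ell a_i^{\ell-1}\varepsilon_\ell$ for $i\le r$, where $\varepsilon_\ell=2(\alpha_\ell\cdot h)-m_\ell$. Since $Q\ne0$ has degree below that of the minimal polynomial of $a_1$, the product $\prod_{i=1}^{r+d}Q(a_i)$ is a nonzero rational integer; bounding the $r$ ``small'' factors by $O(\max_\ell|\varepsilon_\ell|)$ and the $d$ ``large'' ones by $O(|h|)$ gives $\max_\ell|\varepsilon_\ell|\gg|h|^{-d/r}$, which is (i). (ii) \emph{A logarithm-free count} $\#\{h\in\mathbb Z^d:\|h\|_\infty\le N,\ \max_i\mathrm{dist}(2(\alpha_i\cdot h),\mathbb Z)\le t\}\ll\max(1,N^dt^r)$: a fixed change of basis (built from the Schur complement of the full Vandermonde matrix) identifies the unimodular lattice $\Lambda=\{(h,Mh+m):h\in\mathbb Z^d,\,m\in\mathbb Z^r\}\subset\mathbb R^{d+r}$ with the image of the order $\mathbb Z[a_1]$ under the real embeddings of the totally real field $\mathbb Q(a_1)$, which lies in a compact orbit of the full diagonal torus; hence $\Lambda$ remains bounded away from the cusp under the flow $\mathrm{diag}(e^sI_d,e^{-sd/r}I_r)$, so by Minkowski's second theorem the successive minima of the appropriately rescaled lattice all stay comparable, which yields (ii). Granting (i) and (ii), the shell $\|h\|_\infty\asymp B$ contributes $\ll B^{d-2}k^{-r/2}(1+\tau_B)^{r/2}e^{-\tau_B}$ with $\tau_B\asymp kB^{-2d/r}$; writing $B=H2^{-l}$ this is $\ll k^{-r/d}\,2^{2l}e^{-\mathrm{const}\cdot2^{2ld/r}}$, and summing the convergent series over $l\ge0$ gives $\Sigma_k\ll k^{-r/d}$, finishing the upper bound.

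The crux is step (ii): the counting bound must carry no logarithmic factor, so the discrepancy-type estimates available for free from ``$M$ badly approximable'' are not enough — one genuinely needs $\Lambda$ to lie in a \emph{compact} torus orbit, not merely to avoid a neighbourhood of the cusp along one flow direction, and this is exactly where the hypothesis that $a_1,\dots,a_{r+d}$ are conjugate algebraic integers enters decisively; identifying $\Lambda$ with the number-field lattice is elementary but needs a careful bit of linear algebra. The closing assertion of the theorem is then immediate: $W_p(\nu^{*k},\mu)\asymp k^{-pr/(2d)}$ is summable precisely when $pr/(2d)>1$, i.e. $pr>2d$, and in that case $\sum_kW_p(\nu^{*k},\mu)<\infty$, so Theorem \ref{theoremCLTLIL} applies.
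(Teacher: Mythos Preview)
Your lower bound is correct and takes a genuinely different route from the paper. The paper argues geometrically: by the transference principle, bad approximability of the dual forms implies that the lattice orbit $\{\sum n_i\alpha_i:|n_i|\le L\}$ is an $R$-net with $R\ll L^{-r/d}$, and then a test function of the form $\mathrm{dist}(x,A)^p$ gives the lower bound on $W_p$ (Lemmas~\ref{wassersteinlowerbound} and~\ref{Rnetlemma}). Your Fourier argument via Dirichlet's theorem and a single trigonometric test function is shorter and uses no Diophantine input beyond Dirichlet, so it in fact gives the lower bound for \emph{any} system $\alpha_1,\dots,\alpha_r$, not only badly approximable ones.

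For the upper bound your outline is correct in its bones (Proposition~\ref{ErdosTuran} with $H\asymp k^{r/(2d)}$, then control of $\sum_h|\widehat\nu(h)|^{2k}/|h|^2$), but you misdiagnose the crux. Your step~(ii), the logarithm-free count $\#\{h:\|h\|_\infty\le N,\ \max_i\|2\alpha_i\cdot h\|_{\mathbb R/\mathbb Z}\le t\}\ll\max(1,N^dt^r)$, does \emph{not} require the number-field lattice to sit in a compact torus orbit; it follows directly from bad approximability by an elementary spacing argument. Indeed, if $h\neq h'$ with $\|h\|_\infty,\|h'\|_\infty\le N$, then $\max_i\|2\alpha_i\cdot(h-h')\|_{\mathbb R/\mathbb Z}\gg N^{-d/r}$ by~(i), so the points $(\|2\alpha_i\cdot h\|)_{i}$ are $\gg N^{-d/r}$-separated in $[-1/2,1/2)^r$, whence at most $\ll(tN^{d/r}+1)^r$ of them lie in $[-t,t]^r$. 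This is exactly how the paper proceeds (proof of Theorem~\ref{rankrlattice2}): it partitions $[-1/2,1/2)^r$ into cubes of side $\asymp m^{-d/r}$, observes there is at most one point per cube, and sums a single geometric-type series, arriving at $B_m\ll\exp(-ak/\psi(2m)^2)$ and then at the bound $\ll 1/H^2$ by summation by parts. So the algebraic hypothesis is used \emph{only} to produce a badly approximable system (your step~(i)); your invocation of Minkowski's second theorem and compact torus orbits is unnecessary, and the ``discrepancy-type estimates available for free from $M$ badly approximable'' are in fact enough.
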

The entries of $M$ are explicitly computable rational functions of the algebraic integers $a_1, a_2, \dots, a_r$. The conjugates $a_{r+1}, a_{r+2}, \dots, a_{r+d}$ only appear in the proof of the fact that the system of vectors $\alpha_1, \alpha_2, \dots, \alpha_r$ is badly approximable; see Section \ref{speedsection}.

Theorem \ref{rankrlattice1} should be compared with the rate of convergence in the $p$-Wasserstein metric in the classical CLT setting. Given $p \ge 1$ and i.i.d.\ real-valued random variables $\zeta_1, \zeta_2, \dots$ with mean zero, unit variance and $\E |\zeta_1|^{p+2}< \infty$, the distance of $k^{-1/2} \sum_{j=1}^k \zeta_j$ from the standard normal distribution in $W_p$ is $O(k^{-1/2})$, and this is best possible. For the proof and more general results see Bobkov \cite{BOB1} and references therein.

The notation used in the paper and definitions are given in Section \ref{notation}. The main result about random walks on general compact groups is stated in Section \ref{pholder}, and proved in Section \ref{proofsection}. The main results about the rate of convergence of random walks on the torus are stated in Section \ref{speedsection}; their proof, together with the proof of Propositions \ref{sigmaequalzero} and \ref{ErdosTuran} are given in Section \ref{torussection}.

\section{Main results}

\subsection{Definitions and notation}\label{notation}

For the rest of the paper $G$ denotes a compact group whose topology is generated by a bi-invariant metric $d$, with normalized Haar measure $\mu$. We write $\nu_1 * \nu_2$ for the convolution of the Borel probability measures $\nu_1$ and $\nu_2$ on $G$, and $\nu^{*k}$ for the $k$-fold convolution of $\nu$. Recall that $\nu * \mu = \mu * \nu = \mu$ for any $\nu$.

Let $\mathcal{F}_p$ ($0<p \le 1$) denote the set of all $p$-H\"older functions $f:G \to \mathbb{R}$ such that $\int_G f \, \mathrm{d} \mu =0$. The optimal $p$-H\"older constant of $f$ will be denoted by
\[ \| f \|_{p-\textrm{H\"old}} := \sup_{\substack{x,y \in G \\ x \neq y}} \frac{|f(x)-f(y)|}{d(x,y)^p} . \]
Observe that $\| \cdot \|_{p-\textrm{H\"old}}$ is a norm on $\mathcal{F}_p$. We simply write $\| f \|_q$ for the $L^q(G,\mu)$-norm of $f$ ($1 \le q \le \infty$).

The $p$-Wasserstein distance ($0<p \le 1$) of two Borel probability measures $\nu_1$ and $\nu_2$ on $G$ is defined as\footnote{The exponent $1/p$ is missing to ensure that $W_p$ is a metric, i.e.\ it satisfies the triangle inequality.}
\[ W_p (\nu_1, \nu_2 ) := \inf_{\vartheta \in \mathrm{Coup} (\nu_1, \nu_2 )} \int_{G \times G} d(x,y)^p \, \mathrm{d}\vartheta (x,y) , \]
where $\mathrm{Coup}(\nu_1, \nu_2)$ denotes the set of couplings of $\nu_1$ and $\nu_2$, i.e.\ the set of Borel probability measures $\vartheta$ on $G \times G$ with marginals $\vartheta (B \times G)=\nu_1 (B)$ and $\vartheta (G \times B) = \nu_2 (B)$ ($B \subseteq G$ Borel). By the Kantorovich duality theorem,
\[ W_p (\nu_1, \nu_2 ) = \sup_{\substack{f \in \mathcal{F}_p \\ \| f \|_{p-\textrm{H\"old}} \le 1}} \left| \int_G f \, \mathrm{d} \nu_1 - \int_G f \, \mathrm{d} \nu_2 \right| . \]
This duality is usually stated for $p=1$, i.e.\ for Lipschitz functions; to see the general case simply note that $d(x,y)^p$ is also a metric on $G$ generating the same topology, and apply Kantorovich duality for Lipschitz functions in the $d(x,y)^p$ metric. Observe that for any Borel probability measures $\nu_1$, $\nu_2$ and $\nu$ on $G$ we have $W_p(\nu_1 * \nu, \nu_2 * \nu) \le W_p (\nu_1, \nu_2)$.

A $G$-valued random variable is a Borel measurable map $X$ from a probability space to $G$; the distribution of $X$ is the Borel probability measure on $G$ which assigns $\Pr (X \in B)$ to a Borel set $B \subseteq G$. A $G$-valued random variable is called uniformly distributed if its distribution is the Haar measure $\mu$. If the $G$-valued random variables $X$ and $Y$ are independent, then the distribution of $XY$ is the convolution of the distributions of $X$ and $Y$.

Throughout $| \cdot |$ denotes the Euclidean norm of a vector in $\mathbb{R}^d$ or the cardinality of a set. We use $\langle x,y \rangle=\sum_{i=1}^d x_i y_i$ for the usual scalar product on $\mathbb{R}^d$. The $d$-dimensional torus $\mathbb{R}^d / \mathbb{Z}^d$ is identified by $[0,1)^d$ or $[-1/2,1/2)^d$ the usual way, as are functions on $\mathbb{R}^d/\mathbb{Z}^d$ by $\mathbb{Z}^d$-periodic functions on $\mathbb{R}^d$. The normalized Haar measure $\mu$ becomes the Lebesgue measure on a unit cube via this identification; we simply write $\mathrm{d}x$ in integrals with respect to the Lebesgue measure. Given $x,y \in \mathbb{R}^d$, we define the Euclidean distance from the point $x+\mathbb{Z}^d$ to the point $y+\mathbb{Z}^d$ on the torus as $\| x-y \|_{\mathbb{R}^d/\mathbb{Z}^d}:=\min_{m \in \mathbb{Z}^d} |x-y-m|$. In particular, $\| \cdot \|_{\mathbb{R}/\mathbb{Z}}$ is the distance from the nearest integer function. We use the maximum norm $\| h \|_{\infty}=\max_{1 \le j \le d} |h_j|$ for lattice points $h \in \mathbb{Z}^d$. The Fourier coefficients of a function $f: \mathbb{R}^d / \mathbb{Z}^d \to \mathbb{R}$ are denoted by $\widehat{f}(h)=\int_{[0,1)^d} e^{-2 \pi i \langle h,x \rangle} f(x) \, \mathrm{d}x$, $h \in \mathbb{Z}^d$; those of a Borel probability measure $\nu$ by $\widehat{\nu}(h)=\int_{[0,1)^d} e^{-2 \pi i \langle h,x \rangle} \, \mathrm{d}\nu (x)$, $h \in \mathbb{Z}^d$.

Given a (deterministic) function $E(t)$, $t\in [0,\infty)$, we define two stochastic processes $Y(t)$ and $Z(t)$ in the Skorokhod space $D[0,\infty)$ to be $o(E(t))$-equivalent the same way as in \cite{BO}. Somewhat informally, this notion simply means that $Y(t)=Z(t)+o(E(t))$ as $t \to \infty$ a.s., except we allow a process to be replaced by another process defined on a new probability space with the same distribution in $D[0,\infty)$.

For general facts about topological groups we refer to Hewitt and Ross \cite{HR}. The $p$-Wasserstein metric and Kantorovich duality are related to the theory of optimal transportation, see Villani \cite{V}. Simultaneous Diophantine approximation is covered in detail in Cassels \cite{CA}, Schmidt \cite{WSCH} and Sprindzuk \cite{SPR}.

\subsection{$p$-H\"older test functions on a compact group}\label{pholder}

Our main result is an almost sure approximation of the sum $\sum_{k=1}^N f(S_k)$ by a Wiener process. For any $f \in \mathcal{F}_p$ let
\begin{equation}\label{Cfnu}
C(f,\nu ) = \E f(U)^2 + 2 \sum_{k=1}^{\infty} \E f(U)f(US_k),
\end{equation}
where $U$ is a uniformly distributed $G$-valued random variable, independent of $X_1, X_2, \dots$.

\begin{thm}\label{theoremWIENER} Assume that the conditions of Theorem \ref{theoremCLTLIL} hold, and let $f \in \mathcal{F}_p$. Then the series in \eqref{Cfnu} is absolutely convergent, and $C(f,\nu) \ge 0$.
\begin{enumerate}
\item[(i)] If $C(f, \nu)>0$, then there exists a (deterministic) nondecreasing positive function $\sigma (t)$, $t \in [0,\infty)$ with $\sigma (t)^2=C(f, \nu) t +o(t)$ such that the processes $\sum_{k \le t} f(S_k)$ and $W(\sigma(t)^2)$ are $o\left( t^{5/12+\varepsilon} \right)$-equivalent in the Skorokhod space $D[0,\infty)$ for any $\varepsilon >0$, where $W(t)$, $t\in [0,\infty )$ is a standard Wiener process.
\item[(ii)] If $C(f, \nu)=0$, then
\[ \frac{\sum_{k \le t} f(S_k)}{\sqrt{t}} \overset{d}{\to} 0 \qquad \textrm{and} \qquad \frac{\sum_{k \le t} f(S_k)}{\sqrt{t \log \log t}} \to 0 \quad \mathrm{a.s.} \]
\end{enumerate}
\end{thm}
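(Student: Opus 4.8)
The plan is to implement the perturbation method described in the introduction. First I would fix $f \in \mathcal{F}_p$, assume without loss of generality that $\|f\|_{p\text{-H\"old}} \le 1$, and split the index set $[1,N]$ into long blocks and short "spacer" blocks. Concretely, choose a slowly growing block length $L = L(k)$ and write $S_k = S_{I_1} S_{J_1} S_{I_2} S_{J_2} \cdots$, where the $I_m$ are long blocks of length of order a power of $m$ and the $J_m$ are short spacer blocks of length $\ell_m$; on each spacer block we replace the product $S_{J_m} = \prod_{j \in J_m} X_j$ by an independent uniformly distributed variable $U_m$. By the Kantorovich-duality form of $W_p$ recorded in Section \ref{notation} and the coupling characterization ($W_p(\nu_1,\nu_2)<\varepsilon$ yields a coupling with $\E\, d(X,Y)^p<\varepsilon$), the total cost of all these replacements, measured by $\sum_m \E\, d(S_{J_m}, U_m)^p$, is controlled by $\sum_m W_p(\nu^{*\ell_m},\mu)$, which is summable (in fact as small as we like) provided the $\ell_m \to \infty$ fast enough, using the hypothesis $\sum_k W_p(\nu^{*k},\mu)<\infty$. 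Because $f$ is $p$-H\"older and the metric is bi-invariant, replacing one factor in the product $S_k$ changes $f(S_k)$ by at most $d(\cdot,\cdot)^p$ of the replaced factor; so the cumulative error in $\sum_{k\le N} f(S_k)$ from all perturbations is a.s.\ of lower order (after a Borel--Cantelli / Kronecker-type argument).

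The key structural point is that once a uniform variable $U_m$ is inserted, everything to its right is, conditionally on the past, again distributed as a fresh random walk started from a uniform point, and $f(US_k) = f(U \cdot(\text{product}))$ has the same law regardless of the starting uniform point by invariance of $\mu$. This is exactly what manufactures independence: the perturbed sums over consecutive long blocks become independent (or form a martingale-difference-type array), with the per-block contributions being partial sums of a stationary sequence of the form $f(U S_1), f(U S_2), \dots$. I would then compute the variance of a single long-block contribution: summing $\E f(US_i) f(US_j) = \E f(U) f(U S_{|i-j|})$ over the block and letting the block length grow gives the series in \eqref{Cfnu}, and absolute convergence of that series follows from the bound $|\E f(U) f(U S_k)| \le \|f\|_\infty \cdot \E|f(U) - \E f| \ll W_p(\nu^{*k},\mu)$ (centering $f$ along the $S_k$ factor and using $p$-H\"older-ness), hence summable; nonnegativity of $C(f,\nu)$ is automatic since it is a limit of variances. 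This identifies the asymptotic variance as $C(f,\nu)\,t + o(t)$ and lets me define $\sigma(t)$.

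With the perturbed sum written as a sum of independent (block) contributions with finite variances and good higher-moment bounds (the summands are bounded since $f$ is continuous on the compact group $G$), part (i) follows from a Koml\'os--Major--Tusn\'ady / Strassen-type almost-sure invariance principle for sums of independent bounded random variables: one embeds the block sums into a Wiener process with error controlled by the moment bounds and the block-length growth, and then optimizes the block-length exponent to get the $o(t^{5/12+\varepsilon})$ rate, exactly as in the companion paper \cite{BO}. The error from the perturbation, shown above to be of lower order, is absorbed into this. Part (ii), the case $C(f,\nu)=0$, then follows because the perturbed sum has variance $o(t)$, so its normalized version tends to $0$ in probability and (via a maximal inequality plus Borel--Cantelli along a geometric subsequence) a.s.\ after division by $\sqrt{t\log\log t}$, and again the perturbation error is negligible. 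The main obstacle I expect is bookkeeping: choosing the spacer lengths $\ell_m$ and block lengths so that simultaneously (a) the perturbation cost $\sum_m W_p(\nu^{*\ell_m},\mu)$ and its a.s.\ tail are negligible at the scale $t^{5/12+\varepsilon}$, (b) the blocks are long enough that the single-block variance has already converged to $C(f,\nu)\cdot(\text{block length})$ up to a negligible error, and (c) the KMT embedding error over the independent block sums stays within $o(t^{5/12+\varepsilon})$; reconciling these three competing constraints, while carefully handling the conditioning that produces the independence, is the technical heart of the argument.
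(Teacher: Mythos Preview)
Your overall architecture matches the paper's: a block decomposition into long blocks $J_i$ and short spacer blocks $H_i$, replacement of each spacer product by a coupled uniform variable, independence of the resulting long-block sums $Y_i^*$, and finally Strassen's invariance principle (not KMT) on the $Y_i^*$. The variance computation and the identification of $C(f,\nu)$ are also essentially as in the paper, though your displayed bound $|\E f(U)f(US_k)|\le\|f\|_\infty\cdot\E|f(U)-\E f|$ is not correct as written; the right argument is to set $g(x)=\int_G f(u)f(ux)\,\mathrm{d}\mu(u)$, observe $\int g\,\mathrm{d}\mu=0$ and $\|g\|_{p\text{-H\"old}}\le\|f\|_1\|f\|_{p\text{-H\"old}}$, and apply Kantorovich duality to get $|\E g(S_k)|\le\|g\|_{p\text{-H\"old}}\Delta_k$.

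There is, however, a genuine gap in your control of the perturbation error $Y_i-Y_i^*$. Your pointwise bound ``replacing one factor changes $f(S_k)$ by at most $d(\cdot,\cdot)^p$'' gives only $|Y_i-Y_i^*|\le |J_i|\,d(T_{H_i},U_{H_i})^p$, hence $\E(Y_i-Y_i^*)^2\ll |J_i|^2\,\Delta_{|H_i|}$ (using boundedness of $d$). With this \emph{quadratic} dependence on $|J_i|$, no choice of block sizes simultaneously satisfies your constraints (a)--(c): if the spacers are short enough that the spacer-sum $\sum Z_i^*$ and the Strassen error stay within $o(t^{5/12+\varepsilon})$ (this forces $|J_i|\sim i^{1/2}$, $|H_i|\sim i^{1/4}$, and in Strassen's theorem $\theta>2/3$), then summing $|J_i|^2\Delta_{|H_i|}$ using only $k\Delta_k\to 0$ yields an error that is far too large. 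The paper's essential extra ingredient is a preliminary variance bound (its Lemma~\ref{varianceupperbound}),
\[
\E\Big(\sum_{k=1}^N f(S_k)\Big)^2 \ \ll\ \|f\|_1\,\|f\|_{p\text{-H\"old}}\,N+\|f\|_{p\text{-H\"old}}^2,
\]
applied to the function $g(z)=f(xyz)-f(xy'z)$; bi-invariance gives $\|g\|_1\le d(y,y')^p$, so after integrating over the coupling one obtains $\E(Y_i-Y_i^*)^2\ll \Delta_{|H_i|}|J_i|+1$, i.e.\ \emph{linear} in $|J_i|$. One also has to control the cross terms $\E(Y_i-Y_i^*)(Y_j-Y_j^*)$ separately (conditioning on the past and using $|\E f(bC_k)|\le\Delta_{k-\max J_i}$), and then a Rademacher--Menshov maximal inequality along dyadic blocks converts the resulting second-moment bound into the required a.s.\ estimate. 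Without this sharper $L^2$ estimate, your Borel--Cantelli/Kronecker step does not go through at the scale $t^{5/12+\varepsilon}$.
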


\begin{remark} As the proof will clearly show, the error term in $\sigma (t)^2$ is related to the tails of the series $\sum_{k=1}^{\infty} W_p (\nu^{*k}, \mu)$. For instance, if $W_p (\nu^{*k}, \mu) \ll k^{-(1+c)}$ for some $0<c \le 1/2$, then $\sigma (t)^2 = C(f, \nu ) t +O(t^{1-c/3})$. In this case $\sum_{k \le t} f(S_k)$ is $o(t^{1/2-c/6+\varepsilon})$-equivalent to $\sqrt{C(f,\nu)} W(t)$ for any $\varepsilon >0$.
\end{remark}

It was not our intention to find the optimal error term in Theorem \ref{theoremWIENER}; indeed, it is likely that the exponent $5/12$ can be improved. The point is that $5/12 < 1/2$, and so Theorem \ref{theoremCLTLIL} with $\sigma = \sqrt{C(f,\nu)}$ follows immediately from Theorem \ref{theoremWIENER} and classical limit theorems for Wiener processes. Furthermore, the almost sure asymptotics and the limit distribution of continuous functionals of the process $\sum_{k \le t} f(S_k)$ also follow. Of course we could also use the piecewise linear functions $\sum_{k \le \lfloor t \rfloor} f(S_k)+ (t-\lfloor t \rfloor ) f(S_{\lfloor t \rfloor +1})$ instead of the step functions $\sum_{k \le t} f(S_k)$; in that case the same results hold in the space of continuous functions $C[0,\infty)$.

The normalizing factor $C(f, \nu)$ defined in \eqref{Cfnu} is the same as in the first part of this paper \cite{BO}. For a brief comparison with general Markov chain theory we refer to the same paper; in particular, note that the same factor $C(f, \nu)$ appears in the variance in the classical CLT and LIL for Markov chains.

\subsection{Rate of convergence in the $p$-Wasserstein metric}\label{speedsection}

Let $d \ge 1$ and consider the additive group of the $d$-dimensional torus $\mathbb{R}^d/\mathbb{Z}^d$ with the Euclidean metric and normalized Haar measure $\mu$. Let $r \ge 1$ and $\alpha_1, \alpha_2, \dots, \alpha_r \in \mathbb{R}^d$. Further, let $\xi_1, \xi_2, \dots, \xi_r$ be $\mathbb{Z}$-valued random variables, and let $I$ be a $\{ 1,2,\dots, r \}$-valued random variable with $\xi_1, \xi_2, \dots, \xi_r, I$ independent. Consider the i.i.d.\ sequence $X_1, X_2, \dots$ of $\mathbb{R}^d/\mathbb{Z}^d$-valued random variables where $X_1=\xi_I \alpha_I+\mathbb{Z}^d$; that is, the distribution of $X_1$ is
\[ \nu=\sum_{i=1}^r \sum_{n \in \mathbb{Z}} \Pr (I=i) \Pr (\xi_i =n) \delta_{n\alpha_i+\mathbb{Z}^d}, \]
where $\delta_{a+\mathbb{Z}^d}$ is the Dirac measure concentrated on the point $a+\mathbb{Z}^d$ on the torus. We thus have a random walk on the rank $r$ lattice
\[ \left\{ \sum_{i=1}^r n_i \alpha_i + \mathbb{Z}^d \,\, : \,\, n_1, n_2, \dots, n_r \in \mathbb{Z} \right\} \subset \mathbb{R}^d / \mathbb{Z}^d, \]
and to generate an elementary step of the walk we first randomly choose one of the basis vectors $\alpha_i$ of the lattice, and then move forward by a random integral multiple of $\alpha_i$. The random walk in Theorem \ref{rankrlattice1} is a special case, when $I$ is uniformly distributed on $\{ 1,2,\dots, r \}$, and $\xi_1, \xi_2, \dots, \xi_r$ are Bernoulli random variables taking the values $\pm 1$ with probability $1/2$ each.

It is not surprising that the rate of convergence of this random walk to uniformity is sensitive to the Diophantine properties of the given lattice. We will work under the assumption
\begin{equation}\label{diophantinecond}
\inf_{\substack{h \in \mathbb{Z}^d \\ h \neq 0}} \left( \max_{1 \le i \le r} \left\| \langle h, \alpha_i \rangle \right\|_{\mathbb{R}/\mathbb{Z}} \cdot \psi (\| h \|_{\infty}) \right) >0,
\end{equation}
where $\psi (x)$ is a nondecreasing positive function on $[1,\infty)$.

Dirichlet's theorem \cite[p.\ 28]{WSCH} states that \eqref{diophantinecond} cannot hold with a function $\psi(x)=o(x^{d/r})$. A system of vectors $\alpha_1, \alpha_2, \dots, \alpha_r$ is called \textit{badly approximable} if \eqref{diophantinecond} is satisfied with $\psi (x)=x^{d/r}$. It is known that badly approximable systems of vectors exist for any $d,r \ge 1$; in fact, the system used in Theorem \ref{rankrlattice1} is badly approximable \cite[p.\ 43--45]{WSCH}. A system of vectors is called a \textit{Roth system} if every coordinate of every vector is algebraic, and \eqref{diophantinecond} holds with $\psi (x)=x^{d/r+\varepsilon}$ for any $\varepsilon>0$. Schmidt \cite[p.\ 157]{WSCH} gave a simple necessary and sufficient condition for being a Roth system in terms of the rank of certain linear forms on rational subspaces. For instance, in the special case $d=1$ a system $\alpha_1, \alpha_2, \dots, \alpha_r$ is a Roth system if and only if $1,\alpha_1, \alpha_2, \dots, \alpha_r$ are algebraic and linearly independent over the rationals. The corresponding metric result is the Khintchine--Groshev theorem \cite{SPR}. Assume $\psi(x)=x^{d/r} L(x)^{1/r}$ with a nondecreasing positive function $L(x)$. If the series $\sum_{k=1}^{\infty}1/(k L(k))$ is convergent (resp.\ divergent), then \eqref{diophantinecond} is satisfied by almost every (resp.\ almost no) system $(\alpha_1, \alpha_2, \dots, \alpha_r) \in \mathbb{R}^{rd}$ in the sense of the Lebesgue measure.
\begin{thm}\label{rankrlattice2} Assume that $\xi_1, \xi_2, \dots, \xi_r$ are nondegenerate, $\Pr (I=i)>0$ for all $1 \le i \le r$, and \eqref{diophantinecond} is satisfied with a positive function $\psi (x)$ such that $\psi(x)x^{-d/r}$ is nondecreasing. For any integer $k \ge 1$,
\[ W_1 (\nu^{*k}, \mu ) \ll \frac{1}{\psi^{-1}(\sqrt{k})}, \]
where $\psi^{-1}(y)= \sup \{ x \ge 1 : \psi (x) \le y \}$ is the generalized inverse function of $\psi$. The implied constant depends only on the distributions of $\xi_1, \xi_2, \dots, \xi_r, I$, the value of the infimum in \eqref{diophantinecond}, $d$ and $r$.
\end{thm}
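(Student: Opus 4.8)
The plan is to combine the Berry--Esseen inequality of Proposition~\ref{ErdosTuran} with a pointwise decay estimate for $\widehat{\nu}$ and a lattice-point count, and to take the cutoff $H$ of order $\psi^{-1}(\sqrt{k})$. Since $\widehat{\nu^{*k}}(h)=\widehat{\nu}(h)^k$ while $\widehat{\mu}(h)=0$ for $h\neq 0$, Proposition~\ref{ErdosTuran} gives
\[ W_1(\nu^{*k},\mu)\le \frac{6d}{H}+\frac{d^{1/2}}{2\pi}\Bigg(\sum_{0<\|h\|_\infty<H}\frac{|\widehat{\nu}(h)|^{2k}}{|h|^2}\Bigg)^{1/2}, \]
so with $H=\lfloor\psi^{-1}(\sqrt{k})\rfloor$ (the statement being trivial when this is bounded, since $W_1\le d^{1/2}/2$ always) everything reduces to the estimate $\sum_{0<\|h\|_\infty<H}|\widehat{\nu}(h)|^{2k}/|h|^2\ll H^{-2}$.

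For the Fourier decay, observe that $\widehat{\nu}(h)=\sum_{i=1}^r\Pr(I=i)\,\E e^{-2\pi i\xi_i\langle h,\alpha_i\rangle}$ is a convex combination of characteristic functions of nondegenerate $\mathbb{Z}$-valued variables. Picking two atoms $a_i\neq b_i$ of each $\xi_i$ and using $\sin^2(\pi x)\ge 4\|x\|_{\mathbb{R}/\mathbb{Z}}^2$ together with the triangle inequality yields $|\widehat{\nu}(h)|\le 1-c\,\delta^*(h)^2$, hence $|\widehat{\nu}(h)|^{2k}\le e^{-2ck\,\delta^*(h)^2}$, where $\delta^*(h):=\max_i\|m_i\langle h,\alpha_i\rangle\|_{\mathbb{R}/\mathbb{Z}}$ with $m_i:=a_i-b_i$ and $c>0$ depends only on the distributions of $\xi_1,\dots,\xi_r,I$. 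It is $\delta^*$, not $\delta(h):=\max_i\|\langle h,\alpha_i\rangle\|$, that the Fourier transform controls --- this is forced when some $\xi_i$ is supported on a proper coset of $g\mathbb{Z}$. To transfer the hypothesis to $\delta^*$, apply \eqref{diophantinecond} to the vector $Lh$, where $L$ is a common multiple of $|m_1|,\dots,|m_r|$: since $\langle Lh,\alpha_i\rangle=(L/m_i)\cdot m_i\langle h,\alpha_i\rangle$ with $L/m_i\in\mathbb{Z}$, one gets $\delta^*(h)\ge c_0'/\psi(L\|h\|_\infty)$, i.e.\ the modified system obeys a Diophantine condition with $\psi^*(x):=\psi(Lx)$, which still satisfies $\psi^*(x)x^{-d/r}$ nondecreasing and $\psi^{*-1}(\sqrt{k})=\psi^{-1}(\sqrt{k})/L\asymp\psi^{-1}(\sqrt{k})$. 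Thus we may assume $\delta^*(h)=\delta(h)$ obeys \eqref{diophantinecond} directly.

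The counting input is elementary: if $h\neq h'$ are nonzero with $\|h\|_\infty,\|h'\|_\infty<N$ and $\delta(h),\delta(h')\le\delta$, then $\|h-h'\|_\infty<2N$ and $\delta(h-h')\le 2\delta$, so \eqref{diophantinecond} forces $2\delta\ge c_0/\psi(2N)$; hence $\{0<\|h\|_\infty<N:\delta(h)\le\delta\}$ meets every $\|\cdot\|_\infty$-box of side $\asymp\psi^{-1}(c_0/(2\delta))$ in at most one point, and is empty when $N$ is smaller than that, so its cardinality is $\ll(N/\psi^{-1}(c_0/(2\delta)))^d$ whenever this is $\ge 1$. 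Now bound the sum dyadically: over $h$ with $\|h\|_\infty\in[2^m,2^{m+1})$ and $\delta(h)\in[2^{-j-1},2^{-j})$ we have $|h|^{-2}\le 4^{-m}$, $|\widehat{\nu}(h)|^{2k}\le e^{-ck4^{-j}/2}$, and at most $\ll(2^m/s_j)^d$ lattice points with $s_j:=\psi^{-1}(c_02^{j-1})$; this block is empty unless $2^m\gg s_j$ and $2^j\ll\sqrt{k}$ (so $j$ runs up to some $j_{\max}$ with $s_{j_{\max}}\asymp H$). Summing over $m$ gives a factor $\ll H^{d-2}$ for $d\ge 3$, $\ll 1+\log(H/s_j)$ for $d=2$, and $\ll 1/s_j$ for $d=1$; writing $j=j_{\max}-\ell$, the exponential $e^{-ck4^{-j}/2}\ll e^{-c'4^{\ell}}$ decays superexponentially in $\ell$ while monotonicity of $\psi(x)x^{-d/r}$ gives $s_j\gg 2^{-\ell r/d}H$, i.e.\ $s_j^{-d}\ll 2^{\ell r}H^{-d}$. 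In each of the three cases the resulting sum over $\ell$ converges and the whole double sum comes out $\ll H^{-2}$ with constants depending only on $c$, $L$, $c_0$, $d$, $r$. Feeding this into Proposition~\ref{ErdosTuran} gives $W_1(\nu^{*k},\mu)\ll 1/H\ll 1/\psi^{-1}(\sqrt{k})$.

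The main obstacle, and where the argument could go wrong, is this double dyadic estimate: the crude bound $\delta(h)\ge c_0/\psi(\|h\|_\infty)$ by itself is hopelessly weak for $d\ge 2$ --- it does not even give $W_1\to 0$, since it treats all $\sim H^d$ lattice points of size $\asymp H$ as if $\delta(h)$ were at the critical scale $\asymp k^{-1/2}$ --- so one must really use both that only $O(1)$ of the $h$ with $\|h\|_\infty<H$ have $\delta(h)$ below that scale and that this scale is exactly the one, $s_j\asymp H$, where the exponential factor is still of constant order; otherwise spurious logarithmic or polynomial-in-$k$ factors creep in. The only other slightly delicate point is the reduction above dealing with the sublattice structure of the supports of the $\xi_i$.
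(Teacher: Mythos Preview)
Your proof is correct and follows the same overall strategy as the paper's: apply Proposition~\ref{ErdosTuran} with cutoff $H\asymp\psi^{-1}(\sqrt{k})$, establish the decay $|\widehat{\nu}(h)|\le 1-c\,\delta^*(h)^2$ from the lattice structure of the $\xi_i$, transfer the Diophantine hypothesis through the span (your $m_i,L$ play the role of the paper's maximal spans $D_i$ and $D=\mathrm{lcm}\,D_i$), and then exploit the separation of the points $(\langle h,\alpha_i\rangle)_i\bmod 1$ forced by \eqref{diophantinecond} to bound the Fourier sum.

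The one genuine difference is in how that last sum is organized. You run a double dyadic decomposition in the \emph{source} space, slicing in both $\|h\|_\infty$ and $\delta(h)$; this costs a case split on $d$ (the sum $\sum_m 2^{m(d-2)}$ behaves differently for $d=1$, $d=2$, $d\ge 3$) and some bookkeeping in the $\ell$-sum. The paper instead works in the \emph{target} space $\mathbb{R}^r$: for each $m$ it tiles $[-1/2,1/2)^r$ by cubes of side $K/\psi(2m)$, notes that the points $g(h)$ with $\|h\|_\infty<m$ occupy distinct cubes (and none the central one), and thereby gets $B_m:=\sum_{0<\|h\|_\infty<m}e^{-ck\,\delta(h)^2}\ll e^{-ak/\psi(2m)^2}$ in a single estimate; one Abel summation in $m$ then finishes uniformly in $d$. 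Both routes invoke the monotonicity of $\psi(x)x^{-d/r}$ at exactly the same point---you to compare $s_j$ with $H$, the paper to get $k/\psi(2m)^2\ge(DH/m)^{2d/r}$---so neither is more general; the paper's packaging is just a bit cleaner.
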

Theorem \ref{rankrlattice2} with $\psi (x)=x^{d/r}$ yields the estimate
\[ W_1 (\nu^{*k}, \mu ) \ll k^{-r/(2d)}, \]
valid whenever the system of vectors $\alpha_1, \alpha_2, \dots, \alpha_r$ is badly approximable; in particular, the upper bound in Theorem \ref{rankrlattice1} follows. If \eqref{diophantinecond} is satisfied with $\psi (x)=x^{d/r+\varepsilon}$ for any $\varepsilon >0$, then Theorem \ref{rankrlattice2} gives
\[ W_1 (\nu^{*k}, \mu ) \ll k^{-r/(2d)+\varepsilon} \]
for any $\varepsilon >0$. This estimate holds for any Roth system $\alpha_1, \alpha_2, \dots, \alpha_r$, and also for almost every $(\alpha_1, \alpha_2, \dots, \alpha_r) \in \mathbb{R}^{rd}$.

We conjecture that the estimate is tight whenever $\xi_1, \xi_2, \dots, \xi_r$ have finite variance, possibly under certain regularity assumptions on the distributions of $\xi_1, \xi_2, \dots, \xi_r, I$. We were only able to prove this conjecture for badly approximable systems.
\begin{thm}\label{rankrlattice3} Assume that $\Pr (I=i) \E \xi_i^2 \le B$ for all $1 \le i \le r$ with some constant $B>0$. If the system of vectors $\alpha_1, \alpha_2, \dots, \alpha_r \in \mathbb{R}^d$ is badly approximable, then for any $0<p \le 1$  and any integer $k \ge 1$,
\[ W_p (\nu^{*k}, \mu )^{1/p} \gg k^{-r/(2d)} \]
with an implied constant depending only on $\alpha_1, \alpha_2, \dots, \alpha_r$ and $B$.
\end{thm}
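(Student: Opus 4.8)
The plan is to produce a \emph{lower} bound for $W_p(\nu^{*k},\mu)$ by exhibiting a single well-chosen $p$-H\"older test function $f$ and evaluating $\int_G f\,\mathrm d\nu^{*k}-\int_G f\,\mathrm d\mu$. By Kantorovich duality it suffices to find, for each $k$, a function $f$ with $\|f\|_{p\text{-H\"old}}\le 1$ whose integral against $\nu^{*k}$ is bounded below by a constant times $k^{-pr/(2d)}$; one naturally takes a trigonometric polynomial built from a single frequency $h\in\mathbb Z^d$. Concretely, since $\widehat{\nu^{*k}}(h)=\widehat\nu(h)^k$, and since each $\xi_i$ is a $\mathbb Z$-valued random variable, one computes
\[
\widehat\nu(h)=\sum_{i=1}^r \Pr(I=i)\,\E\!\left[e^{2\pi i \xi_i\langle h,\alpha_i\rangle}\right]=\sum_{i=1}^r \Pr(I=i)\,\varphi_i(\langle h,\alpha_i\rangle),
\]
where $\varphi_i$ is the characteristic function of $\xi_i$. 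The key elementary estimate is that for a nondegenerate integer-valued random variable $\xi$ with $\E\xi^2<\infty$ one has $|\varphi_\xi(t)|\le 1-c\,\|t\|_{\mathbb R/\mathbb Z}^2$ for $\|t\|_{\mathbb R/\mathbb Z}$ small and, more usefully for the lower bound direction, $\mathrm{Re}\,\varphi_\xi(t)\ge 1-C(\E\xi^2)\,\|t\|_{\mathbb R/\mathbb Z}^2$ always. Hence, using $\Pr(I=i)\E\xi_i^2\le B$,
\[
\mathrm{Re}\,\widehat\nu(h)\ge 1-C\sum_{i=1}^r \Pr(I=i)(\E\xi_i^2)\,\|\langle h,\alpha_i\rangle\|_{\mathbb R/\mathbb Z}^2\ge 1-CB\sum_{i=1}^r\|\langle h,\alpha_i\rangle\|_{\mathbb R/\mathbb Z}^2\ge 1-CBr\max_i\|\langle h,\alpha_i\rangle\|_{\mathbb R/\mathbb Z}^2.
\]

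Now the Diophantine input enters. Because the system is badly approximable, $\max_i\|\langle h,\alpha_i\rangle\|_{\mathbb R/\mathbb Z}\le \|h\|_\infty^{-d/r}$ \emph{fails} to go to zero too fast — more precisely, by Dirichlet's theorem there \emph{exists} $h\neq 0$ with $\|h\|_\infty\le H$ and $\max_i\|\langle h,\alpha_i\rangle\|_{\mathbb R/\mathbb Z}\le c_0 H^{-d/r}$; combining with bad approximability one gets $\max_i\|\langle h,\alpha_i\rangle\|_{\mathbb R/\mathbb Z}\asymp H^{-d/r}$ for a suitable $h$ of norm $\asymp H$. Choosing $H\asymp k^{r/(2d)}$ makes $\max_i\|\langle h,\alpha_i\rangle\|_{\mathbb R/\mathbb Z}^2\asymp k^{-1}$, so that $\mathrm{Re}\,\widehat\nu(h)\ge 1-C'/k$ for this $h$, and therefore $|\widehat{\nu^{*k}}(h)|=|\widehat\nu(h)|^k\ge (1-C'/k)^k\ge c_1>0$, a constant bounded away from zero uniformly in $k$. (A little care is needed to also control the imaginary part / argument of $\widehat\nu(h)$, or one simply works with $|\widehat\nu(h)|^2=\widehat\nu(h)\overline{\widehat\nu(h)}\ge (\mathrm{Re}\,\widehat\nu(h))^2$.) Then take the real test function $f(x)=\mathrm{Re}\big(\overline{\widehat{\nu^{*k}}(h)}\,e^{2\pi i\langle h,x\rangle}\big)/\big(|\widehat{\nu^{*k}}(h)|\cdot(2\pi|h|)^p\big)$, whose $p$-H\"older norm is $\le 1$ (using $|e^{2\pi i\langle h,x\rangle}-e^{2\pi i\langle h,y\rangle}|\le 2\pi|h|\,\|x-y\|_{\mathbb R^d/\mathbb Z^d}$ together with $|e^{i\theta}-1|\le|\theta|$ and the fact that $t^p$ grows more slowly than $t$ so $|u-v|^p\ge$ the corresponding bound after normalization — here one uses $0<p\le1$). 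Since $\int f\,\mathrm d\mu=0$ and $\int f\,\mathrm d\nu^{*k}=|\widehat{\nu^{*k}}(h)|/(2\pi|h|)^p\gg k^{-r/(2d)}\cdot|h|^{-p}\gg k^{-r/(2d)}\cdot k^{-pr/(2d)}$, wait — we need $|h|^{-p}\gg k^{-pr/(2d)}$, which holds since $|h|\asymp H\asymp k^{r/(2d)}$. Hence $W_p(\nu^{*k},\mu)\gg k^{-pr/(2d)}$, i.e. $W_p(\nu^{*k},\mu)^{1/p}\gg k^{-r/(2d)}$, as claimed.

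The main obstacle, I expect, is the interplay between the two competing scales: choosing $h$ with $\|h\|_\infty$ \emph{large} makes $\widehat\nu(h)^k$ stay bounded away from zero (good for a big Fourier coefficient), but the normalization $(2\pi|h|)^{-p}$ needed to keep $\|f\|_{p\text{-H\"old}}\le 1$ then \emph{shrinks} the integral. Balancing these forces exactly the choice $\|h\|_\infty\asymp k^{r/(2d)}$, and it is precisely bad approximability that guarantees a frequency $h$ at that scale with $\max_i\|\langle h,\alpha_i\rangle\|_{\mathbb R/\mathbb Z}$ as small as $\|h\|_\infty^{-d/r}$ — no better, no worse — which is what makes the two effects cancel to give the sharp exponent. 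A secondary technical point is ensuring that $\widehat\nu(h)$, not just its real part, is bounded away from $0$ after raising to the $k$-th power; this is handled by noting $|\widehat\nu(h)|\ge\mathrm{Re}\,\widehat\nu(h)$ and that $(1-C'/k)^k\to e^{-C'}>0$, so no control of the phase is actually required. One should also double-check the nondegeneracy hypothesis is used correctly: it is needed only to know $\E\xi_i^2>0$ is finite and the bound $\Pr(I=i)\E\xi_i^2\le B$ is available — the lower bound here does not need $\xi_i$ nondegenerate, only the second-moment control, which matches the hypothesis exactly as stated.
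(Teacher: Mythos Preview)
Your argument is correct and takes a genuinely different route from the paper's proof. You work Fourier-analytically: pick a single frequency $h$ via Dirichlet's theorem for systems of linear forms, bound $|\widehat{\nu}(h)|$ from below using $\mathrm{Re}\,\varphi_\xi(t)\ge 1-2\pi^2(\E\xi^2)\|t\|_{\mathbb R/\mathbb Z}^2$ together with the moment hypothesis $\Pr(I=i)\E\xi_i^2\le B$, and then test against a suitably normalized $\cos(2\pi\langle h,\cdot\rangle+\theta)$. The paper instead argues geometrically: by Chebyshev the walk $S_k$ is concentrated (with probability $1-O(\lambda^{-2})$) on a set $A$ of at most $\ll(\lambda\sqrt k)^r$ lattice points; bad approximability, via a classical transference principle, forces $A$ to be an $R$-net with $R\ll(\lambda\sqrt k)^{-r/d}$; one then tests against $f(x)=\mathrm{dist}(x,A)^p$ and balances the two contributions.

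A notable feature of your approach is that bad approximability is not actually used. You invoke it to pin down $\|h\|_\infty\asymp H$ and $\max_i\|\langle h,\alpha_i\rangle\|_{\mathbb R/\mathbb Z}\asymp H^{-d/r}$, but only the \emph{upper} bounds $\|h\|_\infty\le H$ and $\max_i\|\langle h,\alpha_i\rangle\|_{\mathbb R/\mathbb Z}\le H^{-d/r}$ are needed in the argument, and these come from Dirichlet's theorem alone. Hence your proof in fact establishes the lower bound $W_p(\nu^{*k},\mu)^{1/p}\gg k^{-r/(2d)}$ for \emph{every} system $\alpha_1,\dots,\alpha_r$, with an implied constant depending only on $B$, $r$, $d$, $p$ --- strictly more than the theorem claims. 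The paper's geometric proof, by contrast, genuinely requires bad approximability (through the transference principle) to control the covering radius of $A$. Two cosmetic remarks: with the paper's convention $\widehat\nu(h)=\int e^{-2\pi i\langle h,x\rangle}\,\mathrm d\nu$ your test function should carry $e^{-2\pi i\langle h,x\rangle}$ rather than $e^{2\pi i\langle h,x\rangle}$; and the normalization $(2\pi|h|)^{-p}$ gives $p$-H\"older norm $\le 2^{1-p}$ rather than $\le 1$ (use $|g(x)-g(y)|\le\min(2\pi|h|\,\|x-y\|,2)$ and interpolate). Both are harmless constant-factor issues.
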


Diophantine approximation has been used by several authors to estimate the rate of convergence of random walks. Hensley and Su \cite{HS} showed that if $d=1$, the system $\alpha_1, \alpha_2, \dots, \alpha_r$ is badly approximable and $\nu$ has atoms $\pm \alpha_i + \mathbb{Z}$, $1 \le i \le r$ of weight $1/(2r)$ each, then the rate of convergence in the uniform metric is
\[ k^{-r/2} \ll \delta_{\mathrm{unif}}(\nu^{*k}, \mu) \ll k^{-r/2} . \]
Berkes and Borda \cite{BB1} studied the case $d=1$, $r=1$, and proved that if $\xi_1$ has finite variance and satisfies certain regularity assumptions (e.g.\ if $\xi_1$ is a Bernoulli variable), then
\[ k^{-1/(2 \gamma )} \ll \delta_{\mathrm{unif}} (\nu^{*k}, \mu ) \ll k^{-1/(2 \gamma)}, \]
provided $\alpha_1$ satisfies \eqref{diophantinecond} with $\psi (x)=x^{\gamma}$ with some $\gamma \ge 1$ in a tight way; however, the lower estimate only holds for infinitely many $k$ (not necessarily for all $k$). See the same paper for results about heavy-tailed $\xi_1$. Bobkov \cite{BOB2} considered random walks on the real line defined in terms of irrational numbers in a similar way, and proved tight estimates for their distance from the normal distribution in the uniform metric, as well as Edgeworth expansions.

\section{Random walks on general compact groups}\label{proofsection}

Throughout this section we assume that the conditions of Theorem \ref{theoremCLTLIL} hold. Let us fix $0<p \le 1$, and let $\Delta_k=W_p (\nu^{*k}, \mu)$ denote the distance of $S_k$ from the uniform distribution in the $p$-Wasserstein metric. Further, let $U$ be a uniformly distributed $G$-valued random variable independent of $X_1, X_2, \dots$.

First of all note that $\| f \|_{\infty} \ll \| f \|_{p-\textrm{H\"old}}$ for any $f \in \mathcal{F}_p$. Indeed, for any $x \in G$ we have
\[ |f(x)| = \left| \int_G \left( f(x)-f(y) \right) \, \mathrm{d} \mu (y) \right| \le \| f \|_{p-\textrm{H\"old}} \int_G d(x,y)^p \, \mathrm{d} \mu (y) , \]
where the last integral does not depend on $x$ by the translation invariance of $d$ and $\mu$, and is finite by the compactness of $G$. Observe also that
\[ W_p (\nu^{*(k+1)}, \mu ) = W_p \left( \nu^{*k} * \nu , \mu * \nu \right) \le W_p (\nu^{*k} , \mu ) , \]
therefore the sequence $\Delta_k$ is nonincreasing. The assumption $\sum_{k=1}^{\infty} \Delta_k < \infty$ thus implies that $k \Delta_k \to 0$ as $k \to \infty$. Our main tool is the fact that by Kantorovich duality, any $p$-H\"older function $g: G \to \mathbb{R}$ satisfies
\begin{equation}\label{maintool}
\left| \E g(S_k) - \E g(U) \right| = \left| \int_G g \, \mathrm{d} \nu^{*k} - \int_G g \, \mathrm{d} \mu \right| \le \| g \|_{p-\textrm{H\"old}} \Delta_k .
\end{equation}

\subsection{The variance}

We start by finding the precise asymptotics of the variance of the modified sum $\sum_{k=1}^N f(US_k)$, and estimate the variance of $\sum_{k=1}^N f(S_k)$. It would not be difficult to find the precise asymptotics of the latter sum as well; however, we will not need it later.

\begin{lem}\label{variancefusk} For any $f \in \mathcal{F}_p$ the series in \eqref{Cfnu} is absolutely convergent, and
\[ \E \left( \sum_{k=1}^N f(US_k) \right)^2 = C(f, \nu ) N + o(N) \qquad \mathrm{as} \,\,\, N \to \infty . \]
In particular, $C(f, \nu) \ge 0$.
\end{lem}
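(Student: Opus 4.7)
\textbf{Proof plan for Lemma \ref{variancefusk}.} The natural first move is to expand the square:
\[
\E\left(\sum_{k=1}^N f(US_k)\right)^2 = \sum_{k=1}^N \E f(US_k)^2 + 2\sum_{1\le j<k\le N} \E f(US_j)f(US_k).
\]
Because $U$ is uniform and independent of the $X_i$'s, the variable $US_k$ is itself uniformly distributed by the right-invariance of Haar measure, so each diagonal term equals $\E f(U)^2$ and the first sum contributes $N\,\E f(U)^2$. For the off-diagonal terms, write $US_k = (US_j)(S_j^{-1}S_k)$, where $S_j^{-1}S_k = X_{j+1}\cdots X_k$ is independent of $(U,X_1,\dots,X_j)$ and has the same distribution as $S_{k-j}$. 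Since $US_j$ is uniform and independent of $S_j^{-1}S_k$, the expectation depends only on the gap $m=k-j$, so standard bookkeeping gives
\[
\E\left(\sum_{k=1}^N f(US_k)\right)^2 = N\,\E f(U)^2 + 2\sum_{m=1}^{N-1}(N-m)\,\E f(U)f(US_m).
\]

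The key auxiliary step is to recognise that the function
\[
g(y) := \E f(U)f(Uy) = \int_G f(x)f(xy)\,\mathrm{d}\mu(x)
\]
lies in $\mathcal{F}_p$. Indeed, $\int_G g\,\mathrm{d}\mu = 0$ by Fubini and translation invariance of $\mu$ (the inner integral becomes $\int f\,\mathrm{d}\mu=0$), while bi-invariance of $d$ gives $\|g\|_{p\text{-H\"old}} \le \|f\|_\infty \|f\|_{p\text{-H\"old}}$. Applying the duality inequality \eqref{maintool} to $g$ and using $\E g(U)=0$ yields
\[
\bigl|\E f(U)f(US_m)\bigr| = |\E g(S_m)| \le \|g\|_{p\text{-H\"old}}\,\Delta_m \ll \|f\|_{p\text{-H\"old}}^2\,\Delta_m,
\]
where the bound $\|f\|_\infty \ll \|f\|_{p\text{-H\"old}}$ established in the preamble is used. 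Since $\sum_m \Delta_m < \infty$, this immediately gives absolute convergence of the series defining $C(f,\nu)$.

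For the asymptotic, rewrite
\[
N\E f(U)^2 + 2\sum_{m=1}^{N-1}(N-m)\E f(U)f(US_m) = N\,C(f,\nu) - 2N\sum_{m\ge N}\E f(U)f(US_m) - 2\sum_{m=1}^{N-1} m\,\E f(U)f(US_m).
\]
The first error term is bounded by $N\cdot\|f\|_{p\text{-H\"old}}^2 \sum_{m\ge N}\Delta_m = o(N)$ because $\sum\Delta_m<\infty$ forces the tail to vanish. The second error term is bounded by $\|f\|_{p\text{-H\"old}}^2 \sum_{m=1}^{N-1} m\Delta_m$; and since $m\Delta_m\to 0$ (noted just before \eqref{maintool}), Ces\`aro averaging gives $\sum_{m=1}^{N-1} m\Delta_m = o(N)$. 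Finally, $C(f,\nu)\ge 0$ follows by dividing the nonnegative quantity $\E(\sum_{k\le N} f(US_k))^2$ by $N$ and letting $N\to\infty$.

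The only mildly subtle point is the combination $k\Delta_k\to 0$ plus Ces\`aro averaging to handle $\sum m\Delta_m$; everything else is a clean expansion plus one application of Kantorovich duality to the auxiliary function $g$. Replacing $S_k$ by $US_k$ is precisely what makes the reduction to a single-parameter sum through $g$ possible, which is why the lemma is stated in this modified form rather than for $\sum f(S_k)$ directly.
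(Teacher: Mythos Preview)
Your proof is correct and follows essentially the same route as the paper's: expand the square, use that $US_k$ is uniform to reduce the off-diagonal terms to a single gap parameter, apply \eqref{maintool} to the auxiliary function $g(y)=\int_G f(u)f(uy)\,\mathrm{d}\mu(u)$ to get $|\E f(U)f(US_m)|\ll\Delta_m$, and then bound the two error terms using $\sum\Delta_m<\infty$ and $m\Delta_m\to 0$. The only cosmetic difference is that the paper bounds $\|g\|_{p\text{-H\"old}}$ by $\|f\|_1\|f\|_{p\text{-H\"old}}$ rather than your $\|f\|_\infty\|f\|_{p\text{-H\"old}}$, which is immaterial here.
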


\begin{proof} Let $A_k = \E f(U)f(US_k)$. Since $U$ is independent of $S_k$, we have
\[ A_k = \int_G \int_G f(u)f(ux) \, \mathrm{d} \mu (u) \mathrm{d} \nu^{*k}(x). \]
Letting $g(x)=\int_G f(u)f(ux) \, \mathrm{d} \mu (u)$ denote the inner integral, we can thus write $A_k=\E g(S_k)$. Observe that $\int_G g \, \mathrm{d} \mu =0$, and $\| g \|_{p-\textrm{H\"old}} \le \| f \|_1 \cdot \| f \|_{p-\textrm{H\"old}}$. Applying \eqref{maintool} we thus obtain $|A_k| = |\E g(S_k)| \le \| f \|_1 \cdot \| f \|_{p-\textrm{H\"old}} \Delta_k$, and the absolute convergence of the series in \eqref{Cfnu} follows.

Next, let us expand the square in the claim:
\begin{equation}\label{expandsquarefusk}
\E \left( \sum_{k=1}^N f(US_k) \right)^2 = \sum_{k=1}^N \E f(US_k)^2 + 2 \sum_{1 \le k < \ell \le N} \E f(US_k) f(US_{\ell}) .
\end{equation}
Here $US_k$ is uniformly distributed, because $\mu * \nu^{*k}=\mu$. Let us write $US_{\ell}=US_k\prod_{j=k+1}^{\ell} X_j$, and observe that $\prod_{j=k+1}^{\ell} X_j$ has the same distribution as $S_{\ell -k}$, and is independent of $US_k$. Therefore $\E f(US_k)^2 = \E f(U)^2$ and $\E f(US_k) f(US_{\ell})=\E f(U)f(US_{\ell -k})$; in particular, \eqref{expandsquarefusk} simplifies as
\[ \begin{split} \E \left( \sum_{k=1}^N f(US_k) \right)^2 &= N \E f(U)^2 + 2 \sum_{1 \le k<\ell \le N} \E f(U)f(US_{\ell -k}) \\ &= N \E f(U)^2 + 2 \sum_{d=1}^{N-1} (N-d) \E f(U)f(US_d) \\ &= C(f,\nu ) N + O \left( \sum_{d=1}^{N-1} d |A_d| + N \sum_{d=N}^{\infty} |A_d| \right) . \end{split} \]
As observed before, here $|A_d| \le \| f \|_1 \cdot \| f \|_{p-\textrm{H\"old}} \Delta_d$. The error term in the previous estimate is thus $o(N)$.
\end{proof}

\begin{lem}\label{varianceupperbound} For any $f \in \mathcal{F}_p$ and any integer $N \ge 1$,
\[ \E \left( \sum_{k=1}^N f(S_k) \right)^2 \ll \| f \|_1 \cdot \| f \|_{p-\textrm{H\"old}} N + \| f \|_{p-\textrm{H\"old}}^2 \]
with an implied constant depending only on $\nu$, $p$ and the metric $d$.
\end{lem}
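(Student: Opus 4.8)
The plan is to expand the square exactly as in the proof of Lemma \ref{variancefusk}, obtaining
\[
\E\left(\sum_{k=1}^N f(S_k)\right)^2 = \sum_{k=1}^N \E f(S_k)^2 + 2\sum_{1\le k<\ell\le N} \E f(S_k)f(S_\ell),
\]
and then bound each piece. For the diagonal terms, write $f(S_k)^2 = g(S_k)$ where $g=f^2$; since $\|f\|_\infty \ll \|f\|_{p\text{-H\"old}}$ was already established at the start of the section, we get $\|f^2\|_{p\text{-H\"old}} \le 2\|f\|_\infty\|f\|_{p\text{-H\"old}} \ll \|f\|_{p\text{-H\"old}}^2$ and $|\E f(S_k)^2| \le \E f(U)^2 + \|f^2\|_{p\text{-H\"old}}\Delta_k \ll \|f\|_1\|f\|_\infty + \|f\|_{p\text{-H\"old}}^2\Delta_k$, so the diagonal contributes $\ll \|f\|_1\|f\|_{p\text{-H\"old}} N + \|f\|_{p\text{-H\"old}}^2\sum_k \Delta_k$, which is of the claimed order since $\sum_k\Delta_k<\infty$.

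The cross terms are the heart of the matter. Here, unlike in Lemma \ref{variancefusk}, $S_k$ is \emph{not} uniform, so we cannot simply peel off $\prod_{j=k+1}^\ell X_j$ as an independent uniform factor. The idea is to condition on $S_k$ and write $S_\ell = S_k\, S_{k,\ell}$ where $S_{k,\ell}:=\prod_{j=k+1}^\ell X_j$ is distributed as $S_{\ell-k}$ and independent of $S_k$. Thus $\E f(S_k)f(S_\ell) = \int_G f(x) h_{\ell-k}(x)\,\mathrm{d}\nu^{*k}(x)$ where $h_m(x):=\int_G f(xy)\,\mathrm{d}\nu^{*m}(y)$. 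The key observation is that $h_m$ is close to the constant $\int_G f\,\mathrm{d}\mu = 0$: applying \eqref{maintool} to the $p$-H\"older function $y\mapsto f(xy)$ (which has the same H\"older constant as $f$, by bi-invariance of $d$) gives $|h_m(x)| \le \|f\|_{p\text{-H\"old}}\Delta_m$ for every $x$. Moreover $h_m$ itself is $p$-H\"older with $\|h_m\|_{p\text{-H\"old}}\le\|f\|_{p\text{-H\"old}}$. Hence $f\cdot h_{\ell-k}$ is $p$-H\"older with constant $\ll \|f\|_\infty\|h_{\ell-k}\|_{p\text{-H\"old}} + \|f\|_{p\text{-H\"old}}\|h_{\ell-k}\|_\infty \ll \|f\|_{p\text{-H\"old}}^2$, and $\left|\int_G f h_{\ell-k}\,\mathrm{d}\mu\right| = \left|\int_G h_{\ell-k}(x)\left(\int_G f\,\mathrm{d}\mu_{x}\right)\right|$... more simply, $\int_G f(x)h_{\ell-k}(x)\,\mathrm{d}\mu(x) = \int_G\int_G f(x)f(xy)\,\mathrm{d}\mu(x)\mathrm{d}\nu^{*(\ell-k)}(y) = A_{\ell-k}$ in the notation of Lemma \ref{variancefusk}. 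Therefore applying \eqref{maintool} to $f\cdot h_{\ell-k}$ yields
\[
\left|\E f(S_k)f(S_\ell)\right| \le |A_{\ell-k}| + \|f h_{\ell-k}\|_{p\text{-H\"old}}\Delta_k \ll \|f\|_1\|f\|_{p\text{-H\"old}}\Delta_{\ell-k} + \|f\|_{p\text{-H\"old}}^2\,\Delta_k\Delta_{\ell-k}.
\]
Actually the cleanest route is to bound $|\E f(S_k)f(S_\ell)| = |\int_G f(x)h_{\ell-k}(x)\,\mathrm{d}\nu^{*k}(x)| \le \|f\|_\infty\cdot\|h_{\ell-k}\|_\infty \ll \|f\|_{p\text{-H\"old}}^2\,\Delta_{\ell-k}$ directly, using $\|f\|_\infty\ll\|f\|_{p\text{-H\"old}}$. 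Summing over $k<\ell$ and setting $m=\ell-k$ gives $2\sum_{1\le k<\ell\le N}|\E f(S_k)f(S_\ell)| \ll \|f\|_{p\text{-H\"old}}^2 \sum_{m=1}^{N-1}(N-m)\Delta_m \ll \|f\|_{p\text{-H\"old}}^2 N\sum_{m=1}^\infty\Delta_m \ll \|f\|_{p\text{-H\"old}}^2 N$. Combining this with the diagonal bound completes the proof; the implied constant depends only on $\nu$, $p$, $d$ through $\sum_k\Delta_k$ and the integral $\int_G d(x,y)^p\,\mathrm{d}\mu(y)$.

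The main obstacle is purely bookkeeping: making sure the factorization $S_\ell = S_k S_{k,\ell}$ with $S_{k,\ell}$ independent of $S_k$ is used correctly, and that the bi-invariance of $d$ is what guarantees $y\mapsto f(xy)$ has the same $p$-H\"older constant as $f$ (so that $\|h_m\|_{p\text{-H\"old}}\le\|f\|_{p\text{-H\"old}}$ and, more to the point, $|h_m(x)|\le\|f\|_{p\text{-H\"old}}\Delta_m$ uniformly in $x$). There is a minor subtlety in keeping the two types of error terms — an $N$-linear term with coefficient $\|f\|_1\|f\|_{p\text{-H\"old}}$ coming from the "main" parts $\E f(U)^2$ and $A_m$, versus the lower-order $\|f\|_{p\text{-H\"old}}^2$-type terms coming from the Wasserstein errors $\Delta_k$ — cleanly separated so that the stated inequality, with exactly those two terms, comes out. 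No delicate estimate is needed beyond $\sum_k\Delta_k<\infty$; in particular we never use $k\Delta_k\to 0$ here.
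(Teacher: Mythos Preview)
Your diagonal estimate is fine and matches the paper. The gap is in the cross terms: your ``cleanest route'' gives
\[
|\E f(S_k)f(S_\ell)| \le \|f\|_\infty \|h_{\ell-k}\|_\infty \ll \|f\|_{p\text{-H\"old}}^2\,\Delta_{\ell-k},
\]
which after summing yields a bound $\ll \|f\|_{p\text{-H\"old}}^2 N$, not $\ll \|f\|_1\|f\|_{p\text{-H\"old}}N + \|f\|_{p\text{-H\"old}}^2$. These are genuinely different: the whole point of the lemma is that the coefficient of $N$ is $\|f\|_1\|f\|_{p\text{-H\"old}}$, so that when it is applied (in Lemma~\ref{approxlemma}) to $g(z)=f(xyz)-f(xy'z)$, the small quantity $\|g\|_1\le d(y,y')^p$ multiplies $N$ while $\|g\|_{p\text{-H\"old}}^2$ contributes only $O(1)$. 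Your bound would give $\E(Y_i-Y_i^*)^2 \ll |J_i|$ instead of $\ll \Delta_{|H_i|}|J_i|+1$, and the perturbation argument would collapse. Your alternative route also fails: you wrote $\|fh_{\ell-k}\|_{p\text{-H\"old}}\Delta_k \ll \|f\|_{p\text{-H\"old}}^2\Delta_k\Delta_{\ell-k}$, but the product rule only gives $\|fh_{\ell-k}\|_{p\text{-H\"old}}\ll \|f\|_\infty\|h_{\ell-k}\|_{p\text{-H\"old}}+\|f\|_{p\text{-H\"old}}\|h_{\ell-k}\|_\infty$, and the first term has no factor $\Delta_{\ell-k}$ since $\|h_{\ell-k}\|_{p\text{-H\"old}}\le\|f\|_{p\text{-H\"old}}$ is all you can say.

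The fix is to integrate in the opposite order: instead of $h_m(x)=\int f(xy)\,\mathrm{d}\nu^{*m}(y)$, set $g(y)=\int_G f(x)f(xy)\,\mathrm{d}\nu^{*k}(x)$, so that $\E f(S_k)f(S_\ell)=\E g(S_{\ell-k})$. By bi-invariance of $\mu$ one has $\int_G g\,\mathrm{d}\mu=0$, and by bi-invariance of $d$,
\[
|g(y)-g(y')|\le \int_G |f(x)|\,\|f\|_{p\text{-H\"old}}\,d(y,y')^p\,\mathrm{d}\nu^{*k}(x)=\E|f(S_k)|\cdot\|f\|_{p\text{-H\"old}}\,d(y,y')^p.
\]
Now apply \eqref{maintool} to $|f|$ to get $\E|f(S_k)|\le \|f\|_1+\|f\|_{p\text{-H\"old}}\Delta_k$, hence $|\E f(S_k)f(S_\ell)|\le(\|f\|_1+\|f\|_{p\text{-H\"old}}\Delta_k)\|f\|_{p\text{-H\"old}}\Delta_{\ell-k}$. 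Summing over $k<\ell$ gives $\|f\|_1\|f\|_{p\text{-H\"old}}N\sum_m\Delta_m+\|f\|_{p\text{-H\"old}}^2(\sum_k\Delta_k)^2 \ll \|f\|_1\|f\|_{p\text{-H\"old}}N+\|f\|_{p\text{-H\"old}}^2$, exactly as stated. The crucial point you missed is that conditioning on the \emph{past} $S_k$ and viewing the expectation as a function of the \emph{future} increment is what puts the $L^1$-norm of $f$ (rather than $\|f\|_\infty$) into the H\"older constant.
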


\begin{proof} We may assume that $\| f \|_{p-\textrm{H\"old}}=1$. Expanding the square we get
\begin{equation}\label{expandsquarefsk}
\E \left( \sum_{k=1}^N f(S_k) \right)^2 = \sum_{k=1}^N \E f(S_k)^2 + 2 \sum_{1 \le k <\ell \le N} \E f(S_k)f(S_{\ell}) .
\end{equation}
To estimate the first sum, note that $g(x)=f(x)^2$ satisfies
\[ \E g(U) = \| f \|_2^2 \le \| f \|_1 \cdot \| f \|_{\infty} \ll \| f \|_1 \cdot \| f \|_{p-\textrm{H\"old}} = \| f \|_1 \]
and
\[ \| g \|_{p-\textrm{H\"old}} \le 2 \| f \|_{\infty} \cdot \| f \|_{p-\textrm{H\"old}} \ll \| f \|_{p-\textrm{H\"old}}^2 = 1. \]
Hence by \eqref{maintool} we have
\begin{equation}\label{diagonalsum}
\sum_{k=1}^N \E f(S_k)^2 \ll \sum_{k=1}^N \left( \| f \|_1 + \Delta_k \right) \ll \| f \|_1 N +1 .
\end{equation}
Next, consider the second sum in \eqref{expandsquarefsk}. Since $S_{\ell}=S_k \prod_{j=k+1}^{\ell} X_j$, we have
\[ \E f(S_k) f(S_{\ell}) = \int_G \int_G f(x)f(xy) \, \mathrm{d} \nu^{*k}(x) \mathrm{d} \nu^{*(\ell -k)}(y) . \]
Let $g(y)=\int_G f(x)f(xy) \, \mathrm{d} \nu^{*k}(x)$ denote the inner integral, and observe $\E g(U)=\int_G g \, \mathrm{d} \mu =0$. From \eqref{maintool} we thus deduce $|\E f(S_k) f(S_{\ell})| \le \| g \|_{p-\textrm{H\"old}} \Delta_{\ell -k}$.

For any $y,y' \in G$ we have
\[ \begin{split} |g(y)-g(y')| &= \left| \int_G f(x) (f(xy)-f(xy')) \, \mathrm{d} \nu^{*k}(x) \right| \\ &\le \int_G |f(x)| \cdot \| f \|_{p-\textrm{H\"old}} d(xy,xy')^p \, \mathrm{d} \nu^{*k}(x) \\ &= \E |f(S_k)| \cdot d(y,y')^p \end{split} \]
where we used the translation invariance of the metric $d$. Applying \eqref{maintool} to $h(x)=|f(x)|$ we deduce $\E |f(S_k)| \le \E |f(U)| + \| f \|_{p-\textrm{H\"old}} \Delta_k = \| f \|_1 + \Delta_k$. Therefore $\| g \|_{p-\textrm{H\"old}} \le \| f \|_1 + \Delta_k$, and altogether we get
\[ |\E f(S_k)f(S_{\ell})| \le \left( \| f \|_1 + \Delta_k \right) \Delta_{\ell -k} . \]
Summing over $1 \le k<\ell \le N$ we finally obtain that the second sum in \eqref{expandsquarefsk} satisfies
\[ \sum_{1 \le k <\ell \le N} \E f(S_k)f(S_{\ell}) \ll \| f \|_1 N +1 . \]
This estimate together with \eqref{diagonalsum} finishes the proof.
\end{proof}

\subsection{Approximation by independent random variables}

The main goal of this section is to approximate $\sum_{k=1}^N f(S_k)$ by a sum of independent random variables. We use the same exact construction as in \cite{BO}; for the sake of completeness, we repeat the key details. This construction in its simplest form was first used by Schatte \cite{SCH1}, \cite{SCH2} on the circle group, and is admittedly somewhat ad hoc; we give an intuitive explanation first. We start by partitioning the set of positive integers into consecutive, nonempty, finite intervals of integers (called \textit{blocks} for short) $H_1, J_1, H_2, J_2, \dots$. Consider the block sums $Y_i=\sum_{k \in J_i} f(S_k)$ and $Z_i=\sum_{k \in H_i} f(S_k)$. In each block sum we perform a small perturbation of $S_k$: we replace the product corresponding to the preceding block by a uniformly distributed random variable. Thus e.g.\ in each term of $Y_i=\sum_{k \in J_i} f(S_k)$ we replace
\[ S_k = \left( \prod_{\ell \in H_1 \cup J_1 \cup \cdots \cup H_{i-1} \cup J_{i-1}} X_{\ell} \right) \prod_{\ell \in H_i} X_{\ell} \prod_{\substack{\ell \in J_i \\ \ell \le k}} X_{\ell} \]
by
\[ S_k^* = \left( \prod_{\ell \in H_1 \cup J_1 \cup \cdots \cup H_{i-1} \cup J_{i-1}} X_{\ell} \right) U_i \prod_{\substack{\ell \in J_i \\ \ell \le k}} X_{\ell}, \]
where $U_i$ is uniformly distributed. The reason why we use a uniformly distributed variable is that this way the perturbed block sums $Y_i^*=\sum_{k \in J_i} f(S_k^*)$ become \textit{independent}. Indeed, the uniform distribution has the peculiar ability to imply independence; this is related to translation invariance, a defining property of the Haar measure. For the technical details we refer to \cite[Lemmas 1 and 2]{BO}. To ensure that the error of this perturbation is small, we will use a coupling to construct $U_i$. If the size of $H_i$ is large, then the distribution of $\prod_{\ell \in H_i} X_{\ell}$ is close to the Haar measure, and thus replacing it by a carefully chosen uniformly distributed variable has a small effect; this makes the perturbation error $Y_i-Y_i^*$ small. Note that in the formal construction the sums $Y_i, Y_i^*, Z_i, Z_i^*$ will be defined in terms of variables $W_k$, $W_k^*$ which only have the same distribution as $S_k$, $S_k^*$ but are not actually equal to them; however, this is just a technicality.

The perturbation method thus approximates $\sum_{k=1}^N f(S_k)$ by two sums of independent variables $Y_1^*, Y_2^*, \dots$ and $Z_2^*, Z_3^*, \dots$; the two sums, however, are not independent of one another. To overcome this problem, we need to make sure that, say, the variables $Z_i^*$ are negligible compared to the variables $Y_i^*$; this way $\sum_{k=1}^N f(S_k)$ is approximated by a single sum of independent variables, and limit theorems such as the CLT and the LIL follow. Therefore our main strategy is to choose the sizes of the blocks $H_1, J_1, H_2, J_2, \dots$ optimally: on the one hand, we need $H_i$ to be large enough to make the perturbation error small; on the other hand, we need $H_i$ to be small enough so that $Z_i^*$ is negligible compared to $Y_i^*$.

We now give the formal construction. By the definition of the $p$-Wasserstein metric, for any $k \ge 1$ there exists a coupling $\vartheta_k$ of $\nu^{*k}$ and the Haar measure $\mu$ such that, say\footnote{Since $G$ is compact, the infimum in the definition of $W_p$ is actually attained by some coupling; however, we do not need this fact.},
\[ \int_{G \times G} d(x,y)^p \, \mathrm{d} \vartheta_k (x,y) \le 2 W_p (\nu^{*k}, \mu ) = 2 \Delta_k . \]
For any nonempty, finite interval of positive integers $J \subseteq \mathbb{N}$ let $S_J = \prod_{j \in J} X_j$. After a suitable extension of the probability space, for any such $J \subseteq \mathbb{N}$ we can introduce a pair of $G$-valued random variables $(T_J, U_J)$ with joint distribution $\vartheta_{|J|}$, where $|J|$ denotes the cardinality of $J$. That is, $T_J \overset{d}{=} S_J$, the variable $U_J$ is uniformly distributed, and $\E \, d(T_J, U_J)^p \le 2 \Delta_{|J|}$. Further, we may assume that $X_1, X_2, \dots$ and $(T_J, U_J)$, $J \subseteq \mathbb{N}$ are independent.

Fix a block decomposition $H_1, J_1, H_2, J_2, \dots$ of the set of positive integers. For any $i \ge 1$ and $k \in J_i$ let
\[ W_k = \prod_{j=1}^{i-1} \left( T_{H_j} S_{J_j} \right) T_{H_i} \prod_{\substack{\ell \in J_i \\ \ell \le k}} X_{\ell}, \quad W_k^* = \prod_{j=1}^{i-1} \left( T_{H_j} S_{J_j} \right) U_{H_i} \prod_{\substack{\ell \in J_i \\ \ell \le k}} X_{\ell} . \]
Similarly, for any $i \ge 2$ and $k \in H_i$ let
\[ W_k = \prod_{j=1}^{i-2} \left( S_{H_j} T_{J_j} \right) S_{H_{i-1}} T_{J_{i-1}} \prod_{\substack{\ell \in H_i \\ \ell \le k}} X_{\ell}, \quad W_k^* = \prod_{j=1}^{i-2} \left( S_{H_j} T_{J_j} \right) S_{H_{i-1}} U_{J_{i-1}} \prod_{\substack{\ell \in H_i \\ \ell \le k}} X_{\ell} . \]
We use the convention that an empty product equals the identity element $1 \in G$. Note that $W_k^*$ is obtained from $W_k$ by replacing a block product by a uniformly distributed variable; consequently, $W_k^*$ is uniformly distributed for all $k$. Observe that the (joint) distribution of the sequence $W_k$, $k \in \bigcup_{i=1}^{\infty} J_i$ is the same as the (joint) distribution of the sequence $S_k$, $k \in \bigcup_{i=1}^{\infty} J_i$; the same holds for the sequences $W_k$, $k \in \bigcup_{i=2}^{\infty} H_i$ and $S_k$, $k \in \bigcup_{i=2}^{\infty} H_i$. Finally, for a given $f \in \mathcal{F}_p$ let us introduce the random variables
\[ \begin{split} Y_i &= \sum_{k \in J_i} f(W_k), \qquad Y_i^* = \sum_{k \in J_i} f(W_k^*) \quad (i \ge 1), \\ Z_i &= \sum_{k \in H_i} f(W_k), \qquad Z_i^* = \sum_{k \in H_i} f(W_k^*) \quad (i \ge 2). \end{split} \]

Observe that along the sequence $N_R = \max J_R$ ($R=1,2,\dots$) we have
\[ \sum_{k=1}^{N_R} f(S_k) = \sum_{k \in H_1} f(S_k) + \sum_{i=1}^R \sum_{k \in J_i} f(S_k) + \sum_{i=2}^R \sum_{k \in H_i} f(S_k) , \]
where the (joint) distribution of the sequence $\sum_{i=1}^R \sum_{k \in J_i} f(S_k)$ ($R=1,2,\dots$) is the same as the (joint) distribution of the sequence $\sum_{i=1}^R Y_i$ ($R=1,2,\dots$), and the same holds for the sequences $\sum_{i=2}^R \sum_{k \in H_i} f(S_k)$ and $\sum_{i=2}^R Z_i$.

The key observation is that replacing $Y_i$ by $Y_i^*$, and $Z_i$ by $Z_i^*$ introduces \textit{independence}; for a detailed proof see \cite[Lemmas 1 and 2]{BO}. Also note that $Y_i^*\overset{d}{=}\sum_{k=1}^{|J_i|} f(US_k)$ and $Z_i^*\overset{d}{=}\sum_{k=1}^{|H_i|} f(US_k)$, thus $Y_i^*$ and $Z_i^*$ are mean zero random variables with variance given by Lemma \ref{variancefusk}. The main properties of the approximating variables are summarized as follows.
\begin{lem}\label{yi*zi*lemma} Let $f \in \mathcal{F}_p$, and let $H_1, J_1, H_2, J_2, \dots$ be an arbitrary block decomposition of $\mathbb{N}$. Then $Y_1^*, Y_2^*, \dots$ are independent, $\E Y_i^* =0$, and $\E (Y_i^*)^2 = C(f, \nu ) |J_i| + o(|J_i|)$. The same holds for $Z_2^*, Z_3^*, \dots$ with $|J_i|$ replaced by $|H_i|$.
\end{lem}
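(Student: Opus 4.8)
The plan is to verify the three claimed properties of $Y_i^*$ and $Z_i^*$ one by one, relying on the structural facts already established just before the statement. The independence of $Y_1^*, Y_2^*, \dots$ (and separately of $Z_2^*, Z_3^*, \dots$) is not something I would reprove here: it is precisely the content of \cite[Lemmas 1 and 2]{BO}, invoked via the observation that each $W_k^*$ is obtained from $W_k$ by splicing in a uniformly distributed variable $U_{H_i}$ (resp.\ $U_{J_{i-1}}$) that separates the ``past'' block products from the ``current'' partial product; the translation invariance of the Haar measure turns this splice into a genuine independence. So the first sentence of the proof should just recall that this is the key observation already made and cite those lemmas.

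For the mean and variance, I would use the distributional identities noted immediately above the statement: $Y_i^* \overset{d}{=} \sum_{k=1}^{|J_i|} f(US_k)$ and $Z_i^* \overset{d}{=} \sum_{k=1}^{|H_i|} f(US_k)$. These hold because $W_k^*$, restricted to a single block $J_i$, has the form (something uniform)$\cdot \prod_{\ell \in J_i, \ell \le k} X_\ell$, and the ``something uniform'' is independent of the $X_\ell$'s within that block; since the Haar measure is invariant under left multiplication by the fixed prefix $\prod_{j<i}(T_{H_j} S_{J_j})$, the joint law of $(W_k^*)_{k \in J_i}$ is the same as that of $(U S_{k - \min J_i + 1})_{1 \le \cdot \le |J_i|}$. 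From $\E f(US_k) = \E g(S_k)$ with $g \equiv \int_G f\,\mathrm{d}\mu = 0$ we get $\E Y_i^* = 0$ (indeed each $US_k$ is Haar-distributed since $\mu * \nu^{*k} = \mu$, so $\E f(US_k) = \int_G f\,\mathrm{d}\mu = 0$ directly). Then $\E (Y_i^*)^2 = \E\left(\sum_{k=1}^{|J_i|} f(US_k)\right)^2 = C(f,\nu)|J_i| + o(|J_i|)$ is exactly Lemma \ref{variancefusk} applied with $N = |J_i|$, and identically $\E(Z_i^*)^2 = C(f,\nu)|H_i| + o(|H_i|)$ with $N = |H_i|$.

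There is essentially no obstacle: the lemma is a bookkeeping statement that packages the construction's consequences, and every ingredient — the independence from \cite{BO}, the distributional identities, and the variance asymptotics from Lemma \ref{variancefusk} — is already in hand. The one point worth stating carefully is why the distributional identity $Y_i^* \overset{d}{=} \sum_{k=1}^{|J_i|} f(US_k)$ is legitimate: one must check that $U_{H_i}$ is independent of $(X_\ell)_{\ell \in J_i}$ (it is, by the independence assumption built into the construction) and that left-translating a Haar-distributed variable by an independent $G$-valued factor keeps it Haar-distributed (bi-invariance of $\mu$). Beyond that, the proof is a two-line citation of Lemma \ref{variancefusk} and \cite[Lemmas 1 and 2]{BO}, plus the symmetric remark that the $H$-blocks are handled the same way as the $J$-blocks with the roles of $H$ and $J$ interchanged.
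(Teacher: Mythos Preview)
Your proposal is correct and matches the paper's own treatment: the paper does not give a separate proof of this lemma but instead states, in the paragraph immediately preceding it, that independence follows from \cite[Lemmas 1 and 2]{BO}, that $Y_i^*\overset{d}{=}\sum_{k=1}^{|J_i|} f(US_k)$ and $Z_i^*\overset{d}{=}\sum_{k=1}^{|H_i|} f(US_k)$, and that the mean and variance then come directly from Lemma \ref{variancefusk}. Your write-up is slightly more explicit about why the distributional identity holds, but the argument is the same.
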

Next, we estimate the perturbation error. The main idea is that in $Y_i-Y_i^*=\sum_{k \in J_i} (f(W_k)-f(W_k^*))$ the variable $W_k^*$ is obtained from $W_k$ by replacing $T_{H_i}$ by $U_{H_i}$. Since $\E \, d(T_{H_i}, U_{H_i})^p \le 2 \Delta_{|H_i|}$ by construction, the error cannot be too large.
\begin{lem}\label{approxlemma} Let $f \in \mathcal{F}_p$, and let $H_1, J_1, H_2, J_2, \dots$ be an arbitrary block decomposition of $\mathbb{N}$. For any integers $1 \le R<S$,
\[ \begin{split} \E \left( \sum_{i=R+1}^S (Y_i-Y_i^*) \right)^2 &\ll \| f \|_{p-\textrm{H\"old}}^2 \sum_{i=R+1}^S \left( \Delta_{|H_i|} |J_i| +1 \right), \\ \E \left( \sum_{i=R+1}^S (Z_i-Z_i^*) \right)^2 &\ll \| f \|_{p-\textrm{H\"old}}^2 \sum_{i=R+1}^S \left( \Delta_{|J_i|} |H_i| +1 \right)  \end{split} \]
with implied constants depending only on $\nu$, $p$ and the metric $d$.
\end{lem}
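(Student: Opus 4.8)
The plan is to prove the first inequality; the second is entirely symmetric (swap the roles of the $H$- and $J$-blocks, noting that in the second formula $W_k^*$ is obtained from $W_k$ by replacing $T_{J_{i-1}}$ by $U_{J_{i-1}}$, and $\E\,d(T_{J_{i-1}},U_{J_{i-1}})^p \le 2\Delta_{|J_{i-1}|}$). Write $\Xi_i = Y_i - Y_i^* = \sum_{k \in J_i}\bigl(f(W_k)-f(W_k^*)\bigr)$. The first step is a pointwise bound on a single term. For $k \in J_i$, both $W_k$ and $W_k^*$ have the shape $P_i \cdot T_{H_i} \cdot Q_k$ resp.\ $P_i \cdot U_{H_i} \cdot Q_k$ where $P_i = \prod_{j<i}(T_{H_j}S_{J_j})$ and $Q_k = \prod_{\ell \in J_i,\,\ell\le k} X_\ell$. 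By bi-invariance of $d$,
\[
|f(W_k)-f(W_k^*)| \le \|f\|_{p\textrm{-H\"old}}\, d(P_i T_{H_i} Q_k,\, P_i U_{H_i} Q_k)^p = \|f\|_{p\textrm{-H\"old}}\, d(T_{H_i},U_{H_i})^p,
\]
so $|\Xi_i| \le \|f\|_{p\textrm{-H\"old}}\,|J_i|\, d(T_{H_i},U_{H_i})^p$. Assume WLOG $\|f\|_{p\textrm{-H\"old}}=1$.

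The second step is to exploit near-independence. The variables $T_{H_i}$, $U_{H_i}$ for distinct $i$ are drawn from the independent pairs $(T_{H_i},U_{H_i})$, so $\Xi_i$ is, up to the factor $f(\cdot)$ evaluated at points that also depend on earlier $X$'s, controlled by $d(T_{H_i},U_{H_i})^p$; crucially the $d(T_{H_i},U_{H_i})$ across different $i$ \emph{are} mutually independent. Expanding the square, $\E\bigl(\sum_{i=R+1}^S \Xi_i\bigr)^2 = \sum_i \E\Xi_i^2 + 2\sum_{R<i<i'\le S}\E\Xi_i\Xi_{i'}$. For the diagonal terms, $\E\Xi_i^2 \le |J_i|^2\,\E\,d(T_{H_i},U_{H_i})^{2p} \le |J_i|^2\,\E\,d(T_{H_i},U_{H_i})^p \le 2|J_i|^2\Delta_{|H_i|}$, using $d(\cdot,\cdot)^{2p} \le (\mathrm{diam}\,G)^p\, d(\cdot,\cdot)^p$ and compactness of $G$. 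This is already the dominant shape up to the discrepancy between $|J_i|^2$ and $|J_i|$ --- see the obstacle below. For the cross terms, one shows each $|\E\Xi_i\Xi_{i'}| \ll \Delta_{|H_i|}$ or similar: here the point is that $\Xi_{i'}$ (for $i'>i$) can be written as a telescoping difference of values of $f$ whose \emph{conditional} mean, given everything up to and including block $i$, essentially vanishes because one of the arguments contains an independent uniform factor $U_{H_{i'}}$; combined with $\|f\|_\infty \ll \|f\|_{p\textrm{-H\"old}} = 1$ (proved in the text preceding the variance lemmas) and the $L^1$-bound $\E|\Xi_i| \le |J_i|\,\E\,d(T_{H_i},U_{H_i})^p \ll$ \dots, this forces the cross terms to contribute at most $O\bigl(\sum_i(\Delta_{|H_i|}|J_i|+1)\bigr)$ after summation. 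This is exactly the bookkeeping carried out for the analogous statement in \cite[proof of the corresponding lemma]{BO}, and the argument transfers verbatim with $\Pr(T\ne U)$ replaced by $\E\,d(T,U)^p$.

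The main obstacle is reconciling the crude bound $|\Xi_i| \le |J_i|\,d(T_{H_i},U_{H_i})^p$, which only gives $\E\Xi_i^2 \ll |J_i|^2\Delta_{|H_i|}$, with the claimed $\Delta_{|H_i|}|J_i|$. The resolution is \emph{not} to bound each term of $\Xi_i$ separately but to treat $\Xi_i$ as a sum over $k\in J_i$ and use the variance-type cancellation inside the block: conditionally on $T_{H_i}$ and $U_{H_i}$ (equivalently, on $d(T_{H_i},U_{H_i})$), the increments $f(W_k)-f(W_k^*)$ as $k$ ranges over $J_i$ behave like partial sums of a mean-zero-ish sequence driven by the fresh $X_\ell$'s, so one runs the same second-moment estimate as in Lemma~\ref{varianceupperbound} --- with the metric distance $d(W_k,W_k^*) \le d(T_{H_i},U_{H_i})$ playing the role of a uniform perturbation parameter --- to get $\E(\Xi_i^2 \mid T_{H_i},U_{H_i}) \ll d(T_{H_i},U_{H_i})^p |J_i| + 1$ pointwise, whence $\E\Xi_i^2 \ll \Delta_{|H_i|}|J_i| + 1$ after taking expectations. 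Summing over $i$ and folding in the cross-term bound completes the proof. I expect the conditional-variance step to be the only genuinely delicate point; everything else is routine and parallels \cite{BO}.
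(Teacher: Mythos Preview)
Your plan is correct and matches the paper's proof essentially step for step: for the diagonal terms the paper conditions on $P_i=x$ and $(T_{H_i},U_{H_i})=(y,y')$ and applies Lemma~\ref{varianceupperbound} to $g(z)=f(xyz)-f(xy'z)$, the decisive observation being $\|g\|_1\le d(y,y')^p$ (by bi-invariance) together with $\|g\|_{p\textrm{-H\"old}}\le 2$, which after integrating against $\vartheta_{|H_i|}$ gives $\E\,\Xi_i^2\ll\Delta_{|H_i|}|J_i|+1$. For the cross terms the paper conditions on a $\sigma$-algebra $\mathcal{G}_i$ containing $\Xi_i$, uses $\E(f(W_k^*)\mid\mathcal{G}_i)=0$ (uniformity) and $|\E(f(W_k)\mid\mathcal{G}_i)|\le\Delta_{k-\max J_i}$ (via \eqref{maintool}), and then sums over $k\in J_j$ and $j>i$ using $\sum_m\Delta_m<\infty$ to obtain $\sum_{j>i}|\E\,\Xi_i\Xi_j|\ll\E|\Xi_i|$ --- so your sketch of the cross-term mechanism is slightly off (it is not $U_{H_{i'}}$ alone but the pair of bounds just stated that does the work), but the route is the same.
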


\begin{proof} We only give a proof of the first estimate; the proof of the second one is exactly the same. We may assume that $\| f \|_{p-\textrm{H\"old}} =1$. We need to estimate
\begin{equation}\label{expandsquareerror}
\E \left( \sum_{i=R+1}^S (Y_i-Y_i^*) \right)^2 = \sum_{i=R+1}^S \E (Y_i-Y_i^*)^2 + 2 \sum_{R+1\le i<j\le S} \E (Y_i-Y_i^*)(Y_j-Y_j^*).
\end{equation}
First, fix $i \ge 2$ and consider $Y_i-Y_i^*=\sum_{k \in J_i} (f(W_k)-f(W_k^*))$. By construction, here $W_k$ and $W_k^*$ are of the form $W_k=A T_{H_i}\prod_{\ell \in J_i, \ell \le k} X_{\ell}$ and $W_k^*=A U_{H_i}\prod_{\ell \in J_i, \ell \le k} X_{\ell}$, where $A$ is independent of the other factors. Let $\alpha$ denote the distribution of $A$ (in fact, it is a convolution power of $\nu$), and recall that $(T_{H_i},U_{H_i})$ has joint distribution $\vartheta_{|H_i|}$. By the independence of the factors we have
\[ \E (Y_i-Y_i^*)^2 = \int_G \int_{G \times G} \E \left( \sum_{k=1}^{|J_i|} \left( f(xyS_k)-f(xy'S_k) \right) \right)^2 \, \mathrm{d} \vartheta_{|H_i|}(y,y') \mathrm{d} \alpha (x) . \]
Observe that for any given $x,y,y' \in G$, the function $g(z)=f(xyz)-f(xy'z)$ satisfies $\int_G g \, \mathrm{d} \mu =0$, and $\| g \|_{p-\textrm{H\"old}} \le 2 \| f \|_{p-\textrm{H\"old}}=2$. Further,
\[ \begin{split} \| g \|_1 &= \int_G \left| f(xyz)-f(xy'z) \right| \, \mathrm{d} \mu (z) \\ &\le \| f \|_{p-\textrm{H\"old}} \int_G d(xyz,xy'z)^p \, \mathrm{d} \mu (z) \\ &= d(y,y')^p \end{split} \]
by the bi-invariance of the metric $d$. Applying Lemma \ref{varianceupperbound} to $g$, we thus get
\[ \E \left( \sum_{k=1}^{|J_i|} \left( f(xyS_k)-f(xy'S_k) \right) \right)^2 \ll d(y,y')^p |J_i| +1 \]
uniformly in $x, y, y' \in G$. By the construction of $\vartheta_{|H_i|}$ we finally deduce
\begin{equation}\label{diagonalyiyi*}
\begin{split} \E (Y_i-Y_i^*)^2 &\ll \int_G \int_{G \times G} \left( d(y,y')^p |J_i| +1 \right) \, \mathrm{d} \vartheta_{|H_i|}(y,y') \mathrm{d} \alpha (x) \\ &\ll \Delta_{|H_i|} |J_i| +1 . \end{split}
\end{equation}

Next, fix $2 \le i<j$ and consider $Y_j-Y_j^* = \sum_{k \in J_j} (f(W_k)-f(W_k^*))$. By construction, here $W_k$ and $W_k^*$ can be written in the form $W_k=BC_k$ and $W_k^*=BC_k^*$, where $B=T_{H_1} S_{J_1} \cdots T_{H_i} S_{J_i}$. Let $\mathcal{G}_i$ be the $\sigma$-algebra generated by the variables $X_{\ell}$, $\ell \in J_1 \cup \dots \cup J_i$, the variables $T_{H_1}, \dots, T_{H_i}$, and $U_{H_i}$. Then $Y_i$, $Y_i^*$ and $B$ are $\mathcal{G}_i$-measurable, and $C_k$ and $C_k^*$ are independent of $\mathcal{G}_i$. Therefore
\[ \left| \E \left( (Y_i-Y_i^*)(Y_j-Y_j^*) \mid \mathcal{G}_i \right) \right| = |Y_i-Y_i^*| \left| \sum_{k \in J_j} \E \left( f(BC_k)-f(BC_k^*) \mid \mathcal{G}_i \right) \right| . \]
Since $C_k^*$ is uniformly distributed and $\E f(bC_k^*)=0$ for all $b \in G$, here $\E \left( f(BC_k^*) \mid \mathcal{G}_i \right) =0$. A trivial estimate gives $\left| \E \left( f(BC_k) \mid \mathcal{G}_i \right) \right| \le \sup_{b \in G} |\E f(bC_k)|$. Note that $C_k$ has distribution $\nu^{*m}$ with $m=k-\max J_i$. Applying \eqref{maintool} to $g(x)=f(bx)$ we thus obtain $|\E f(bC_k)| \le \Delta_{k-\max J_i}$ for any $b \in G$. Hence
\[ \left| \E \left( (Y_i-Y_i^*)(Y_j-Y_j^*) \mid \mathcal{G}_i \right) \right| \le |Y_i-Y_i^*| \sum_{k\in J_j} \Delta_{k-\max J_i} . \]
By taking the (total) expectation and summing over $j=i+1, i+2, \dots$,
\[ \sum_{j=i+1}^{\infty} \left| \E (Y_i-Y_i^*)(Y_j-Y_j^*) \right| \le \E |Y_i-Y_i^*| \sum_{j=i+1}^{\infty} \sum_{k \in J_j} \Delta_{k-\max J_i} \ll \E |Y_i-Y_i^*|. \]
This estimate together with \eqref{diagonalyiyi*} shows that in \eqref{expandsquareerror} we have
\[ \sum_{i=R+1}^S \E (Y_i-Y_i^*)^2 + 2 \sum_{R+1\le i<j\le S} \E (Y_i-Y_i^*)(Y_j-Y_j^*) \ll \sum_{i=R+1}^S \left( \Delta_{|H_i|} |J_i| +1 \right) \]
for any $1 \le R<S$, as claimed.
\end{proof}

\subsection{Approximation by a Wiener process}

In this section we finish the proof of Theorem \ref{theoremWIENER}, and prove the Remark thereafter.

\begin{proof}[Proof of Theorem \ref{theoremWIENER}]
Fix $f \in \mathcal{F}_p$, and consider the block decomposition $H_1, J_1, H_2, J_2, \dots$ of $\mathbb{N}$ with $|H_i|=\lfloor i^{1/4} \rfloor$ and $|J_i|=\lfloor i^{1/2} \rfloor$. For any $t \ge 1$ let $R(t)$ denote the positive integer for which $\lfloor t \rfloor \in H_{R(t)} \cup J_{R(t)}$; in particular, $R(t) = \Theta (t^{2/3})$. Replacing $t$ by $\max J_{R(t)}$ in the sum $\sum_{k \le t} f(S_k)$, we thus have
\begin{equation}\label{sumfskdecomposition}
\sum_{k \le t} f(S_k) = \sum_{i=1}^{R(t)} \sum_{k \in J_i} f(S_k) + \sum_{i=2}^{R(t)} \sum_{k \in H_i} f(S_k) + O \left( t^{1/3} \right) .
\end{equation}
The first double sum has the same distribution as $\sum_{i=1}^{R(t)} Y_i$, whereas the second double sum has the same distribution as $\sum_{i=2}^{R(t)} Z_i$. Recall that $k\Delta_k \to 0$ as $k \to \infty$; in particular, $\Delta_{|H_i|}|J_i|+1 \ll i^{1/4}$ and $\Delta_{|J_i|}|H_i|+1 \ll 1$.

Lemma \ref{approxlemma} and the Rademacher--Menshov inequality \cite[Theorem F]{MO} give that for any integer $m \ge 1$,
\[ \E \left( \max_{1 \le R \le 2^m} \left| \sum_{i=1}^R (Y_i-Y_i^*) \right| \right)^2 \ll m^2 \sum_{i=1}^{2^m} \left( \Delta_{|H_i|}|J_i|+1 \right) \ll m^2 2^{5m/4} . \]
By the Chebyshev inequality and the Borel--Cantelli lemma, for any $\varepsilon >0$
\[ \max_{1 \le R \le 2^m} \left| \sum_{i=1}^R (Y_i-Y_i^*) \right| \ll 2^{(5/8+\varepsilon)m} \,\, \mathrm{a.s.} \]
Repeating the same arguments for $Z_i-Z_i^*$, we deduce
\begin{equation}\label{yiyi*zizi*almostsure}
\left| \sum_{i=1}^R (Y_i-Y_i^*) \right| \ll R^{5/8+\varepsilon} \,\, \mathrm{a.s.} \quad \mathrm{and} \quad \left| \sum_{i=2}^R (Z_i-Z_i^*) \right| \ll R^{1/2+\varepsilon} \,\, \mathrm{a.s.}
\end{equation}
Recall from Lemma \ref{yi*zi*lemma} that $Z_2^*, Z_3^*, \dots$ are independent, mean zero random variables with $\E (Z_i^*)^2 \ll |H_i| \le i^{1/4}$. In particular, $\sum_{i=1}^{\infty} \E (Z_i^*)^2 / i^{(5/8+\varepsilon)2}< \infty$, and by the strong law of large numbers $\left| \sum_{i=1}^R Z_i^* \right| \ll R^{5/8+\varepsilon}$ a.s.\ for all $\varepsilon >0$. This estimate, together with \eqref{yiyi*zizi*almostsure} show that the processes $\sum_{k \le t} f(S_k)$ and $\sum_{i=1}^{R(t)} Y_i^*$ are $o(R(t)^{5/8+\varepsilon})$-equivalent, i.e.\ $o(t^{5/12+\varepsilon})$-equivalent for all $\varepsilon>0$. It will thus be enough to prove the claims for $\sum_{i=1}^{R(t)} Y_i^*$ instead of $\sum_{k \le t} f(S_k)$.

Recall from Lemma \ref{yi*zi*lemma} that $Y_1^*, Y_2^*, \dots$ are independent, mean zero random variables. We start with the easier case $C(f,\nu)=0$. In this case $\sum_{i=1}^R \E (Y_i^*)^2 = \sum_{i=1}^R o(|J_i|)=o(R^{3/2})$, and $|Y_i^*| \ll |J_i| \le i^{1/2}$ by construction. From a general form of the strong law of large numbers and Kolmogorov's exponential bounds it follows that $\sum_{i=1}^R Y_i^* = o(\sqrt{R^{3/2} \log \log R})$ a.s.; for a detailed proof see the almost sure stability criterion in Lo\`eve \cite[p.\ 270]{LO}. Therefore $\sum_{i=1}^{R(t)} Y_i^* = o(\sqrt{t \log \log t})$ a.s., and consequently $\sum_{k \le t} f(S_k)=o(\sqrt{t \log \log t})$ a.s. Further, we have $\sum_{i=1}^{R(t)} Y_i^*=o(R(t)^{3/4})=o(t^{1/2})$ in $L^2$, and consequently $\sum_{k \le t} f(S_k)=o(t^{1/2})$ in $L^2$. In particular, $t^{-1/2}\sum_{k \le t} f(S_k) \overset{d}{\to} 0$.

Next, assume $C(f,\nu)>0$. It remains to find an almost sure approximation of $\sum_{i=1}^{R(t)} Y_i^*$ by a Wiener process. In fact, we will prove that $\sigma(t)^2 = \sum_{i=1}^{R(t)} \E (Y_i^*)^2$ satisfies the claims of Theorem \ref{theoremWIENER}. First, note that by Lemma \ref{yi*zi*lemma} we have
\[ \sigma(t)^2 = \sum_{i=1}^{R(t)} \left( C(f, \nu ) |J_i| + o(|J_i|) \right) = C(f,\nu ) t +o(t) . \]
Now let $V_R=\sum_{i=1}^R \E (Y_i^*)^2=\Theta (R^{3/2})$, $R=1,2,\dots$, and for any $t \ge V_1$ let $R'(t)$ be the positive integer for which $V_{R'(t)} \le t < V_{R'(t)+1}$. An invariance principle proved by Strassen \cite[Theorem 4.4]{STR} implies that if $\sum_{i=1}^{\infty} \E |Y_i^*|^q / V_i^{\theta q /2} < \infty$ for some $q>2$ and $0 \le \theta \le 1$, then the stochastic processes $\sum_{i=1}^{R'(t)} Y_i^*$ and $W(t)$ are $o(t^{(1+\theta)/4} \log t)$-equivalent, where $W(t)$ is a standard Wiener process. By construction $|Y_i^*| \ll |J_i| \le i^{1/2}$, and so $\E |Y_i^*|^q \ll i^{q/2}$. Therefore for any $\theta>2/3$ there exists a large enough $q>2$ such that
\[ \sum_{i=1}^{\infty} \frac{\E |Y_i^*|^q}{V_i^{\theta q/2}} \ll \sum_{i=1}^{\infty} i^{q(1/2 - 3\theta /4)} < \infty . \]
Hence by the Strassen invariance principle $\sum_{i=1}^{R'(t)} Y_i^*$ and $W(t)$ are $o(t^{5/12+\varepsilon})$-equivalent for all $\varepsilon >0$. Using $\sigma(t)^2 \sim C(f,\nu ) t$, it is not difficult to see that $\sum_{i=1}^{R'(\sigma (t)^2)} Y_i^*$ and $W(\sigma (t)^2)$ are also $o(t^{5/12+\varepsilon})$-equivalent for all $\varepsilon >0$. Since $R'(\sigma (t)^2)=R(t)$ for all $t$, the last relation is exactly what we wanted to prove.
\end{proof}

We now prove the remark made after Theorem \ref{theoremWIENER}. If $\Delta_k \ll k^{-(1+c)}$ for some $0<c \le 1/2$, then in Lemma \ref{variancefusk} the error term $o(N)$ can be replaced by $O(N^{1-c})$. In particular, in Lemma \ref{yi*zi*lemma} we have $\E (Y_i^*)^2=C(f,\nu) |J_i| + O(|J_i|^{1-c})$, and so
\[ \begin{split} \sigma (t)^2 &= \sum_{i=1}^{R(t)} \left( C(f,\nu ) |J_i| + O(|J_i|^{1-c}) \right) \\ &= C(f,\nu ) \sum_{i=1}^{R(t)} (|J_i|+|H_i|) + O\left( R(t)^{5/4} + R(t)^{3/2-c/2} \right) \\ &= C(f,\nu )t + O\left( t^{1-c/3} \right) . \end{split} \]
Finally, we show that in this case $W(\sigma (t)^2)$ and $\sqrt{C(f,\nu)} W(t)$ are $o(t^{1/2-c/6+\varepsilon})$-equivalent for all $\varepsilon >0$. Recalling the distribution of the running maximum of a Wiener process, for an arbitrarily large constant $K>0$ and any $\varepsilon >0$,
\[ \Pr \left( \sup_{s \in [n-Kn^{1-c/3}, n+Kn^{1-c/3}]} |W(s)-W(n)| \ge n^{1/2-c/6+\varepsilon /2} \right) \ll \frac{1}{n^2} . \]
The Borel--Cantelli lemma thus shows that $W(\sigma (t)^2)$ and $W(C(f,\nu) t)$ are both $o(t^{1/2-c/6+\varepsilon})$-equivalent to, say, $W(\lfloor C(f,\nu) t \rfloor)$. By the scaling property of the Wiener process, $W(C(f,\nu) t)$ and $\sqrt{C(f,\nu)} W(t)$ have the same distribution.

\section{Random walks on the torus}\label{torussection}

For any positive integer $H$, let
\[ F_H(x)=\sum_{|h|<H} \left( 1-\frac{|h|}{H} \right) e^{2 \pi i h x} = \frac{1}{H} \cdot \frac{\sin^2 (\pi H x)}{\sin^2 (\pi x)} \]
denote the $1$-dimensional Fej\'er kernel of order $H$, and let $F_H^{(d)}(x_1, x_2, \dots, x_d) = F_H(x_1) F_H(x_2) \cdots F_H(x_d)$ denote its $d$-dimensional counterpart. Recall that given a function $f \in L^1 (\mathbb{R}^d/\mathbb{Z}^d)$, the convolution $f_H=f*F_H^{(d)}$ is a Ces\`aro average of the formal Fourier series of $f$; more precisely, $f_H$ is a trigonometric polynomial whose Fourier coefficients are $\widehat{f_H}(h)=(\prod_{j=1}^d (1-|h_j|/H)) \widehat{f}(h)$ if $\| h \|_{\infty}<H$, and $\widehat{f_H}(h)=0$ otherwise.

Since
\[ \int_0^1 F_H(x)^2 \, \mathrm{d}x = \sum_{|h|<H} \left( 1-\frac{|h|}{H} \right)^2 = \frac{2H^2+1}{3H}, \]
the normalized square of the Fej\'er kernel --- the so-called Jackson kernel --- is
\[ K_H (x) = \frac{3}{2H^3+H} \cdot \frac{\sin^4 (\pi H x)}{\sin^4 (\pi x)} . \]
Note that its Fourier series is of the form $K_H (x)=\sum_{|h|<2H-1} a_h e^{2 \pi i h x}$, where the coefficients $a_h$ are symmetric and unimodal; in particular, $0 \le a_h \le a_0=1$ for all $h$. We also introduce the $d$-dimensional version $K_H^{(d)}(x_1, x_2, \dots, x_d)=K_H (x_1) K_H(x_2) \cdots K_H (x_d)$. Given $f \in L^1 (\mathbb{R}^d / \mathbb{Z}^d)$, the convolution $f_H=f*K_H^{(d)}$ is again a trigonometric polynomial whose Fourier coefficients are $\widehat{f_H}(h)=(\prod_{j=1}^d a_{h_j}) \widehat{f}(h)$ if $\| h \|_{\infty}<2H-1$, and $\widehat{f_H}(h)=0$ otherwise.

\begin{proof}[Proof of Proposition \ref{sigmaequalzero}] We will prove that under the hypotheses of Theorem \ref{theoremCLTLIL} for any $f \in \mathcal{F}_p$,
\begin{equation}\label{cfnualternative}
\sigma^2 = C(f,\nu ) = \sum_{\substack{h \in \mathbb{Z}^d \\ h \neq 0}} |\widehat{f}(h)|^2 \frac{1-|\widehat{\nu}(h)|^2}{|1-\widehat{\nu}(h)|^2} ,
\end{equation}
where $C(f, \nu)$ is as in \eqref{Cfnu}. Here $|\widehat{\nu}(h)|<1$ for every $h \neq 0$. Indeed, by the assumption $W_p(\nu^{*k}, \mu) \to 0$ we have $\nu^{*k} \to \mu$ weakly as $k \to \infty$, and hence
\[ \widehat{\nu}(h)^k = \widehat{\nu^{*k}}(h) = \int_{\mathbb{R}^d / \mathbb{Z}^d} e^{-2 \pi i \langle h,x \rangle} \, \mathrm{d} \nu^{*k}(x) \to \int_{\mathbb{R}^d / \mathbb{Z}^d} e^{-2 \pi i \langle h,x \rangle} \, \mathrm{d} \mu (x) = 0. \]
In particular, \eqref{cfnualternative} implies that $\sigma =0$ if and only if $\widehat{f}(h)=0$ for all $h \in \mathbb{Z}^d$. The latter condition is equivalent to $f=0$ a.e., and by the continuity of $f$, to $f=0$.

We establish \eqref{cfnualternative} in two steps. First, assume that $f$ is a trigonometric polynomial; that is, assume $\widehat{f}(h) \neq 0$ for finitely many $h \in \mathbb{Z}^d$. Since the variable $U$ in the definition \eqref{Cfnu} is independent of $X_1, X_2, \dots$,
\[ \E f(U)f(U+S_k) = \int_{\mathbb{R}^d / \mathbb{Z}^d} \int_{\mathbb{R}^d / \mathbb{Z}^d} f(u)f(u+x) \, \mathrm{d} \mu (u) \mathrm{d} \nu^{*k}(x) = \E g(S_k) , \]
where $g(x)=\int_{\mathbb{R}^d / \mathbb{Z}^d} f(u)f(u+x) \, \mathrm{d} \mu (u)$. Since $\widehat{g}(h)=|\widehat{f}(h)|^2$ and $g$ is also a trigonometric polynomial,
\[ \E g(S_k) = \sum_{h \in \mathbb{Z}^d} \widehat{g}(h) \E e^{2 \pi i \langle h, S_k \rangle} = \sum_{h \in \mathbb{Z}^d} |\widehat{f}(h)|^2 \widehat{\nu}(h)^k . \]
Using $\widehat{f}(0)=0$ and the Parseval formula $\E f(U)^2 = \sum_{h \in \mathbb{Z}^d} |\widehat{f}(h)|^2$, in \eqref{Cfnu} we thus have
\[ C(f,\nu ) = \E f(U)^2 + 2 \sum_{k=1}^{\infty} \E g(S_k) = \sum_{\substack{h \in \mathbb{Z}^d \\ h \neq 0}} |\widehat{f}(h)|^2 \left( 1 + 2 \frac{\widehat{\nu}(h)}{1-\widehat{\nu}(h)} \right) . \]
Combining the $h$ and $-h$ terms, \eqref{cfnualternative} follows.

Finally, let us show \eqref{cfnualternative} for arbitrary $f \in \mathcal{F}_p$. Consider the convolutions $f_H=f*F_H^{(d)}$, $H=1,2,\dots$. By the special case shown above,
\[ C(f_H, \nu ) = \sum_{\substack{h \in \mathbb{Z}^d \\ 0< \| h \|_{\infty} <H}} \prod_{j=1}^d \left( 1 -\frac{|h_j|}{H} \right)^2 |\widehat{f}(h)|^2 \frac{1-|\widehat{\nu}(h)|^2}{|1-\widehat{\nu}(h)|^2} \to \sum_{\substack{h \in \mathbb{Z}^d \\ h \neq 0}} |\widehat{f}(h)|^2 \frac{1-|\widehat{\nu}(h)|^2}{|1-\widehat{\nu}(h)|^2} \]
as $H \to \infty$ (seen e.g.\ from the monotone convergence theorem). Now let $g(x)=\int_{\mathbb{R}^d / \mathbb{Z}^d} f(u)f(u+x) \, \mathrm{d} \mu (u)$ and $g_H(x)=\int_{\mathbb{R}^d / \mathbb{Z}^d} f_H(u)f_H(u+x) \, \mathrm{d} \mu (u)$. Applying \eqref{maintool},
\[ \begin{split} \left| C(f,\nu ) - C(f_H, \nu ) \right| &= \left| \E f(U)^2 + 2 \sum_{k=1}^{\infty} \E g(S_k) - \E f_H(U)^2 - 2 \sum_{k=1}^{\infty} \E g_H(S_k) \right| \\ &\le |\E f(U)^2 - \E f_H(U)^2| + 2 \sum_{k=1}^{\infty} \left| \E (g-g_H)(S_k) \right| \\ &\le \left| \| f \|_2^2 - \| f_H \|_2^2 \right| + 2 \sum_{k=1}^{\infty} \| g-g_H \|_{p-\textrm{H\"old}} W_p (\nu^{*k}, \mu ) \\ & \ll \left| \| f \|_2^2 - \| f_H \|_2^2 \right| + \| g-g_H \|_{p-\textrm{H\"old}} .  \end{split} \]
Since $f_H \to f$ in $L^2(\mathbb{R}^d/\mathbb{Z}^d)$, the first term goes to $0$ as $H \to \infty$. It will thus be enough to show $\| g-g_H \|_{p-\textrm{H\"old}} \to 0$ as $H \to \infty$; indeed, this will imply $C(f_H,\nu) \to C(f,\nu)$, and \eqref{cfnualternative} will follow.

Writing $f=f_H+(f-f_H)$ in the definition of $g$, and applying the integral transformation $u \mapsto u-x$ in one of the terms, we get
\[ g(x)-g_H(x)= \int_{\mathbb{R}^d / \mathbb{Z}^d} (f-f_H)(u) \left( f(u+x) +f_H (u-x) \right) \, \mathrm{d} \mu (u) \]
for all $x \in \mathbb{R}^d / \mathbb{Z}^d$. Therefore $\| g-g_H \|_{p-\textrm{H\"old}} \le \| f-f_H \|_1 \left( \| f \|_{p-\textrm{H\"old}} + \| f_H \|_{p-\textrm{H\"old}} \right)$. Using the fact that $F_H^{(d)}$ is a nonnegative kernel, it is not difficult to see that $\| f_H \|_{p-\textrm{H\"old}} \le \| f \|_{p-\textrm{H\"old}}$. Hence $\| g-g_H \|_{p-\textrm{H\"old}} \le \| f-f_H \|_1 \cdot 2 \| f \|_{p-\textrm{H\"old}} \to 0$ as $H \to \infty$. This finishes the proof of \eqref{cfnualternative} for arbitrary $f \in \mathcal{F}_p$.
\end{proof}

\begin{proof}[Proof of Proposition \ref{ErdosTuran}] Let $f \in \mathcal{F}_1$ be such that $\| f \|_{1-\textrm{H\"old}} \le 1$, and let $H \ge 1$ be an integer. Consider the convolution $f_H=f*K_H^{(d)}$. Clearly,
\begin{equation}\label{kantorovichupperbound}
\begin{split} \bigg| \int_{\mathbb{R}^d/\mathbb{Z}^d} f \, \mathrm{d} \nu_1 &- \int_{\mathbb{R}^d/\mathbb{Z}^d} f \, \mathrm{d} \nu_2 \bigg| \\ &\le 2 \| f-f_H \|_{\infty} + \left| \int_{\mathbb{R}^d/\mathbb{Z}^d} f_H \, \mathrm{d} \nu_1 - \int_{\mathbb{R}^d/\mathbb{Z}^d} f_H \, \mathrm{d} \nu_2 \right| \\ &= 2 \| f-f_H \|_{\infty} + \left| \sum_{h \in \mathbb{Z}^d} \widehat{f_H}(h) \left( \widehat{\nu_1}(-h)-\widehat{\nu_2}(-h) \right) \right| \\ &\le 2 \| f-f_H \|_{\infty} + \sum_{\substack{h \in \mathbb{Z}^d \\ 0<\| h \|_{\infty}<2H-1}} |\widehat{f}(h)| \cdot |\widehat{\nu_1}(h)-\widehat{\nu_2}(h)| . \end{split}
\end{equation}
The Jackson approximation theorem gives an upper bound for $\| f-f_H \|_{\infty}$; for the sake of completeness, we include the short proof. We have
\[ \begin{split} \| f-f_H\|_{\infty} &= \sup_{x \in \mathbb{R}^d / \mathbb{Z}^d} \left| \int_{\mathbb{R}^d/\mathbb{Z}^d} (f(x)-f(x-y))K_H^{(d)}(y) \, \mathrm{d}\mu (y) \right| \\ &\le \int_{[-1/2,1/2)^d} |y| K_H^{(d)}(y) \, \mathrm{d}y \\ &\le \int_{-1/2}^{1/2} \cdots \int_{-1/2}^{1/2} \left( |y_1|+\cdots + |y_d| \right) K_H(y_1) \cdots K_H(y_d) \, \mathrm{d}y_1 \cdots \mathrm{d}y_d \\ &= d \int_{-1/2}^{1/2} |y| K_H(y) \, \mathrm{d}y . \end{split} \]
Note that the denominator of $K_H(y)$ satisfies $\sin^4 (\pi y) \ge 16 y^4$ on $[-1/2,1/2]$. Applying the integral transformation $u=Hy$ and then extending the range of integration,
\begin{equation}\label{f-fHsupbound}
\begin{split} \| f-f_H \|_{\infty} &\le \frac{3d}{2H^3+H} \int_{-1/2}^{1/2} \frac{\sin^4 (\pi H y)}{16|y|^3} \, \mathrm{d}y \\ &\le \frac{3 d}{32H+16/H} \int_{-\infty}^{\infty} \frac{\sin^4 (\pi u)}{|u|^3} \, \mathrm{d}u \\ &= \frac{3 d}{32H+16/H} 2 \pi^2 \log 2. \end{split}
\end{equation}
Next, we estimate the second term in \eqref{kantorovichupperbound}. By the Parseval formula and the assumption that $f$ is Lipschitz, for any $y \in \mathbb{R}^d / \mathbb{Z}^d$ we have
\[ \sum_{h \in \mathbb{Z}^d} |\widehat{f}(h)|^2 \cdot |e^{2 \pi i \langle h,y \rangle} -1|^2 = \int_{\mathbb{R}^d / \mathbb{Z}^d} |f(x+y)-f(x)|^2 \, \mathrm{d} \mu (x) \le \| y \|_{\mathbb{R}^d / \mathbb{Z}^d}^2 . \]
By comparing the asymptotics of the left and the right hand side as $y$ approaches the origin along the $j$th coordinate axis, we get $\sum_{h \in \mathbb{Z}^d} |\widehat{f}(h)|^2 4 \pi^2 h_j^2 \le 1$; summing over $j$, we obtain the general Fourier decay estimate for Lipschitz functions
\[ \sum_{h \in \mathbb{Z}^d} |\widehat{f}(h)|^2 \cdot |h|^2 \le \frac{d}{4 \pi^2} . \]
Applying the Cauchy--Schwarz inequality and this Fourier decay estimate, the second term in \eqref{kantorovichupperbound} satisfies
\[ \sum_{\substack{h \in \mathbb{Z}^d \\ 0<\| h \|_{\infty}<2H-1}} |\widehat{f}(h)| \cdot |\widehat{\nu_1}(h)-\widehat{\nu_2}(h)| \le \frac{d^{1/2}}{2 \pi} \left( \sum_{\substack{h \in \mathbb{Z}^d \\ 0< \| h \|_{\infty}<2H-1}} \frac{|\widehat{\nu_1}(h) - \widehat{\nu_2}(h)|^2}{|h|^2} \right)^{1/2} . \]
The previous line and \eqref{f-fHsupbound} provide an upper bound in \eqref{kantorovichupperbound} which does not depend on $f$; in particular, by Kantorovich duality,
\[ W_1 (\nu_1, \nu_2 ) \le \frac{(3 \pi^2 \log 2) d}{8H+4/H} + \frac{d^{1/2}}{2 \pi} \left( \sum_{\substack{h \in \mathbb{Z}^d \\ 0< \| h \|_{\infty}<2H-1}} \frac{|\widehat{\nu_1}(h) - \widehat{\nu_2}(h)|^2}{|h|^2} \right)^{1/2} . \]
Here $(3 \pi^2 \log 2)/8 \approx 2.5654$, and the claim of the proposition follows.
\end{proof}

\begin{proof}[Proof of Theorem \ref{rankrlattice2}] In this proof constants and implied constants depend only on the distributions of $\xi_1, \xi_2, \dots, \xi_r, I$, the value of the infimum in \eqref{diophantinecond}, $d$ and $r$. We start by estimating $|\widehat{\nu}(h)|$, $h \in \mathbb{Z}^d$. For every $1 \le i \le r$ let $D_i>0$ be the greatest common divisor of the (finite or infinite) set of integers
\[ \{ a-b : \Pr (\xi_i=a), \Pr (\xi_i=b)>0 \} . \]
That is, $\xi_i$ is a lattice variable with maximal span $D_i$. Note that these are well defined because $\xi_1, \xi_2, \dots, \xi_r$ are assumed to be nondegenerate. It is not difficult to see that the characteristic function $\varphi_i$ of $\xi_i$ satisfies
\begin{equation}\label{phiiestimate}
|\varphi_i (2 \pi x)| \le 1-c_i \| D_i x \|_{\mathbb{R}/\mathbb{Z}}^2
\end{equation}
for all $x \in \mathbb{R}$ with some constant $c_i>0$. Indeed, $|\varphi_i (2 \pi x)|=1$ if and only if $x$ is an integer multiple of $1/D_i$; in addition, an estimate of the form \eqref{phiiestimate} holds for all $x$ in an open neighborhood of $0$, see Petrov \cite[p.\ 14]{P}. Since both sides of \eqref{phiiestimate} have period $1/D_i$, \eqref{phiiestimate} follows with a small enough constant $c_i>0$.

Let $e(x)=e^{2 \pi i x}$, and let $D>0$ denote the least common multiple of the integers $D_i$, $1 \le i \le r$. By the construction of $\nu$ and \eqref{phiiestimate},
\begin{equation}\label{hatnuestimate}
\begin{split} |\widehat{\nu}(h)| &= \left| \sum_{i=1}^r \sum_{n \in \mathbb{Z}} \Pr (I=i) \Pr (\xi_i=n) e(-\langle h, n \alpha_i \rangle) \right| \\ &\le \sum_{i=1}^r \Pr (I=i) \left| \sum_{n \in \mathbb{Z}} \Pr (\xi_i=n) e(-\langle h, \alpha_i \rangle n) \right| \\ &= \sum_{i=1}^r \Pr (I=i) \left| \varphi_i (-2 \pi \langle h, \alpha_i \rangle ) \right| \\ &\le 1- \sum_{i=1}^r \Pr (I=i) c_i \| D_i \langle h, \alpha_i \rangle \|_{\mathbb{R}/\mathbb{Z}}^2 \\ &\le 1- \sum_{i=1}^r \Pr (I=i) c_i \frac{D_i}{D} \| D \langle h, \alpha_i \rangle \|_{\mathbb{R}/\mathbb{Z}}^2 \\ &\le 1-c \max_{1 \le i \le r} \| \langle Dh, \alpha_i \rangle \|_{\mathbb{R}/\mathbb{Z}}^2 \end{split}
\end{equation}
with some constant $c>0$. Note that we used the general inequality $\| D_i x \|_{\mathbb{R}/\mathbb{Z}} \ge (D_i/D) \| D x \|_{\mathbb{R}/\mathbb{Z}}$, $x \in \mathbb{R}$, which follows from the subadditivity of the function $\| \cdot \|_{\mathbb{R}/\mathbb{Z}}$, and in the last step the assumption $\Pr (I=i)>0$.

Let us apply Proposition \ref{ErdosTuran} to $\nu_1=\nu^{*k}$ and $\nu_2 = \mu$ with $H=\lfloor \psi^{-1} (\sqrt{k}) /(2D) \rfloor$; in particular, $\psi (2DH)^2 \le k$. By \eqref{hatnuestimate},
\[ \left| \widehat{\nu^{*k}}(h) -\widehat{\mu} (h) \right| = \left| \widehat{\nu}(h) \right|^k \le \exp \left( -ck \max_{1 \le i \le r} \| \langle Dh, \alpha_i \rangle \|_{\mathbb{R}/\mathbb{Z}}^2 \right) \]
for any $h \in \mathbb{Z}^d$, $h \neq 0$. It will thus be enough to prove
\begin{equation}\label{enough}
\sum_{\substack{h \in \mathbb{Z}^d \\ 0<\| h \|_{\infty}<DH}} \frac{\exp \left( -ck \max_{1 \le i \le r} \| \langle h, \alpha_i \rangle \|_{\mathbb{R}/\mathbb{Z}}^2 \right)}{|h|^2} \ll \frac{1}{H^2} .
\end{equation}

For any integer $1 \le m \le DH$, let
\begin{equation}\label{Bmdefinition}
B_m = \sum_{\substack{h \in \mathbb{Z}^d \\ 0<\| h \|_{\infty}<m}} \exp \left( -ck \max_{1 \le i \le r} \| \langle h, \alpha_i \rangle \|_{\mathbb{R}/\mathbb{Z}}^2 \right) .
\end{equation}
Let $C_0=[-K/(2 \psi (2m)), K/(2\psi(2m))]^r$ denote the axis parallel cube in $\mathbb{R}^r$ centered at the origin with edge length $K/\psi (2m)$, where the constant $K>0$ denotes the value of the infimum in \eqref{diophantinecond}. The translates $C_q:=C_0+q K/\psi (2m)$, $q \in \mathbb{Z}^r$ decompose $\mathbb{R}^r$ into congruent axis parallel cubes. Consider the point set
\[ P=\left\{ g (h) \,\, : \,\, h \in \mathbb{Z}^d, \, 0<\| h \|_{\infty} < m \right\}, \]
where $g(h)$ denotes the unique representative of $\left( \langle h, \alpha_1 \rangle, \langle h, \alpha_2 \rangle, \dots, \langle h, \alpha_r \rangle \right)+ \mathbb{Z}^r$ in $[-1/2, 1/2)^r$. Observe that $\| g(h)\|_{\infty}=\max_{1 \le i \le r} \| \langle h, \alpha_i \rangle \|_{\mathbb{R}/\mathbb{Z}}$. We claim that every cube $C_q$ contains at most one point of $P$. Indeed, let $h,h' \in \mathbb{Z}^d$ with $\| h \|_{\infty}, \| h' \|_{\infty} < m$, $h \neq h'$ be arbitrary. By the choice of $K$ and the strict monotonicity of $\psi$,
\[ \begin{split} \| g(h)-g(h') \|_{\infty} \ge \| g(h-h') \|_{\infty} &= \max_{1 \le i \le r} \| \langle h-h', \alpha_i \rangle \|_{\mathbb{R}/\mathbb{Z}} \\ &\ge K/\psi (\| h-h' \|_{\infty}) \\ &> K/\psi (2m) . \end{split} \]
Therefore $g(h)$ and $g(h')$ cannot lie in the same axis parallel cube with edge length $K/\psi (2m)$. Further, since $g(0)=0\in C_0$, no point of $P$ lies in $C_0$.

If $g(h)\in P \cap C_q$ for some $q \in \mathbb{Z}^r$, $q \neq 0$, then
\[ \| g(h) \|_{\infty} \ge \left( \| q \|_{\infty} - \frac{1}{2} \right) \frac{K}{\psi (2m)} \ge \| q \|_{\infty} \frac{K}{2 \psi (2m)} . \]
Therefore with the constant $a=cK^2/4 >0$,
\[ \begin{split} B_m = \sum_{g(h) \in P} \exp \left( -ck \| g(h) \|_{\infty}^2 \right) &\le \sum_{\substack{q \in \mathbb{Z}^r \\ q \neq 0}} \exp \left( -a k \| q \|_{\infty}^2 / \psi (2m)^2 \right) \\ &\ll \sum_{\ell =1}^{\infty} \ell^{r-1} \exp \left( -a k \ell^2 / \psi (2m)^2 \right) , \end{split} \]
where we used the fact that there are $\ll \ell^{r-1}$ lattice points $q \in \mathbb{Z}^r$ with $\| q \|_{\infty} = \ell$. Note that here the $\ell =1$ term dominates. Indeed, $k/\psi (2m)^2 \ge k/\psi (2DH)^2 \ge 1$ yields
\begin{equation}\label{Bmestimate}
\begin{split} B_m &\ll \exp \left( -ak/\psi (2m)^2 \right) \sum_{\ell =1}^{\infty} \ell^{r-1} \exp \left( -ak (\ell^2 -1)/\psi (2m)^2 \right) \\ &\le \exp \left( -ak/\psi (2m)^2 \right) \sum_{\ell =1}^{\infty} \ell^{r-1} \exp \left( -a (\ell^2 -1) \right) \\ &\ll \exp \left( -ak/\psi (2m)^2 \right) . \end{split}
\end{equation}

We prove \eqref{enough} using ``multidimensional summation by parts''. Formally, by the definition \eqref{Bmdefinition} of $B_m$, \eqref{Bmestimate} and $|h| \ge \| h \|_{\infty}$,
\begin{equation}\label{lastestimate}
\begin{split} \sum_{\substack{h \in \mathbb{Z}^d \\ 0<\| h \|_{\infty}<DH}} &\frac{\exp \left( -ck \max_{1 \le i \le r} \| \langle h, \alpha_i \rangle \|_{\mathbb{R}/\mathbb{Z}}^2 \right)}{|h|^2} \\ &\le \sum_{m=1}^{DH-1} \frac{1}{m^2} \sum_{\substack{h \in \mathbb{Z}^d \\ \| h \|_{\infty}=m}} \exp \left( -ck \max_{1 \le i \le r} \| \langle h, \alpha_i \rangle \|_{\mathbb{R}/\mathbb{Z}}^2 \right) \\ &= \sum_{m=1}^{DH-1} \frac{1}{m^2} (B_{m+1}-B_m) \\ &= \sum_{m=2}^{DH} \left( \frac{1}{m^2} - \frac{1}{(m+1)^2} \right) B_m + \frac{B_{DH}}{(DH+1)^2} \\ &\ll \sum_{m=2}^{DH} \frac{\exp \left( -ak/\psi (2m)^2 \right)}{m^3} + \frac{1}{H^2} . \end{split}
\end{equation}
Here
\[ \frac{k}{\psi (2m)^2} \ge \frac{\psi (2DH)^2}{\psi (2m)^2} \ge \left( \frac{DH}{m} \right)^{2d/r} . \]
Since the function $\exp \left( -a x^{2d/r} \right)x^3$ is bounded on $[1,\infty)$,
\[ \frac{\exp \left( -ak/\psi (2m)^2 \right)}{m^3} \le \frac{\exp \left( -a (DH/m)^{2d/r} \right)}{m^3} \ll \frac{1}{H^3} \]
for all $2 \le m \le DH$. Applying this estimate to each term in the last sum in \eqref{lastestimate}, the estimate \eqref{enough} follows.
\end{proof}

Finally, we prove Theorem \ref{rankrlattice3}. We start with two auxiliary lemmas. A finite set of points $A \subset \mathbb{R}^d / \mathbb{Z}^d$ is called an $R$-net, if the set of closed balls with center in $A$ and radius $R$ (in the Euclidean metric) cover all of $\mathbb{R}^d / \mathbb{Z}^d$.
\begin{lem}\label{wassersteinlowerbound} Let $\vartheta$ be a Borel probability measure on $\mathbb{R}^d / \mathbb{Z}^d$, and let $0<p \le 1$. If $A$ is a finite $R$-net of cardinality $|A|$, then
\[ W_p (\vartheta , \mu ) \ge \frac{d}{d+p} (\omega_d |A|)^{-p/d} - R^p (1-\vartheta (A)), \]
where $\omega_d$ is the volume of the unit ball in $\mathbb{R}^d$.
\end{lem}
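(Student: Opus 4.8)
The plan is to fix an arbitrary coupling of $\vartheta$ and $\mu$, realized by random variables $(X,Y)$ with $X \sim \vartheta$ and $Y \sim \mu$, and to bound its transport cost $\E\, d(X,Y)^p$ from below using only the mass of $\vartheta$ carried by the finite set $A$. For each $a \in A$ I would introduce the sub-probability measure $\mu_a$ on $\mathbb{R}^d/\mathbb{Z}^d$ given by $\mu_a(B) = \Pr(X = a, \ Y \in B)$; then $\mu_a \le \mu$, and $\mu_a$ has total mass $w_a := \vartheta(\{a\})$, with $\sum_{a \in A} w_a = \vartheta(A)$. Discarding the nonnegative contribution of the event $\{X \notin A\}$ gives
\[ \E\, d(X,Y)^p \ \ge\ \sum_{a \in A} \int_{\mathbb{R}^d/\mathbb{Z}^d} d(a,y)^p \, \mathrm{d}\mu_a(y). \]

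The first step is to bound each inner integral. Because $\mu_a \le \mu$ and the quotient map $\mathbb{R}^d \to \mathbb{R}^d/\mathbb{Z}^d$ does not increase Lebesgue measure, the torus ball $B(a,s)$ satisfies $\mu(B(a,s)) \le \omega_d s^d$ for every $s > 0$, with no restriction on $s$. Hence $\mu_a(\{y : d(a,y) > s\}) \ge (w_a - \omega_d s^d)_+$, and the layer-cake formula yields
\[ \int d(a,y)^p \, \mathrm{d}\mu_a(y) = \int_0^\infty \mu_a\big(\{ d(a,y) > t^{1/p} \}\big) \, \mathrm{d}t \ \ge\ \int_0^\infty \big( w_a - \omega_d t^{d/p} \big)_+ \, \mathrm{d}t = \frac{d}{d+p}\, \omega_d^{-p/d}\, w_a^{(d+p)/d}. \]
Summing over $a \in A$ and invoking convexity of $t \mapsto t^{(d+p)/d}$ --- equivalently the power-mean inequality, which applies since $(d+p)/d > 1$ --- to get $\sum_{a \in A} w_a^{(d+p)/d} \ge |A|^{-p/d}\, \vartheta(A)^{(d+p)/d}$, I would obtain
\[ W_p(\vartheta, \mu) \ \ge\ \frac{d}{d+p}\, (\omega_d |A|)^{-p/d}\, \vartheta(A)^{(d+p)/d}. \]

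It then remains to convert this into the stated estimate. I would linearize the factor $\vartheta(A)^{(d+p)/d}$ via the elementary inequality $t^q \ge 1 - q(1-t)$, valid for $t \in [0,1]$ and $q \ge 1$, applied with $q = (d+p)/d$; since $\tfrac{d}{d+p}\,q = 1$ this turns the right-hand side into $\tfrac{d}{d+p}(\omega_d|A|)^{-p/d} - (\omega_d|A|)^{-p/d}(1-\vartheta(A))$. Finally, the $R$-net hypothesis says that the $|A|$ balls of radius $R$ centered at the points of $A$ cover the torus, so $1 \le |A|\,\omega_d R^d$ (once more by $\mu(B(a,R)) \le \omega_d R^d$), that is, $(\omega_d|A|)^{-p/d} \le R^p$. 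Plugging this in gives exactly $W_p(\vartheta,\mu) \ge \tfrac{d}{d+p}(\omega_d|A|)^{-p/d} - R^p(1-\vartheta(A))$; since the coupling was arbitrary, the lemma follows.

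The one-variable calculus behind the layer-cake integral, the Bernoulli-type inequality, and the power-mean step are all routine. The point that needs a little care --- and the reason the clean constant $\tfrac{d}{d+p}$ appears with no case distinction between "small" and "wrapping" balls --- is the uniform bound $\mu(B(a,s)) \le \omega_d s^d$ for the measure of a ball on the torus, which holds for all $s$ because projection does not enlarge Lebesgue measure. It is also pleasant to note that the $R$-net condition is used exactly once, and precisely to make $(\omega_d|A|)^{-p/d}$ absorb into the error term $R^p(1-\vartheta(A))$.
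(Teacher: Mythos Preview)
Your proof is correct, but it takes a different route from the paper. The paper argues via Kantorovich duality with the explicit test function $f(x)=\mathrm{dist}(x,A)^p$: the layer-cake bound $\mu(\{\mathrm{dist}(\cdot,A)\le t\})\le |A|\,\omega_d t^d$ gives $\int f\,\mathrm{d}\mu\ge\tfrac{d}{d+p}(\omega_d|A|)^{-p/d}$ directly, while the $R$-net hypothesis gives $\int f\,\mathrm{d}\vartheta\le R^p(1-\vartheta(A))$, and the lemma drops out in two lines. You instead work on the primal side, decomposing an arbitrary coupling over the atoms $a\in A$ and bounding each $\int d(a,y)^p\,\mathrm{d}\mu_a(y)$ separately; this yields the intermediate inequality $W_p(\vartheta,\mu)\ge\tfrac{d}{d+p}(\omega_d|A|)^{-p/d}\,\vartheta(A)^{(d+p)/d}$, which makes no reference to $R$ and may be of independent interest. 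The price is the extra Jensen and Bernoulli-type steps needed to recover the stated form. Both arguments rest on the same volume bound $\mu(B(a,s))\le\omega_d s^d$; the paper's is shorter, while yours isolates more cleanly where each hypothesis enters.
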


\begin{proof} By Kantorovich duality, it will be enough to find a function $f: \mathbb{R}^d / \mathbb{Z}^d \to \mathbb{R}$ with $\| f \|_{p-\textrm{H\"old}} \le 1$, for which
\begin{equation}\label{wasslower}
\int_{\mathbb{R}^d / \mathbb{Z}^d} f \, \mathrm{d} \mu - \int_{\mathbb{R}^d / \mathbb{Z}^d} f \, \mathrm{d} \vartheta \ge \frac{d}{d+p} (\omega_d |A|)^{-p/d} - R^p (1-\vartheta (A)) .
\end{equation}
We claim that $f(x)=\mathrm{dist}(x,A)^p$ satisfies \eqref{wasslower}, where $\mathrm{dist}(x,A)$ denotes the Euclidean distance from the point $x$ to the set $A$. By a trivial estimate, for any $t \ge 0$ we have
\[ \mu \left( \left\{ x \in \mathbb{R}^d / \mathbb{Z}^d : \mathrm{dist} (x,A) \le t  \right\} \right) \le |A| \omega_d t^d, \]
and hence
\[ \begin{split} \int_{\mathbb{R}^d / \mathbb{Z}^d} \mathrm{dist}(x,A)^p \, \mathrm{d} \mu (x) &= \int_0^{\infty} \mu \left( \left\{ x \in \mathbb{R}^d / \mathbb{Z}^d : \mathrm{dist} (x,A)^p > t \right\} \right) \, \mathrm{d}t \\ &\ge \int_{0}^{(\omega_d |A|)^{-p/d}} \left( 1-|A| \omega_d t^{d/p} \right) \, \mathrm{d}t \\&= \frac{d}{d+p} (\omega_d |A|)^{-p/d} . \end{split} \]
On the other hand,
\[ \int_{\mathbb{R}^d / \mathbb{Z}^d} \mathrm{dist} (x,A)^p \, \mathrm{d} \vartheta (x) \le R^p (1-\vartheta (A)), \]
because the integrand $\mathrm{dist} (x,A)^p$ is zero on the set $A$, and at most $R^p$ everywhere by the fact that $A$ is an $R$-net. The last two relations together imply \eqref{wasslower}, and we are done.
\end{proof}

\begin{lem}\label{Rnetlemma} Assume that the system of vectors $\alpha_1, \alpha_2, \dots, \alpha_r \in \mathbb{R}^d$ is badly approximable, and let $J_1, J_2, \dots, J_r \subset \mathbb{Z}$ be finite intervals of integers of size $|J_i| \ge L \ge 1$ ($1 \le i \le r$). Then the set
\[ A=\left\{ \sum_{i=1}^r n_i \alpha_i + \mathbb{Z}^d : n_i \in J_i \textrm{ for all } 1 \le i \le r \right\} \subset \mathbb{R}^d / \mathbb{Z}^d \]
is an $R$-net for some $R \ll L^{-r/d}$, with an implied constant depending only on $\alpha_1, \alpha_2, \dots, \alpha_r$.
\end{lem}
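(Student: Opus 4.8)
The plan is to identify the $R$-net property with an upper bound for the covering radius (inhomogeneous minimum) of a $(d+r)$-dimensional unimodular lattice, and to deduce that bound from the bad approximability hypothesis \eqref{diophantinecond} by a transference argument from the geometry of numbers. First a harmless reduction: since the $R$-net property is translation invariant and enlarging the intervals $J_i$ only helps, after translating the target I may assume $J_i\supseteq\{n\in\mathbb{Z}:|n|<L\}$, and since for bounded $L$ the claim is trivial (the diameter of $\mathbb{R}^d/\mathbb{Z}^d$ is a constant, which is $\ll L^{-r/d}$ there), I may also assume $L$ exceeds any fixed constant. It then suffices to prove: for every $\beta\in\mathbb{R}^d$ there are integers $n_1,\dots,n_r$ with $|n_i|<L$ and $\|\sum_{i=1}^r n_i\alpha_i-\beta\|_{\mathbb{R}^d/\mathbb{Z}^d}\ll L^{-r/d}$.

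Next I would introduce the unimodular suspension lattice $\Lambda=\{(\sum_i n_i\alpha_i-m,\,n_1,\dots,n_r):m\in\mathbb{Z}^d,\,n\in\mathbb{Z}^r\}\subset\mathbb{R}^{d+r}$, with dual lattice $\Lambda^*=\{(h,\,\ell-Mh):h\in\mathbb{Z}^d,\,\ell\in\mathbb{Z}^r\}$, where $M$ is the $r\times d$ matrix with rows $\alpha_1,\dots,\alpha_r$, so that $(Mh)_i=\langle h,\alpha_i\rangle$. For a parameter $R>0$ let $K\subset\mathbb{R}^{d+r}$ be the symmetric box of half-width $R$ in the first $d$ coordinates and half-width $L$ in the last $r$. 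Applying the covering-radius statement $\mu(\Lambda,K)<1$ to the point $(\beta,0)$ produces exactly a lattice vector $(\sum n_i\alpha_i-m,n)$ with $|n_i|<L$ and $\|\sum n_i\alpha_i-m-\beta\|_\infty<R$; passing from $\ell^\infty$ to the Euclidean distance costs only a factor $\sqrt d$. So the task reduces to choosing $R$ a suitable constant multiple of $L^{-r/d}$ with $\mu(\Lambda,K)<1$.

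The crux is a lower bound for the first successive minimum $\lambda_1(\Lambda^*,K^\circ)$ with respect to the polar body $K^\circ=\{(\xi,\eta):R\|\xi\|_1+L\|\eta\|_1\le1\}$. For $w=(h,\ell-Mh)\in\Lambda^*$, minimizing over $\ell\in\mathbb{Z}^r$ gives $\|w\|_{K^\circ}\ge R\|h\|_\infty+L\max_{1\le i\le r}\|\langle h,\alpha_i\rangle\|_{\mathbb{R}/\mathbb{Z}}$, which is $\ge L$ when $h=0$, while for $h\neq0$ the hypothesis \eqref{diophantinecond} with $\psi(x)=x^{d/r}$ bounds it below by $R\|h\|_\infty+c_0 L\|h\|_\infty^{-d/r}$, where $c_0>0$ is the infimum in \eqref{diophantinecond} (which I may take $\le1$). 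Minimizing $Rt+c_0 L t^{-d/r}$ over real $t\ge1$ — the two terms balance near $t\asymp(c_0L/R)^{r/(d+r)}\ge1$ since $L$ is large and $R$ small — yields $\lambda_1(\Lambda^*,K^\circ)\ge(c_0^r R^d L^r)^{1/(d+r)}$. Combining the standard bound $\mu(\Lambda,K)\le\frac12\sum_{i=1}^{d+r}\lambda_i(\Lambda,K)$ for symmetric convex bodies with the transference inequality $\lambda_{d+r}(\Lambda,K)\,\lambda_1(\Lambda^*,K^\circ)\ll_{d,r}1$ (Mahler's transference inequality) then gives $\mu(\Lambda,K)\ll_{d,r}(c_0^r R^d L^r)^{-1/(d+r)}$, so taking $R=C(d,r,c_0)L^{-r/d}$ with $C$ large enough forces $\mu(\Lambda,K)<1$. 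The resulting $R$ is a constant times $L^{-r/d}$ with the constant depending only on $\alpha_1,\dots,\alpha_r$, exactly as claimed.

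The main obstacle is this last step: keeping the dependence on the bad approximability constant $c_0$ under control through the successive-minima estimate and the transference inequality, so that the final radius genuinely is $O(L^{-r/d})$, and handling cleanly the elementary but fiddly norm bookkeeping (the shape of the polar box $K^\circ$ and the $\ell^1$/$\ell^\infty$/Euclidean conversions). Everything else is a routine reduction or an appeal to standard facts from the geometry of numbers (see Cassels \cite{CA}); if one prefers to avoid invoking Mahler's transference inequality, the required inhomogeneous transference can instead be obtained directly from Minkowski's convex body theorem applied to $\Lambda$ at each scale $L$, at the price of a longer argument.
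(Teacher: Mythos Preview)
Your argument is correct and is, at bottom, the same transference principle the paper uses; the difference is packaging. The paper simply quotes the classical equivalence from Cassels \cite[p.~84]{CA}: the system $\alpha_1,\dots,\alpha_r$ is badly approximable if and only if there is a constant $c>0$ such that for every $N\ge1$ and every $x\in\mathbb{R}^d$ one can solve $\|\sum_i n_i\alpha_i-x\|_{\mathbb{R}^d/\mathbb{Z}^d}\le cN^{-r/d}$ with $|n_i|\le N$. Applying this with $N=(L-1)/2$ and translating gives the $R$-net conclusion in two lines. You instead rebuild this transference from scratch via the suspension lattice $\Lambda$, the polar box $K^\circ$, the lower bound on $\lambda_1(\Lambda^*,K^\circ)$ from \eqref{diophantinecond}, and Mahler's inequality $\lambda_{d+r}(\Lambda,K)\lambda_1(\Lambda^*,K^\circ)\ll_{d,r}1$ together with $\mu\le\frac12\sum_i\lambda_i$. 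Your route is more self-contained and makes the mechanism visible; the paper's is shorter because the hard work is hidden in the citation.

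One small slip in your reduction: from $|J_i|\ge L$ you cannot arrange $J_i\supseteq\{n:|n|<L\}$ by translation, only $J_i\supseteq\{n:|n|\le\lfloor(L-1)/2\rfloor\}$ (the former set has about $2L$ integers). This is harmless---just use half-width $L/2$ in the last $r$ coordinates of your box $K$, which changes only constants---but worth stating correctly.
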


\begin{proof} We may assume that $L \ge 3$ is an odd integer. According to a classical transference principle in the theory of Diophantine approximation \cite[p.\ 84]{CA}, a system of vectors $\alpha_1, \alpha_2, \dots, \alpha_r \in \mathbb{R}^d$ is badly approximable if and only if there exists a constant $c>0$ such that for any integer $N \ge 1$ and any point $x \in \mathbb{R}^d$ the system of inequalities
\[ \begin{split} \left\| \sum_{i=1}^r n_i \alpha_i -x \right\|_{\mathbb{R}^d/\mathbb{Z}^d} &\le c N^{-r/d}, \\ |n_1|, |n_2|, \dots, |n_r| &\le N \end{split} \]
has an integer solution $n_1, n_2, \dots, n_r$. In particular, the set
\[ \left\{ \sum_{i=1}^r n_i \alpha_i + \mathbb{Z}^d : n_i \in \mathbb{Z}, |n_i| \le \frac{L-1}{2} \textrm{ for all } 1 \le i \le r \right\} \]
is an $R$-net with $R=c((L-1)/2)^{-r/d} \ll L^{-r/d}$; being a translate of this set, so is $A$.
\end{proof}

\begin{proof}[Proof of Theorem \ref{rankrlattice3}] In this proof constants and implied constants depend only on $\alpha_1, \alpha_2, \dots, \alpha_r$ and $B$. Consider a sequence of i.i.d.\ $\mathbb{Z}^r$-valued random variables $V_1, V_2, \dots$, where $V_1=\xi_I e_I$; here $e_1, e_2, \dots, e_r$ denote the standard basis vectors in $\mathbb{Z}^r$. Observe that the $i$th coordinate of $V_1$ has expected value $E_i:=\Pr (I=i) \E \xi_i$, and variance
\[ \Pr (I=i) \E \xi_i^2 - \Pr (I=i)^2 (\E \xi_i )^2 \le \Pr (I=i) \E \xi_i^2 \le B. \]
Applying the Chebyshev inequality in each coordinate, for any $\lambda >0$,
\[ \Pr \left( \sum_{j=1}^k V_j \not\in \prod_{i=1}^r \left[ E_i k - \lambda \sqrt{k}, E_i k + \lambda \sqrt{k} \right]  \right) \le r \frac{B}{\lambda^2} \ll \frac{1}{\lambda^2}. \]
The random walk $\sum_{j=1}^k V_j$ is mapped to our random walk $S_k=\sum_{j=1}^k X_j$ by the function $g: \mathbb{Z}^r \to \mathbb{R}^d/\mathbb{Z}^d$, $g(n_1, n_2, \dots, n_r) = \sum_{i=1}^r n_i \alpha_i + \mathbb{Z}^d$. Therefore the set
\[ A:= \left\{ \sum_{i=1}^r n_i \alpha_i + \mathbb{Z}^d : n_i \in \left[ E_i k -\lambda \sqrt{k}, E_i k +\lambda \sqrt{k} \right] \cap \mathbb{Z} \textrm{ for all } 1 \le i \le r \right\} \]
satisfies $\Pr (S_k \not\in A) \ll 1/\lambda^{2}$ as well; in other words, $1-\nu^{*k}(A) \ll 1/\lambda^2$. On the other hand, by Lemma \ref{Rnetlemma} the set $A$ is an $R$-net with some $R \ll \lambda^{-r/d} k^{-r/(2d)}$ of cardinality $|A| \ll \lambda^{r}k^{r/2}$. Choosing $\lambda>0$ to be a large enough constant, Lemma \ref{wassersteinlowerbound} thus gives
\[ W_p (\nu^{*k}, \mu)^{1/p} \ge \left( \frac{d}{d+p} (\omega_d |A|)^{-p/d} - R^p (1-\nu^{*k} (A)) \right)^{1/p} \gg k^{-r/(2d)}, \]
as claimed.
\end{proof}

\subsection*{Acknowledgments}

The author is supported by the Austrian Science Fund (FWF), project Y-901.

\end{document}